\numberwithin{equation}{section}
\title{}
\author{}
\begin{document}

\newtheorem{lem}{Lemma}[section]
\newtheorem{prop}[lem]{Proposition}
\newtheorem{cor}[lem]{Corollary}
\newtheorem{thm}[lem]{Theorem}
\newtheorem{rmk}[lem]{Remark}
\theoremstyle{definition}
\newtheorem{deff}[lem]{Definition}

\def\mcE{\mathcal{E}}
\def\mcF{\mathcal{F}}
\def\tmcE{\tilde{\mathcal{E}}}
\def\omcE{\overline{\mathcal{E}}}
\def\tmcF{\tilde{\mathcal{F}}} 
\def\red{\color{red}}
\def\omcE{\overline{\mathcal{E}}}
\def\R{\mathbb{R}}
\def\Z{\mathbb{Z}}
\def\tF{\tilde{F}}
\def\Tr{\mathrm{Tr}}
\def\mcH{\mathcal{H}}
\def\V{\mathbb{V}}
\def\res{\mathrm{res}}
\def\osc{\mathrm{Osc}}

\title[A non-canonical diffusion on the Sierpi\'nski carpet]{A non-canonical diffusion on the Sierpi\'nski carpet}

\author{Shiping Cao}
\address{Department of Mathematics, The Chinese University of Hong Kong, Shatin, Hong Kong}
\email{spcao@math.cuhk.edu.hk}
\thanks{}

\author{Hua Qiu}
\address{School of Mathematics, Nanjing University, Nanjing, 210093, P. R. China.}
\thanks{The research of Qiu was supported by the National Natural Science Foundation of China, grant 12471087 and 12531004.}
\email{huaqiu@nju.edu.cn}

\author{Bingshen Wang}
\address{School of Mathematics, Nanjing University, Nanjing, 210093, P. R. China.} 
\email{652022210009@smail.nju.edu.cn}

\subjclass[2010]{Primary 28A80, 31E05}

\date{}

\keywords{the Sierpi\'nski carpet, sub-Gaussian heat kernel estimates, Knight move,  resistance estimate}

\maketitle

\begin{abstract}
We constructed a diffusion process on the Sierpi\'nski carpet that satisfies the sub-Gaussian heat kernel estimate with respect to the Euclidean metric and a non-standard self-similar measure. 
\end{abstract}

\section{Introduction}
The construction of Brownian motions on the Sierpi\'nski gasket \cite{BPgasket,Goldstein,Kusuoka} and on generalized Sierpi\'nski carpets \cite{barlow1989construction} marked the beginning of analysis on fractals. Since then, the properties of such processes have been extensively investigated, and various extensions have been developed. 

\begin{figure}[htp]
	\includegraphics[width=4.40cm]{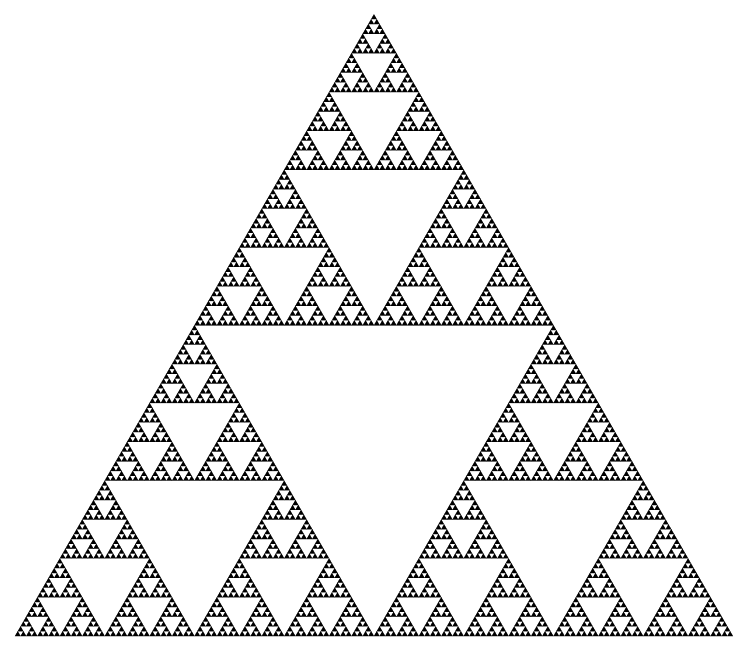}\qquad 
	\includegraphics[width=3.9cm]{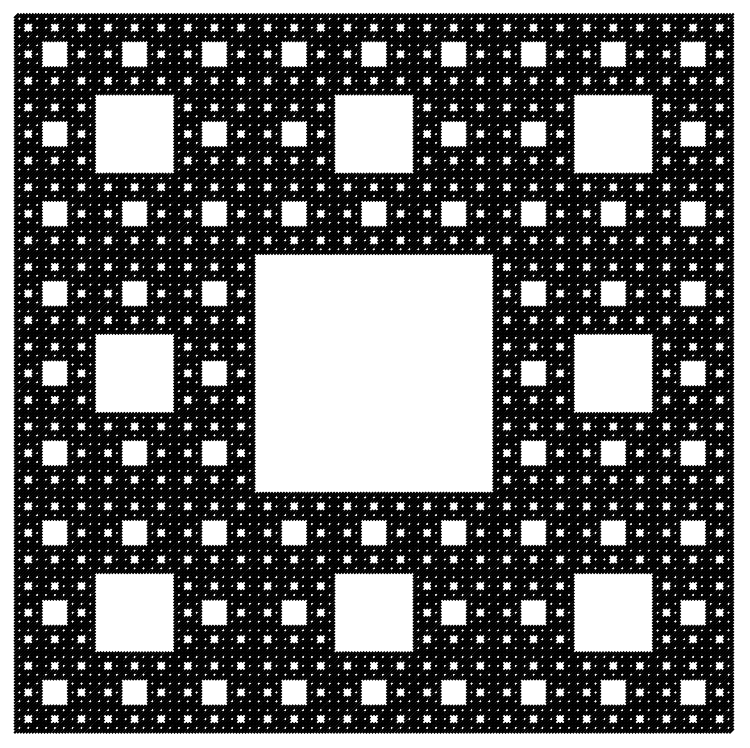}
	\caption{The standard Sierpi\'nski gasket and carpet.}
\end{figure}

In 1993, Kigami \cite{Kigamipcf} introduced the class of post-critically finite (p.c.f.) self-similar sets---a natural generalization of the Sierpi\'nski gasket---and established a comprehensive framework for constructing self-similar Dirichlet forms on such sets (see also his monograph \cite{Kigamipcf}). It is worth noting that Dirichlet forms and diffusion processes are essentially equivalent: every regular Dirichlet form corresponds to a Hunt process, and vice versa. 

A notable feature of Kigami's framework on p.c.f. self-similar sets is the flexibility to assign different weights to cells when defining the self-similar structure. For instance, Sabot \cite[Example 1.2]{Sabot} characterized the set of admissible weights on the Sierpi\'nski gasket that yield a self-similar Dirichlet form. Building on this, Hambly and Kumagai \cite{HamblyKumagai} derived heat kernel estimates for the associated Dirichlet forms on p.c.f.  sets under suitably chosen metrics and measures compatible with the weights.\smallskip 

In contrast to p.c.f. sets, generalized Sierpi\'nski carpets lack local cut points, and have seen fewer generalizations. In 1992, Kusuoka and Zhou \cite{KusuokaZhou} proposed a framework for constructing self-similar Dirichlet forms on certain fractals and  proved the existence of such a form on a generalized Sierpi\'nski carpet using the Knight move argument of Barlow and Bass \cite{barlow1989construction}. In 1999, Barlow and Bass \cite{BB19993d} extended this construction to higher-dimensional generalized Sierpi\'nski carpets.  More than two decades later,  two of the authors \cite{CaoQiu} introduced unconstrained Sierpi\'nski carpets---extension of generalized Sierpi\'nski carpets that allow cells to exist off-grids---and, using the cell graph approximation framework of Kusuoka and Zhou \cite{KusuokaZhou}, constructed self-similar Dirichlet forms on them. This approach was later extended to certain hollow carpets \cite{CQW}. Inspired by the ideas of Kusuoka and Zhou \cite{KusuokaZhou}, Kigami \cite{Kigamiphomogenity}, together with  Kigami and Ota \cite{KO},  also constructed $p$-energies on polygon-based self-similar sets.

A key property of diffusion processes on Sierpi\'nski carpet like fractals is the sub-Gaussian heat kernel estimate
\begin{equation*}
	\begin{split} 
		C_1\frac{1}{|x-y|^{\alpha/\beta}}\exp(-C_2(\frac{|x-y|^{\beta}}{t})^{\frac{1}{\beta-1}})&\leq p_t(x,y)\\&\leq C_3\frac{1}{|x-y|^{\alpha/\beta}}\exp(-C_4(\frac{|x-y|^{\beta}}{t})^{\frac{1}{\beta-1}}),
	\end{split} 
\end{equation*} 
 established by Barlow and Bass \cite{BB1992SCHKestimate,BB19993d} for generalized Sierpi\'nski carpets. Here, $\alpha$ is the Hausdorff dimension of the fractal and $\beta$ is the walk dimension. 

A natural question is whether one can assign different weights to cells and define other diffusion processes on the Sierpi\'nski carpet that also satisfy  sub-Gaussian heat kernel estimates, as is possible on many p.c.f. self-similar sets.   It is worth emphasizing that previous constructions of Dirichlet forms on carpet-type fractals all assumed local symmetry. In contrast, the present work does not rely on such symmetry. 

In this paper, we address a simple case of the question. We consider a (non-standard) self-similar measure $\mu$ on the standard Sierpi\'nski carpet that possesses full symmetry, and construct a diffusion process that satisfies the corresponding sub-Gaussian heat kernel estimates. See Section \ref{section2} and Figure \ref{fig2} for the precise definition of $\mu$.

\begin{figure}[htp]
	\includegraphics[width=3.9cm]{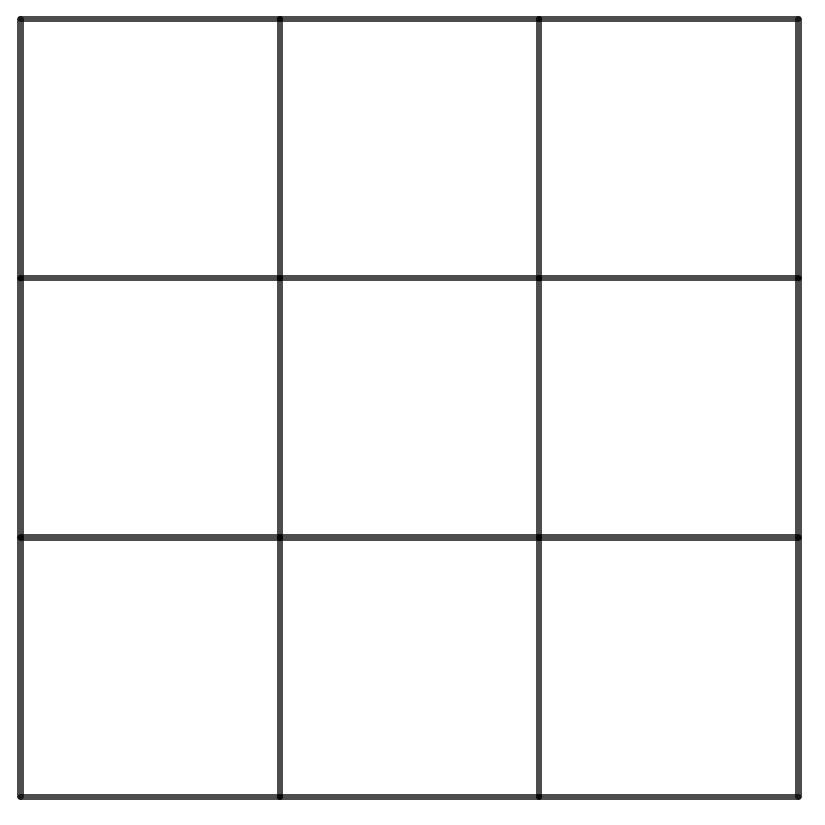}
		\caption{Self-similar weight of $\mu$ with $\rho>0$.}
		\begin{picture}(0,0)
		\put(-46,57){$\frac{1}{4+4\rho}$}\put(-11,57){$\frac{\rho}{4+4\rho}$}\put(24,57){$\frac{1}{4+4\rho}$}\put(-46,126){$\frac{1}{4+4\rho}$}\put(-11,126){$\frac{\rho}{4+4\rho}$}\put(24,126){$\frac{1}{4+4\rho}$}
		\put(-46,91){$\frac{\rho}{4+4\rho}$}\put(24,91){$\frac{\rho}{4+4\rho}$}
	\end{picture}
	\label{fig2}
\end{figure}

\begin{thm}\label{thm1}
There is a Feller process $((X_t)_{t>0},(P^x)_{x\in F})$ on the Sierpi\'nski carpet $F$ with a transition density $q_t(x,y)$ satisfying the two-sided sub-Gaussian heat kernel estimate 
\begin{equation}\label{e:1.1}
	\begin{split} 
		C_1\frac{1}{\mu(B(x,t^{1/\beta}))}\exp(-C_2(\frac{|x-y|^{\beta}}{t})^{\frac{1}{\beta-1}})&\leq q_t(x,y)\\&\leq C_3\frac{1}{\mu(B(x,t^{1/\beta}))}\exp(-C_4(\frac{|x-y|^{\beta}}{t})^{\frac{1}{\beta-1}})
	\end{split} 
\end{equation}
for all $x,y\in F$ and $t\leq 1$. 
\end{thm}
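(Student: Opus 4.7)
The plan is to follow the Kusuoka--Zhou \cite{KusuokaZhou} and Barlow--Bass \cite{barlow1989construction,BB1992SCHKestimate} cell graph approximation scheme, adapted to the weighted self-similar measure $\mu$. For each $n\geq 0$ let $G_n$ be the graph whose vertices are the $8^n$ level-$n$ cells of $F$ and whose edges connect adjacent cells, with conductances chosen to respect the weights $w_i\in\{\tfrac{1}{4+4\rho},\tfrac{\rho}{4+4\rho}\}$ shown in Figure~\ref{fig2}. The aim is to produce a sequence $(\mathcal{E}_n)$ of $D_4$-symmetric Dirichlet forms on $G_n$ that are self-similar in the sense that the trace of $\mathcal{E}_n$ to each level-1 subcell equals, up to a uniform time-rescaling factor $\lambda$, a copy of $\mathcal{E}_{n-1}$ weighted by the corresponding $w_i$.

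The first step is to produce such a fixed point. One defines a renormalization map on the cone of $D_4$-symmetric trace Dirichlet forms on the outer boundary of a cell, and shows via a compactness-plus-Knight-move argument that it admits a fixed point with a strictly positive resistance scaling ratio $r>1$. Combined with the time rescaling $\lambda$, this yields a self-similar regular Dirichlet form $(\mathcal{E},\mathcal{F})$ on $L^2(F,\mu)$ and, via the standard Dirichlet form/Hunt process correspondence, a Feller process $(X_t)$ on $F$. Setting $\beta:=\log\lambda/\log 3$ then identifies the walk dimension that appears in \eqref{e:1.1}.

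With the self-similar form in hand, the sub-Gaussian estimate \eqref{e:1.1} follows from the now-standard characterization in terms of: (i) volume doubling and polynomial volume growth of $\mu$, which hold automatically since $\mu$ is self-similar and $D_4$-symmetric; (ii) a scale-invariant Poincar\'e inequality on metric balls, obtained by chaining the form-level bound on level-$n$ cells; and (iii) a matching two-sided effective resistance estimate across annuli, which reduces via self-similarity to the Knight-move estimate of the previous step. Once (i)--(iii) are verified, results of Barlow--Bass, Grigor'yan--Telcs, and Kumagai deliver \eqref{e:1.1} directly.

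The main obstacle I expect is the Knight-move / resistance estimate itself. In the classical symmetric case each level-1 subcell admits a local reflection that allows one to ``flip'' a crossing and compare effective resistances; here corner and edge subcells carry different masses ($\tfrac{1}{4+4\rho}$ versus $\tfrac{\rho}{4+4\rho}$), so only the four-fold rotational symmetry and the reflections that permute subcells of equal weight survive. The required adaptation is a \emph{weighted} Knight move: one exploits the residual $D_4$-action, which is still transitive on the four corner subcells and on the four edge subcells separately, to perform the pigeonhole and current-rerouting arguments in two layers, and to propagate uniform comparability of effective resistances through scales. Once this weighted version is in place, the remainder of the construction is routine within the Barlow--Bass / Kusuoka--Zhou machinery.
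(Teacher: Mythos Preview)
Your proposal takes a genuinely different route from the paper. You outline the Kusuoka--Zhou cell-graph scheme: build weighted graph forms, find a renormalization fixed point, obtain a self-similar Dirichlet form on $F$, and then appeal to volume doubling $+$ Poincar\'e $+$ resistance characterizations. The paper instead follows the original Barlow--Bass pre-carpet method: it writes down an \emph{explicit} weighted form $\tilde{\mathcal{E}}_0(f)=\int_{\tilde F_0}|\nabla f|^2\,d\mu_0$ on the pre-carpet $\tilde F_0=\bigcup_m 3^m F_m$, proves the elliptic Harnack inequality and the resistance estimate $R_n\asymp\lambda^n$ for the associated diffusion $W$ on $\tilde F_0$, deduces the heat kernel bounds there via Grigor'yan--Hu, and finally constructs the process on $\tilde F$ (hence on $F$, by reflection on a uniform domain) as the scaling limit of $X^{(n)}_t=3^{-n}W_{3^{n\beta}t}$. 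Your route would deliver a self-similar form on $F$ from the outset; the paper's route buys the ability to bypass the fixed-point existence problem entirely, since on the pre-carpet the form is already given and all the analysis is probabilistic.

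Two remarks. First, the renormalization fixed-point step in your sketch (``compactness-plus-Knight-move'') is the place where a real argument is missing; in the unequal-weight setting there is no off-the-shelf result guaranteeing such a fixed point, and the paper deliberately avoids this issue. Second, your diagnosis of the Knight-move difficulty is right, but the paper's fix is not a two-layer pigeonhole over corner and edge cells. Instead it proves a \emph{folding principle} (Lemmas~\ref{lemma3.1}--\ref{lemma3.2}): for $x\in S_1\cap S_4$ and bounded $f$ supported on $\partial_o S_1\setminus\{y=0\}$,
\[
E^x\bigl[f\bigl(W(\mathcal{T}_{\partial_o(S_1\cup S_4)})\bigr)\bigr]=\frac{\mu_0(S_1)}{\mu_0(S_1)+\mu_0(S_4)}\,E^x\bigl[f\bigl(X(\mathcal{T}_{\partial_o S_1\setminus\{y=0\}})\bigr)\bigr],
\]
which reduces the weighted two-cell hitting problem to the single-cell reflected process $X$, where the classical symmetry arguments go through unchanged. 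This explicit weight correction, rather than a symmetry-group argument, is the engine of Section~\ref{section3}.
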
 

We conclude this section with an outline of the paper. 

Section \ref{section2} introduces the basic notation, including the definition of the Sierpi\'nski carpet and related structures. 

Section \ref{section3}, the major part of the paper, reproduces the Knight move argument of Barlow and Bass \cite{barlow1989construction}, overcoming a major difficulty arising from the presence of cells with different weights. 

Section \ref{section4} is devoted to proving the elliptic Harnack inequality. Section \ref{section5} establishes  resistance estimates. Finally, in Section \ref{section6}, we combine these results  to prove Theorem \ref{thm1}.

\section{Notation}\label{section2}
We begin by defining the Sierpi\'nski carpet and its pre-carpet.

\begin{deff}
Let $F_0=[0,1]^2$ be the unit square. For $i=1,2,\cdots,8$, define similarities $\Phi_i:\mathbb{R}^2\to \mathbb{R}^2$ as
\begin{align*}
&\Phi_1(x)=\frac{1}{3}x,\ \Phi_2(x)=\frac{1}{3}x+(\frac{1}{3},0),\ \Phi_3(x)=\frac{1}{3}x+(\frac{2}{3},0),\ \Phi_4(x)=\frac{1}{3}x+(\frac{2}{3},\frac{1}{3}),\\ &\Phi_5(x)=\frac{1}{3}x+(\frac{2}{3},\frac{2}{3}),\ \Phi_6(x)=\frac{1}{3}x+(\frac{1}{3},\frac{2}{3}),\ \Phi_7(x)=\frac{1}{3}x+(0,\frac{2}{3}),\ \Phi_8(x)=\frac{1}{3}x+(0,\frac{1}{3}).
\end{align*}

Let $F_n=\bigcup_{i=1}^8\Phi_i(F_{n-1})$ for $n\geq 1$, and let $F=\bigcap_n F_n$. We call $F$ the \textit{Sierpi\'nski carpet} (SC for abbreviation),  and call $\tilde{F}:=\bigcup_{n=0}^{\infty}3^n F$ the \textit{unbounded Sierpi\'nski carpet} (unbounded SC for abbreviation). Let $\tilde{F}_0=\bigcup_{m=0}^{\infty}3^mF_m$ be the \textit{pre-carpet}. For $n\geq 1$, denote $\tilde{F}_n=3^{-n}\tilde{F}_0$. 
\end{deff}

Let us introduce some more notations about word spaces and Dirichlet forms.
\begin{deff}
$(1)$. Let $\mathcal{S}_0=\emptyset,\ \mathcal{S}_1=\{1,2,3,4,5,6,7,8\}$ be the alphabet associated with $F$. Throughout the paper, we use $\mathcal{S}_n$ to denote the collection of \textit{words} of length $n$: $$\mathcal{S}_n=\mathcal{S}_1^n=\{w=w_1...w_n|w_i\in \mathcal{S}_1\}.$$

$(2)$. For $w\in \mathcal{S}_n$, we denote by $|w|=n$ the \textit{length} of $w$, and write 
$$\Phi_w:=\Phi_{w_1}\circ...\circ \Phi_{w_n},$$
and $F_w:=\Phi_wF_0$. Note that $F_w$ is a square in $F_n$ of side length $3^{-n}$.
\end{deff} 

In this paper, we consider non-standard Dirichlet forms on SC, whose weights of level $1$ cells will be different.  Throughout the paper, we fix a positive number $\rho$. Let 
\[
\rho_{2i}=\rho\hbox{ and }\rho_{2i-1}=1\quad\hbox{ for } 1\leq i\leq 4. 
\]
For longer words, we define 
\[
\rho_w=\rho_{w_1}\cdots \rho_{w_n}\hbox{ for }n\geq 1\hbox{ and }w\in \mathcal{S}_n. 
\]
We introduce the measures the Dirichlet forms associated with the above weights. \medskip

\noindent{(Measures). (a). Define the measure $\mu_0$ on $\tilde{F}_0$ as 
\[
\mu_0(A)=\lim\limits_{m\to\infty}\sum_{w\in \mathcal{S}_m}\rho_w\,|A\cap 3^mF_w|,
\]
for every Borel subset $A$ of $\tilde{F}_0$, where $|A|$ is the Lebesgue measure of $A$. 

(b). For $n\geq 1$, we define $\mu_n$ on $\tilde{F}_n$ as 
\[
\mu_n(A)=(\sum_{i=1}^8\rho_i)^{-n}\mu_0(3^nA),
\]
for every Borel subset $A$ of $\tilde{F}_n$.

(c). We denote by $\mu$ the vague limit of $\mu_n$ as $n\to\infty$. In particular, $\mu$ restricted on $F$ is the self-similar measure that satisfies 
\[
\mu(A)=(\sum_{i=1}^8\rho_i)^{-1}\sum_{i=1}^8\rho_i\mu(\Phi_i^{-1}(A)),
\]
for every Borel subset $A$ of $F$. \medskip


\noindent (Dirichlet form). Let 
$$
\|f\|_{\tilde{F}_0}:=\sqrt{\int_{\tilde{F}_0}|\nabla f|^2\mu_0(dx)+\int_{\tilde{F}_0}| f|^2\mu_0(dx)}\quad\hbox{ for }f\in C_c^1(\tilde{F}_0). 
$$
We define a strongly local, regular Dirichlet form $(\tmcE_0,\tmcF_0)$ on $L^2(\tilde{F}_0,\mu_0)$ as 
\begin{align*}
\tmcF_0\hbox{ is the closure of }C_c^1(\tilde{F}_0)\hbox{ in }\|f\|_{\tilde{F}_0},\\
\tmcE_0(f,g)=\int_{\tilde{F}_0}\nabla f(x)\cdot\nabla g(x)\mu_0(dx).
\end{align*}
Note that $\tmcF_0=W^{1,2}(\tilde{F}_0)$. Write $\tmcE_0(f)=\tmcE_0(f,f)$ for short. Our goal is to construct a Dirichlet form $L^2(F,\mu)$ as the scaling limit of $(\tmcE_0,\tmcF_0)$.}\medskip

Next, we define neighborhoods of a point $x\in\mathbb{R}^2$. Let
$$
\mathcal{Q}_n=\{[i3^{n},(i+1)3^{n}]\times [j3^{n},(j+1)3^{n}]:\ i,j\in\mathbb{Z}\}.
$$
Clearly, $\mathcal{Q}_n$ is a collection of squares whose vertices are $3^{n}$ lattices. Moreover, for $A\subset\R^2$, we write 
\[
\mathcal{Q}_n(A)=\{Q\in \mathcal{Q}_n:\hbox{int}(Q)\cap A\neq\emptyset\}. 
\]

(a). For $x\in \mathbb{R}^2$ and $n\in\mathbb{Z}$, we choose unique $i,j\in \mathbb{Z}$ so that 
\[
x\in [i3^{n},(i+1)3^{n})\times [j3^{n},(j+1)3^{n}),
\]
and let 
\[
Q_n(x):=[i3^{n},(i+1)3^{n}]\times [j3^{n},(j+1)3^{n}].
\] 

(b). For $x\in \mathbb{R}^2$ and $n\in\mathbb{Z}$, we choose unique $i,j\in \mathbb{Z}$ so that 
\[
x\in [(i-1/2)3^{n},(i+1/2)3^{n})\times[(j-1/2)3^{n},(j+1/2)3^{n}),
\]
and let 
\[
D_n(x):=[(i-1)3^{n},(i+1)3^{n}]\times [(j-1)3^{n},(j+1)3^{n}].
\]\smallskip

Finally, we introduce some notation related to random processes and stopping times. Let $X_t$ be a process with continuous path, and $A$ be a subset in the state space of $X$. 
Let \begin{equation*}
\begin{aligned}
\mathcal{T}_A(X)&=\inf\{t\geq 0:X_t\in A\}, \\
\tau_A(X)&=\inf\{t>0:X_t\notin A\},\\
\sigma_0^n(X)&=\inf\{t\geq0:X_t\in \bigcup_{Q\in \mathcal{Q}_n}\partial Q\},\\
\sigma_1^n(X)&=\inf\{t> 0:X_t\in \partial D_n(X(\sigma_0^n))\},\\
\sigma_{i+1}^n(X)&=\inf\{t>\sigma_i^n(X):X_t\in \partial D_n(X(\sigma_i^n))\}.
\end{aligned}
\end{equation*}

\section{ Knight moves}\label{section3}
Let $\tilde{F_0}$ be the pre-carpet, and let $(\tilde{\mcE}_0,\tmcF_0)$ be the Dirichlet form on $\tilde{F_0}$ defined in Section \ref{section2}. Since $(\tilde{\mcE}_0,\tmcF_0)$ is strongly local and regular, there exists a diffusion $W_t$ associated with $(\tilde{\mcE}_0,\tmcF_0)$ \cite{fukushima2011dirichlet}. Let  $m\in\mathbb{Z}$. In this section, we estimate the lower bound of hitting distribution on $\partial D_m(W(\sigma_i^m))$.  Let 
\[
 \mathcal{H}_m=\bigcup\limits_{Q,Q^{\prime}\in \mathcal{Q}_m; Q\neq Q^{\prime}}(Q\cap Q^{\prime}).
\]
So $\mathcal{H}_m$ is a countable union of line segments with length $3^{m}$. \medskip

We suppose the started point $x\in\mathcal{H}_m\cap \tilde{F}_0$, and $(i3^{m},j3^{m})$ be the center of $D_m(x)$. By definition, $\mathcal{H}_m$ divides $D_m(x)$ into 4 squares with side length $3^{-m}$. We label the four pieces $S_1,...,S_4$ in counterclockwise order, starting with 
\[
S_1=([i3^{m},(i+1)3^{m}]\times [j3^{m},(j+1)3^{m}])\cap \tF_0,
\] 
which is the upper-right piece of $D_m(x)\cap \tilde{F}_0$ (see Figure \ref{fig:Dn}). By rotation and reflection, we can assume $x\in S_1\cap S_4$ and $S_1\neq \emptyset$. We also set 
\begin{align*}
&\partial_o S_1=\partial S_1\cap \mathcal{H}_m,\\
& \partial_o(S_1\cup S_4)=(\partial(S_1\cup S_4))\cap \mathcal{H}_m. 
\end{align*}

\begin{figure}
\centering
\begin{tikzpicture}
\node[anchor=south west,inner sep=0](image) at (0,0) {\includegraphics[width=0.3\textwidth]{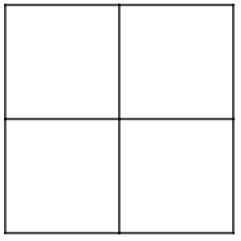}};
\node[align=left] at (3.4,1.2) {$S_4$};
\node[align=left] at (3.4,3.4) {$S_1$};
\node[align=left] at (1.2,3.4) {$S_2$};
\node[align=left] at (1.2,1.2) {$S_3$};
\end{tikzpicture}
\caption{$D_m(x)$ for $x\in \tilde{F}_0$.}
\label{fig:Dn}
\end{figure}

We say $S_i$ is \textit{filled} if $ int(S_i)=\emptyset$ and \textit{unfilled} otherwise. So $W$ can only move in unfilled squares. Since by assumption $S_1$ is unfilled, there are now 7 possible situations of filled and unfilled squares in $D_m(x)$:
\begin{flalign*}
\quad&(a).\ S_1\ is\ unfilled,\ S_2,\ S_3,\ S_4\ are\ filled,\\
&(b).\ S_1 ,\ S_2\ are\ unfilled,\ S_3,\  S_4\ are\ filled,\\
&(c).\ S_1 ,\ S_4\ are\ unfilled,\ S_2,\  S_3\ are\ filled,\\
&(d).\ S_1 ,\ S_2,\ S_3\ are\ unfilled,\ S_4\ is\ filled,\\
&(e).\ S_1 ,\ S_2,\ S_4\ are\ unfilled,\ S_3\ is\ filled,\\
&(f).\ S_1 ,\ S_3,\ S_4\ are\ unfilled,\ S_2\ is\ filled,\\
&(g).\ S_i's\ are\ all\ unfilled.&
\end{flalign*}

For simplicity, we take a translation that maps the center of $D_m(x)$ to the origin, and we focus on the hitting distribution of the process after translation, still denoted by $W.$ More precisely,  we consider two probabilities: 
\begin{equation*}
\begin{aligned}
&P^x(W\ \hbox{hits}\ (\{0\}\times [0,3^{m}/2])\ \hbox{before}\ \sigma_1^m(W)),\\
&P^x(W(\sigma_1^m(W))\in \{3^{m}\}\times [0,3^{m}/2]).
\end{aligned}    
\end{equation*}  
 Following the terminology of Barlow and Bass \cite{barlow1989construction}, we call the piece of the path of $W$ in which $W$ runs from $[0,3^{m}/2]\times \{0\}$ to $\{0\}\times [0,3^{m}/2]$ a \textit{corner move}, and $[0,3^{m}/2]\times \{0\}$ to $\{3^{m}\}\times [0,3^{m}/2]$ a \textit{knight move}.\medskip

Now we give the notion of harmonic functions. Let $(\mathcal{X},d,m)$ be a locally compact, separable metric measure space, and $(\mcE,\mcF)$ be a regular Dirichlet form on it. We say $u\in\mcF$ is \textit{harmonic} in an open $U\subsetneq \mathcal{X}$ if 
\[
\mcE(u,g)=0\quad\hbox{ for every }g\in\mcF\cap C_c(U). 
\]
If $u\in\mcF$ is harmonic in $U$ and is a quasi-continuous version, then for any relatively compact open $V\subsetneq U$   
\[
u(x)=E^x[u(X_{\tau_V})]\qquad\hbox{ for q.e. }x\in V.
\]
In particular, in the case that $u$ is bounded, it holds that
\[
u(x)=E^x[u(X_{\tau_U});\tau_U<\infty]. 
\]

We recall that $(\tmcE_0,\tmcF_0)$ is the Dirichlet form on $\tilde{F}_0$. Let $(\tmcE_0|_A,\tmcF_0|_A)$ be the reflected Dirichlet form of $\tmcE_0$ on a bounded Lipschitz domain $A\subsetneq \tilde{F}_0$, i.e. 
\[
\tmcE_0|_{A}(f,g)=\int_A \nabla f\cdot\nabla g d\mu_0\quad \hbox{ for }f,g\in \tmcF_0|_A. 
\]
Suppose $S_4$ is unfilled, $S_2,S_3$ are filled.
Note that by reflection symmetry, for $f,g\in \tmcF_0|_{S_1\cup S_4}$,
\begin{equation}\label{e:3.1}
\begin{split}  
	\tmcE_0|_{S_1\cup S_4}(f,g)&=\tmcE_0|_{S_1}(f|_{S_1},g|_{S_1})+\tmcE_0|_{S_4}(f|_{S_4},g|_{S_4})\\
	&=\tmcE_0|_{S_1}(f|_{S_1},g|_{S_1})+\frac{\mu_0(S_4)}{\mu_0(S_1)}\tmcE_0|_{S_1}(f|_{S_4}\circ \psi,g|_{S_4}\circ \psi),
\end{split} 
\end{equation}
where we denote by $\psi$  the reflection that maps $S_1$ to $S_4$.\medskip

The following lemma considers harmonic functions on $S_1\cup S_4$ with symmetry or skew-symmetric boundary values. 
\begin{lem}\label{lemma3.1}
Let $f\in C_c^\infty(\partial_o S_1-\{y=0\})$. 
	
(1). Let $h_1\in \tmcF_0|_{S_1\cup S_4}$ such that 
\begin{align*}
		&h_1=f\quad \hbox{on}\ \partial_o S_1-\{y=0\},\\
		&h_1\hbox{ is }\tmcE_0|_{S_1}\hbox{-harmonic in }(S_1\setminus\partial_o S_1)\cup\{y=0\},
\end{align*} 
and 
\[
h_1(x)=h_1(\psi(x))\quad\hbox{ for }x\in S_4.
\]   
Then, $h_1$ is $\tmcE_0|_{S_1\cup S_4} $-harmonic in $(S_1\cup S_4)\setminus\partial_o(S_1\cup S_4)$. 

(2). Let $h_2\in \tmcF_0|_{S_1\cup S_4}$ such that 
	\begin{align*}
		&h_2=f\quad \hbox{on}\ \partial_o S_1-\{y=0\},\\
		&h_2=0\qquad \hbox{on}\ \{y=0\},\\
		&h_2\ \hbox{is $\tmcE_0|_{S_1}$- harmonic in } S_1\setminus \partial_o S_1,
	\end{align*}    
and 
\[
h_2(x)=-\frac{\mu_0(S_1)}{\mu_0(S_4)}h_2(\psi(x))\quad\hbox{ for }x\in S_4.
\]
Then, $h_2$ is $\tmcE_0|_{S_1\cup S_4} $-harmonic in $(S_1\cup S_4)\setminus\partial_o(S_1\cup S_4)$.
\end{lem}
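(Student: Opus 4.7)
The plan is to test harmonicity directly against any admissible $g$, apply the reflection identity \eqref{e:3.1} to collapse the calculation on $S_1\cup S_4$ into one on $S_1$, and then use the hypothesized $\tmcE_0|_{S_1}$-harmonicity of $h_1$ (resp.\ $h_2$). The whole difficulty is bookkeeping: the factor $\mu_0(S_4)/\mu_0(S_1)$ coming from \eqref{e:3.1} has to interact with the $\pm\mu_0(S_1)/\mu_0(S_4)$ built into the (anti)symmetric extension, and the supports of the restrictions must fit into the precise open set on which each $h_i$ is harmonic. Membership $h_i\in \tmcF_0|_{S_1\cup S_4}$ is not an issue: in (1) the extension is symmetric so continuous across $\{y=0\}$, while in (2) both one-sided boundary values on $\{y=0\}$ vanish, so $h_2$ remains in the reflected Sobolev space.

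For part (1), fix $g\in \tmcF_0|_{S_1\cup S_4}\cap C_c\bigl((S_1\cup S_4)\setminus \partial_o(S_1\cup S_4)\bigr)$. Since $h_1|_{S_4}\circ\psi=h_1|_{S_1}$, identity \eqref{e:3.1} gives
\begin{equation*}
\tmcE_0|_{S_1\cup S_4}(h_1,g)=\tmcE_0|_{S_1}(h_1|_{S_1},g|_{S_1})+\tfrac{\mu_0(S_4)}{\mu_0(S_1)}\,\tmcE_0|_{S_1}(h_1|_{S_1},g|_{S_4}\circ\psi).
\end{equation*}
Because $g$ vanishes near $\partial_o(S_1\cup S_4)$ and $\psi$ maps $\partial_o S_4-\{y=0\}$ onto $\partial_o S_1-\{y=0\}$, both test functions $g|_{S_1}$ and $g|_{S_4}\circ\psi$ are compactly supported inside $(S_1\setminus\partial_o S_1)\cup\{y=0\}$. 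By the hypothesis that $h_1$ is $\tmcE_0|_{S_1}$-harmonic precisely on this enlarged open set (the Neumann-type open set including the bottom edge), each summand vanishes.

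For part (2), the skew-symmetry hypothesis reads $h_2|_{S_4}\circ\psi=-\tfrac{\mu_0(S_1)}{\mu_0(S_4)}h_2|_{S_1}$, so \eqref{e:3.1} yields
\begin{equation*}
\tmcE_0|_{S_1\cup S_4}(h_2,g)=\tmcE_0|_{S_1}\bigl(h_2|_{S_1},\,g|_{S_1}-g|_{S_4}\circ\psi\bigr),
\end{equation*}
the two measure factors cancelling exactly. Set $\phi:=g|_{S_1}-g|_{S_4}\circ\psi$. Since $\psi$ fixes $\{y=0\}$ pointwise and $g$ is continuous across this line, $\phi$ vanishes on $\{y=0\}$; combined with $g$ vanishing near $\partial_o S_1-\{y=0\}$, this places $\phi\in \tmcF_0|_{S_1}\cap C_c(S_1\setminus\partial_o S_1)$. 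The Dirichlet-type harmonicity of $h_2$ on $S_1\setminus\partial_o S_1$ then closes the proof.

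The main obstacle, if any, is confirming that the natural trace/reflection operations are compatible with the reflected Dirichlet form on the Lipschitz domain $S_1\cup S_4$ and that the different open sets (Neumann side in (1), Dirichlet side in (2)) correctly absorb the supports of the test functions. Once \eqref{e:3.1} is taken as given, everything reduces to matching supports and a two-line algebraic cancellation of the measure weights.
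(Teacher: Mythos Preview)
Your proof is correct and follows essentially the same approach as the paper: apply the reflection identity \eqref{e:3.1} and reduce to the $\tmcE_0|_{S_1}$-harmonicity hypothesis on appropriately supported test functions. Your handling of part~(2) is in fact slightly more economical than the paper's---where the paper splits the expression into two pieces (one vanishing by algebraic cancellation, the other by harmonicity of $h_2|_{S_4}\circ\psi$), you combine the weights in one step to reach $\tmcE_0|_{S_1}(h_2|_{S_1},\phi)$ directly and invoke harmonicity of $h_2|_{S_1}$ once; the only caveat is that $\phi=g|_{S_1}-g|_{S_4}\circ\psi$ vanishes \emph{on} $\{y=0\}$ but not necessarily in a neighborhood of it, so strictly speaking $\phi$ lies in the $\tmcE_0|_{S_1}$-closure of $C_c(S_1\setminus\partial_oS_1)$ rather than in $C_c(S_1\setminus\partial_oS_1)$ itself, a standard density point that the paper leaves equally implicit.
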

\begin{proof} 
(1). Let $g\in  C_c^1((S_1\cup S_4)\setminus\partial_o(S_1\cup S_4))$. Then $g|_{S_1}$ and $g|_{S_4}\circ \psi\in C_c^1((S_1\setminus\partial_o S_1)\cup \{y=0\})$. Since $h_1$ is $\tmcE_0|_{S_1}$-harmonic in $(S_1\setminus\partial_o S_1)\cup\{y=0\}$, 
\[
\tmcE_0|_{S_1}(h_1|_{S_1},g|_{S_1})=\tmcE_0|_{S_1}(h_1|_{S_1},g|_{S_4}\circ \psi)=0.
\] 
So by \eqref{e:3.1}
\begin{align*}
	\tmcE_0|_{S_1\cup S_4}(h_1,g)
&=\tmcE_0|_{S_1}(h_1|_{S_1},g|_{S_1})+\frac{\mu_0(S_4)}{\mu_0(S_1)}\tmcE_0|_{S_1}(h_1|_{S_4}\circ\psi, g|_{S_4}\circ \psi)\\
&=\tmcE_0|_{S_1}(h_1|_{S_1},g|_{S_1})+\frac{\mu_0(S_4)}{\mu_0(S_1)}\tmcE_0|_{S_1}(h_1|_{S_1}, g|_{S_4}\circ \psi)=0.
\end{align*}

(2). Let $g\in C_c^1((S_1\cup S_4)\setminus\partial_o(S_1\cup S_4))$.By the relation $h_2|_{S_4}=-\frac{\mu_0(S_1)}{\mu_0(S_4)}h_2|_{S_1}\circ \psi$, we notice that  
\begin{align*}
\tmcE_0|_{S_1}(h_2|_{S_1},g|_{S_1})+\frac{\mu_0(S_4)}{\mu_0(S_1)}\tmcE_0|_{S_1}(h_2|_{S_4}\circ \psi,g|_{S_1})=\tmcE_0|_{S_1}(h_2|_{S_1},g|_{S_1})-\tmcE_0|_{S_1}(h_2|_{S_1},g|_{S_1})=0,
\end{align*}
and 
\begin{align*}
\frac{\mu_0(S_4)}{\mu_0(S_1)}\tmcE_0|_{S_1}(h_2|_{S_4}\circ \psi,g|_{S_4}\circ\psi-g|_{S_1})=0,
\end{align*}
as $g|_{S_4}\circ \psi-g|_{S_1}=0$ on $\partial_oS_1$ and $h_2|_{S_4}\circ
\psi$ is $\tmcE|_{S_1} $-harmonic in $S_1\setminus \partial_o S_1$. 

Adding the above two equalities, we see that 
\[
\tmcE_0|_{S_1\cup S_4}(h_2,g)
=\tmcE_0|_{S_1}(h_2|_{S_1},g|_{S_1})+\frac{\mu_0(S_4)}{\mu_0(S_1)}\tmcE_0|_{S_1}(h_2|_{S_4}\circ\psi, g|_{S_4}\circ \psi)=0.
\]
\end{proof}

Throughout the following context, we write $X_t$ for the diffusion associated with $\tmcE_0|_{S_1}$. By Lemma \ref{lemma3.1}, we can get 
\begin{lem}\label{lemma3.2}
$E^x[f\big(W(\mathcal{T}_{\partial_o(S_1\cup S_4)})\big)]=\frac{\mu_0(S_1)}{\mu_0(S_1)+\mu_0(S_4)}E^x[f\big(X(\mathcal{T}_{\partial_o S_1-\{y=0\}})\big)]$ for $ x\in S_1\cap S_4$ and $f\in \mathcal{B}_b(\partial_o S_1-\{y=0\}).$
Here $f$ extends to $\partial_o(S_1\cup S_4)$ by zero extension.
\end{lem}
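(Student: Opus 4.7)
The plan is to realize the left-hand side as the value at $x$ of a bounded $\tmcE_0|_{S_1\cup S_4}$-harmonic function on $S_1\cup S_4$ whose Dirichlet data on $\partial_o(S_1\cup S_4)$ is $\tilde f$, the zero extension of $f$. I would build this function as $h=h_1+h_2$, where $h_1$ comes from Lemma~\ref{lemma3.1}(1) and $h_2$ from Lemma~\ref{lemma3.1}(2). The point of the decomposition is that $x$ lies on the reflection axis $\{y=0\}$ where $h_2$ vanishes by construction, so only the symmetric piece contributes, and that piece can be read off as an expectation for the reflecting diffusion $X$ on $S_1$ alone.

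To fix the constants, I would apply Lemma~\ref{lemma3.1}(1) with Dirichlet data $c_1 f$ and Lemma~\ref{lemma3.1}(2) with Dirichlet data $c_2 f$ on $\partial_o S_1-\{y=0\}$, and then choose $c_1,c_2$ so that $h_1+h_2$ matches $\tilde f$ on the whole outer boundary. Matching on $\partial_o S_1-\{y=0\}$ forces $c_1+c_2=1$; matching on $\partial_o S_4-\{y=0\}$, via the identities $h_1(\psi(z))=h_1(z)$ and $h_2(\psi(z))=-\tfrac{\mu_0(S_1)}{\mu_0(S_4)}h_2(z)$ for $z\in\partial_o S_1-\{y=0\}$ inherited from Lemma~\ref{lemma3.1}, forces $c_1=\tfrac{\mu_0(S_1)}{\mu_0(S_4)}c_2$. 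The unique solution is
\[
c_1=\frac{\mu_0(S_1)}{\mu_0(S_1)+\mu_0(S_4)},\qquad c_2=\frac{\mu_0(S_4)}{\mu_0(S_1)+\mu_0(S_4)}.
\]
The probabilistic representation recalled just before Lemma~\ref{lemma3.1} then yields $E^x[\tilde f(W(\mathcal{T}_{\partial_o(S_1\cup S_4)}))]=h(x)=h_1(x)+h_2(x)$.

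At $x\in S_1\cap S_4\subset\{y=0\}$ the antisymmetric piece vanishes, and the symmetric piece is the value at $x$ of the $\tmcE_0|_{S_1}$-harmonic function on $S_1$ with Neumann condition on $\{y=0\}$ and Dirichlet data $c_1 f$ on $\partial_o S_1-\{y=0\}$; a second application of the probabilistic representation, now for the process $X$, gives $h_1(x)=c_1\,E^x[f(X(\mathcal{T}_{\partial_o S_1-\{y=0\}}))]$, which combined with the formula for $c_1$ is exactly the claim. The two points I expect to handle with a little care are: first, the identification of the killed $W$-distribution on $\partial_o(S_1\cup S_4)$ with that of the $\tmcE_0|_{S_1\cup S_4}$-diffusion, which follows because with $S_2,S_3$ filled the set $\tilde F_0\cap D_m(x)$ coincides with $S_1\cup S_4$ and $W$ cannot exit without crossing $\partial_o(S_1\cup S_4)$; second, that Lemma~\ref{lemma3.1} is stated for $f\in C_c^\infty$, whereas here $f$ is merely bounded Borel, which is remedied by a routine bounded-pointwise approximation together with dominated convergence on both sides of the identity.
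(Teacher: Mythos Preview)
Your proposal is correct and follows essentially the same approach as the paper: decompose the harmonic extension of the zero-extended $f$ into the symmetric and skew-symmetric pieces $h_1,h_2$ of Lemma~\ref{lemma3.1}, observe that $h_2$ vanishes on $S_1\cap S_4$, and identify $h_1$ at $x$ with the expectation for the reflecting process $X$ on $S_1$. The only cosmetic difference is bookkeeping---the paper writes the decomposition as $h=\tfrac{\mu_0(S_1)}{\mu_0(S_1)+\mu_0(S_4)}\bigl(h_1+\tfrac{\mu_0(S_4)}{\mu_0(S_1)}h_2\bigr)$ with $h_1,h_2$ built from data $f$, which is the same linear combination as yours---and the paper invokes the monotone class theorem rather than bounded-pointwise approximation for the extension from $C_c^\infty$ to $\mathcal{B}_b$.
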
 
\begin{proof}
We denote $T_1=\mathcal{T}_{\partial_o(S_1\cup S_4)},\ T_2=\mathcal{T}_{\partial_oS_1-\{y=0\}}$. First let $f\in C_c^{\infty}(\partial_o S_1\setminus \{y=0\})$, and extend $f$ to $(\partial_o (S_1\cup S_4))$ by zero extension. Set $h(x)=E^x[f(W_{T_1})]$. Then $h(x)$ is harmonic in $(S_1\cup S_4)-\partial_o(S_1\cup S_4)$. Note that 
\[
h(x)=\frac{\mu_0(S_1)}{\mu_0(S_1)+\mu_0(S_4)}(h_1(x)+\frac{\mu_0(S_4)}{\mu_0(S_1)}h_2(x)),
\]
 where $h_1,h_2$ are defined with $f$ as in Lemma \ref{lemma3.1}. Since $h_2(x)=0$ as $x\in S_1\cap S_4$, \[
h(x)=\frac{\mu_0(S_1)}{\mu_0(S_1)+\mu_0(S_4)}h_1(x)=\frac{\mu_0(S_1)}{\mu_0(S_1)+\mu_0(S_4)}E^x[f(X(T_2))].
\] 
 This proves the desired equality for smooth $f$. The equality extends to hold for $f\in \mathcal{B}_b(\partial_o S_1-\{y=0\})$ by the monotone class theorem for functions \cite[A.1.3.]{chen2012symmetric}.  
\end{proof}

Lemma \ref{lemma3.2} can be thought as a principle of folding the process in two squares to one square. 

\subsection{Corner moves}
Divide $\partial (S_1\cup S_4)\cap \mathcal{H}_m$ into 12 line segments, each of length $3^{m}/2$, which we label counterclockwise $L_1,...,L_{12}$, starting with $L_1=\{3^{m}\}\times [0,3^{m}/2]$. Define $T=\mathcal{T}_{\partial_o(S_1\cup S_4)}(W)$. Set $p_i(x)=P^x(W_T\in L_i)$.

\begin{thm}\label{theorem3.3}	
$(1)$. If $S_4$ is filled then $p_6(x)\geq \frac{1}{6}$ for $x\in [0,3^{m}/2]\times \{0\}$.

$(2)$. If $S_4$ is unfilled then $p_6(x)\geq \frac{\mu_0(S_1)}{6(\mu_0(S_1)+\mu_0(S_4))}$ for $x\in [0,3^{m}/2]\times \{0\}$.
  \begin{proof}
$(1)$.  Set $S=\inf\{t>0,W_t\in\{3^{m}/2\}\times [0,3^{m}]\}$. Then 
\begin{align*} 
P^x(W_T\in L_1)&=P^x(W_T\in L_1,S<T)\\
&=E^x[P^{W_S}(W_T\in L_1);S<T]\\
&=E^x[P^{W_S}(W_T\in L_6);S<T]\\
&\leq P^x(W_T\in L_6).
\end{align*}
So $p_1\leq p_6$, similarly $p_2\leq p_5,\ p_3\leq p_4$. 

Next, we set $R_0=0$, and 
\begin{align*} 
V_i&=\inf\{t>R_i,W_t\in [0,3^{m}]\times \{3^{m}/2\}\}\quad\hbox{ for }i\geq 0,\\ R_i&=\inf\{t>V_{i-1};\ W_t\in[0,3^{m}]\times\{0\}\}\quad\hbox{ for }i\geq 1. 
\end{align*}  
Then $p_6=\sum_{i=0}^{\infty}P^x(W_T\in L_6,\ R_i<T<R_{i+1})$, and by the strong Markov property,
\begin{align}
    p_5&=\sum_{i=0}^\infty P^x(W_T\in L_5,\ R_i<T<R_{i+1})\nonumber\\ 
    &=\sum_{i=0}^\infty P^x(W_T\in L_5,\ V_i<T<R_{i+1})\nonumber\\
    &=\sum_{i=0}^\infty E^x[P^{W_{V_i}}(W_T\in L_5,\ T<R_1);\ V_i<T]\nonumber\\
    &=\sum_{i=0}^\infty E^x[P^{W_{V_i}}(W_T\in L_6,\ T<R_1);\ V_i<T]\nonumber\\
    &=\sum_{i=0}^\infty P^x(W_T\in L_6;V_i<T<R_{i+1})\leq p_6.\label{e:1}
    \end{align}
 Similarly, $p_4\leq p_5$ by applying the reflection symmetry about the diagonal connecting $(0,3^{m})$ and $(3^{m},0)$. 

To conclude, we proved $p_6\geq p_5\geq p_4$ and $p_4+p_5+p_6\geq p_1+p_2+p_3$, hence 
\[
p_6\geq \frac13\sum_{i=4}^6p_i\geq\frac{1}{6}\sum_{i=1}^6p_i=\frac16.
\]

For $(2)$, set $f=\mathds{1}_{L_6}$ be the indicator function of $L_6$ on $\partial_o (S_1\cup S_4)$ in Lemma \ref{lemma3.2} and use $(1)$ above, 
\[
p_6(x)=P^x(W_T\in L_6)=\frac{\mu_0(S_1)}{\mu_0(S_1)+\mu_0(S_4)}P^x(X(\mathcal{T}_{\partial_o S_1-\{y=0\}})\in L_6)\geq \frac{\mu_0(S_1)}{6(\mu_0(S_1)+\mu_0(S_4))}.
\]
  \end{proof}  
\end{thm}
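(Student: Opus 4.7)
The plan is to prove (1) by exploiting the full symmetry group of $S_1$, and then deduce (2) via the folding identity of Lemma \ref{lemma3.2}.

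For (1), since $S_4$ is filled the diffusion $W$ is confined to $S_1$ and cannot exit through $L_7, \ldots, L_{12}$, so $\sum_{i=1}^{6} p_i(x) = 1$. It suffices to prove the two monotonicity chains $p_1 \le p_6$, $p_2 \le p_5$, $p_3 \le p_4$ and $p_4 \le p_5 \le p_6$, for then $3 p_6 \ge p_4 + p_5 + p_6 \ge p_1 + p_2 + p_3$ gives $p_6 \ge 1/6$. The first chain would follow from strong Markov at the first hit $S$ of the vertical midline $\{x = 3^{m}/2\}$: since $L_1, L_2, L_3$ lie in the right half $\{x \ge 3^{m}/2\}$ and $x$ lies in the left half, the events $\{W_T \in L_i\}$ for $i=1,2,3$ are contained in $\{S < T\}$, and the vertical reflection symmetry of $\mu_0$ on $S_1$ (which follows from the fully square-symmetric choice of weights) gives $P^{W_S}(W_T \in L_i) = P^{W_S}(W_T \in L_{7-i})$, whence the inequalities.

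The chain $p_4 \le p_5 \le p_6$ is the main technical step, since the natural horizontal reflection about $\{y = 3^{m}/2\}$ does not fix the starting point and moves the starting line $\{y = 0\}$ off of the rectangle $S_1 \cup S_4$. I would address this with an excursion decomposition: introduce successive hits $V_i$ of the horizontal midline and returns $R_i$ to $\{y = 0\}$, and partition $\{W_T \in L_5\}$ according to the unique index with $R_i < T < R_{i+1}$. Strong Markov at $V_i$ reduces matters to computing $P^{W_{V_i}}(W_T \in L_5, T < R_1)$ from a point on the midline, conditioned on no return to $\{y = 0\}$. On this event the process stays in $S_1$, so the horizontal reflection symmetry of the Dirichlet form on $S_1$ applies and identifies this with $P^{W_{V_i}}(W_T \in L_6, T < R_1)$. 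Resumming gives $p_5 \le p_6$, and a parallel excursion argument using the anti-diagonal reflection through $(0, 3^{m})$ and $(3^{m}, 0)$ yields $p_4 \le p_5$.

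For (2), apply Lemma \ref{lemma3.2} with $f = \mathbf{1}_{L_6}$ (noting that $L_6 \subset \partial_o S_1 \setminus \{y = 0\}$), which expresses $p_6(x)$ as $\frac{\mu_0(S_1)}{\mu_0(S_1) + \mu_0(S_4)}$ times the analogous exit probability of the reflected diffusion $X$ on $S_1$ alone; applying part (1) to $X$ (the filled-$S_4$ scenario) completes the proof. I expect the main obstacle to be the $p_5 \le p_6$ step: the horizontal reflection globally swaps the starting line $\{y = 0\}$ with the opposite edge $\{y = 3^{m}\}$, so its use is only legitimate on the post-$V_i$ portion of the trajectory under the conditioning $\{T < R_1\}$ that keeps the path inside the reflection-symmetric square $S_1$. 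Setting up the excursion bookkeeping correctly so that every upper-half event in $p_5$ is matched with a corresponding lower-half event in $p_6$ is the crux of the Knight-move argument in this asymmetrically-weighted setting.
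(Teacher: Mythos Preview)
Your proposal is correct and follows essentially the same approach as the paper: the paper likewise uses strong Markov at the vertical midline for $p_i \le p_{7-i}$, the excursion decomposition with $V_i$ (hits of the horizontal midline) and $R_i$ (returns to $\{y=0\}$) together with horizontal reflection to get $p_5 \le p_6$, the anti-diagonal reflection for $p_4 \le p_5$, and Lemma~\ref{lemma3.2} with $f = \mathds{1}_{L_6}$ for part (2). One small clarification: in case (1) the process is already confined to $S_1$ throughout, so the role of the conditioning $\{T < R_1\}$ is not to keep the path in $S_1$ but to ensure that the post-$V_i$ path avoids $\{y=0\}$, which makes the event ``hit $L_5$ before the rest of $\partial S_1$'' map under horizontal reflection exactly to ``hit $L_6$ before the rest of $\partial S_1$''.
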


\subsection{Knight moves} Let $L_1=\{3^{m}\}\times [0,3^{m}/2]$, and $L_i,1\leq i\leq 16$ be counterclockwise labeled on $\partial [-3^{m},3^{m}]^2$, each of length $3^{m}/2$. Let $T=\sigma_1^m(W),\ p_i(x)=P^x(W_T\in L_i)$. Note that if some $S_j$ is filled, then some of the $p_i(x)$ are zero. In the rest of this section, we will prove $p_1(x)\geq c$ for a positive constant $c$ independent of $x$ and $m$. 

First, we prove a special case of knight moves by considering the reflected process in a square. Recall that $X_t$ is the reflected diffusion in $S_1$. 
\begin{prop}\label{prop3.4}
$P^x(X_T\in L_1)\geq \frac14$ for $x\in [0,3^{m}/2]\times\{0\}$, where $T=\mathcal{T}_{L_1\cup L_2\cup L_3\cup L_4}$. 
\end{prop}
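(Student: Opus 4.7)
Plan: Write $p_i = P^x(X_T \in L_i)$ for $i = 1, 2, 3, 4$. Since $X$ is a reflected diffusion in the bounded set $S_1$, it almost surely hits the stopping set, so $p_1 + p_2 + p_3 + p_4 = 1$. I will prove $p_1 \geq \max(p_2, p_3, p_4)$, which forces $p_1 \geq 1/4$ by averaging. This reduces to three comparisons that I handle with two distinct reflection symmetries of $\tmcE_0|_{S_1}$. Throughout, I write $(u, v)$ for a generic point of $S_1$.

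First, for $p_4 \leq p_1$ and $p_3 \leq p_2$, I will use the diagonal reflection $\psi_d(u, v) = (v, u)$, which is a symmetry of $\tmcE_0|_{S_1}$, swaps $L_1 \leftrightarrow L_4$ and $L_2 \leftrightarrow L_3$, and fixes $L_1 \cup L_2 \cup L_3 \cup L_4$ setwise. The starting point $x \in [0, 3^m/2] \times \{0\}$ lies in the closed lower triangle $\{u \geq v\}$, whereas $L_4 \subset \{v > u\}$; by path continuity any route from $x$ to $L_4$ must first cross the diagonal at a stopping time $\tau$. Strong Markov at $\tau$ together with $\psi_d$-symmetry from the diagonal fixed points yields
\[
p_4 = E^x[P^{X_\tau}(X_T \in L_4); \tau < T] = E^x[P^{X_\tau}(X_T \in L_1); \tau < T] \leq p_1,
\]
and the identical argument with $L_3$ in place of $L_4$ gives $p_3 \leq p_2$.

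The crux is $p_1 \geq p_2$, for which I will use the vertical reflection $\psi_v(u, v) = (u, 3^m - v)$, another symmetry of $\tmcE_0|_{S_1}$. It swaps $L_1 \leftrightarrow L_2$, swaps the top edge $L_3 \cup L_4$ with the bottom edge $B := [0, 3^m] \times \{0\}$, and fixes the midline $M := [0, 3^m] \times \{3^m/2\}$ pointwise. Setting $R_0 := \inf\{t : X_t \in B\}$, the set $L_1 \cup L_2 \cup L_3 \cup L_4 \cup B$ is $\psi_v$-invariant, and the $\psi_v$-symmetry of the entry distribution from any $y \in M$ yields the midline identity
\[
P^y(X_T \in L_1, T < R_0) = P^y(X_T \in L_2, T < R_0).
\]
I then set $S_0 := \inf\{t > 0 : v(X_t) = 3^m/2\}$ and recursively $R_i := \inf\{t > S_i : v(X_t) = 0\}$, $S_{i+1} := \inf\{t > R_i : v(X_t) = 3^m/2\}$. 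On $\{T < S_0\}$ and each $\{R_i < T < S_{i+1}\}$ the process satisfies $v(X_t) < 3^m/2$ throughout, forcing $X_T \in L_1$, while $\{X_T \in L_2\}$ requires $T \in (S_i, R_i)$ for some $i$. Decomposing $p_1$ and $p_2$ over these intervals, applying strong Markov at each $S_i$, and invoking the midline identity, the contributions from every $\{S_i < T < R_i\}$ match between $p_1$ and $p_2$, leaving
\[
p_1 - p_2 = P^x(T < S_0) + \sum_{i \geq 0} P^x(R_i < T < S_{i+1}) \geq 0.
\]

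The main obstacle will be executing this return argument cleanly: justifying the midline identity rigorously from $\psi_v$-symmetry, verifying that the alternating stopping times almost surely partition $\{S_0 < T\}$ into the upward intervals $(S_i, R_i)$ and downward intervals $(R_i, S_{i+1})$, and confirming that $v \leq 3^m/2$ throughout the downward intervals so that any stopping there lies in $L_1$. Once this is in hand, combining $p_1 \geq p_2$ with $p_1 \geq p_4$ and $p_2 \geq p_3$ delivers $p_1 \geq \max(p_2, p_3, p_4)$ and hence $p_1 \geq 1/4$.
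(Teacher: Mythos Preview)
Your proposal is correct and follows essentially the same approach as the paper. The paper's proof is very terse---it simply says ``using a similar argument as in \eqref{e:1}, we have $p_1\geq p_2$, and $p_1+p_2\geq p_3+p_4$''---but the underlying mechanism is identical to yours: the inequality $p_1\geq p_2$ comes from the horizontal midline returning argument (your $\psi_v$), and the comparison between the right edge and the top edge comes from a diagonal reflection together with a first-crossing argument. The only cosmetic difference is that you use the main diagonal $\psi_d(u,v)=(v,u)$ to obtain the separate inequalities $p_4\leq p_1$ and $p_3\leq p_2$, whereas the paper (implicitly, following the pattern after \eqref{e:1}) uses the anti-diagonal to obtain the combined bound $p_3+p_4\leq p_1+p_2$; either version suffices for $p_1\geq\frac14$.
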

\begin{proof}
Using a similar argument as in \eqref{e:1}, we have $p_1\geq p_2$, and $p_1+p_2\geq p_3+p_4$.
So $p_1(x)\geq \frac{1}{4}$.
\end{proof}

For general cases, set 
\begin{align*}
&O=(0,0),\\
&V_i=(3^{m}\cos\frac{i-1}{2}\pi,3^{m}\sin\frac{i-1}{2}\pi),\qquad i=1,...,4,\\
&\Gamma=\partial[-3^{m},3^{m}]^2,\\
&A_i=\overline{OV_i},\qquad i=1,...,4,\\
&\eta_0=0,\ Y_0=1,\\
&\eta_r=\{t>\eta_{r-1},X_t\in \Gamma\cup (\cup_{i=1}^4 A_i)\setminus A_{Y_{r-1}}\}\hbox{, where }Y_r\hbox{ is the random variable such that }W_{\eta_r}\in A_{Y_r},\\
&N=\min\{r:X_{\eta_r}\in \Gamma\}.
\end{align*}

 Due to Lemma \ref{lemma3.2}, $Y_r$ has the transition probability 
\[
P^x(Y_r=j|\,Y_{r-1}=i,r<N)=p_{i,j} :=\begin{cases}
\frac{\mu_0(S_i)}{\mu_0(S_{i})+\mu_0(S_{i-1})}&\hbox{ if } j=i+1,\\
\frac{\mu_0(S_{i-1})}{\mu_0(S_{i})+\mu_0(S_{i-1})}&\hbox{ if }j=i-1,\\
0&\hbox{ otherwise}. 
\end{cases} 
\]
\medskip

Let $\phi_{ i}$ be the rotation taking $A_{ i}$ to $A_1$, more precisely, for $x=(x^{(1)},x^{(2)})$,
$$\phi_{i}(x)=(x^{(1)}\cos\frac{i-1}{2}\pi+x^{(2)}\sin\frac{i-1}{2}\pi,-x^{(1)}\sin\frac{ i-1}{2}\pi+x^{(2)}\cos\frac{ i-1}{2}\pi).$$ 
Set $\Delta$ be an isolated point which is not in $\R^2$. Define the diffusion in $S_1\cup S_4$ killed on $\partial_o (S_1\cup S_4)$ by:
\begin{align*}
 B_t^r&=\phi_{Y_r}(W_{\eta_r+t}),&&\qquad 0\leq t\leq \eta_{r+1}-\eta_r,\qquad 0\leq r\leq N-1,\\
 B_t^r&=\Delta,&&\qquad t>\eta_{r+1}-\eta_r, \qquad 0\leq r\leq N-1.
\end{align*}


\begin{lem}\label{lemma3.5}
	$B_0^r$ is independent with $Y_r$ under $P^x$ and condition $\{r<N\}$.
\end{lem}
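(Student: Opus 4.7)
The plan is to proceed by induction on $r$, combining the strong Markov property with the folding identity of Lemma~\ref{lemma3.2}. The base case $r=0$ is trivial, since $Y_0=1$ and $B_0^0=\phi_1(W_0)=x$ are both deterministic, so they are independent.

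For the inductive step, I would condition on $\mathcal{F}_{\eta_{r-1}}$, in particular on $Y_{r-1}=i$ and $W_{\eta_{r-1}}=w\in A_i$. By the strong Markov property the piece of path from $\eta_{r-1}$ to $\eta_r$ is a fresh diffusion starting at $w$ that moves inside the pair $S_i\cup S_{i-1}$ until it hits $A_{i-1}\cup A_{i+1}\cup\Gamma$. Applying Lemma~\ref{lemma3.2} to $(S_i,S_{i-1})$ folded across $A_i$, the hitting distribution of $W$ on $A_{i+1}$ equals $\tfrac{\mu_0(S_i)}{\mu_0(S_i)+\mu_0(S_{i-1})}$ times the hitting distribution of the associated reflected diffusion in a single canonical square, and analogously on $A_{i-1}$ with the complementary weight factor. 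Pushing forward by $\phi_{i+1}$ or $\phi_{i-1}$, the conditional law of $B_0^r$ given the inductive data and the value of $Y_r$ becomes the normalised exit law of that reflected diffusion, transported onto $A_1$.

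The decisive step is to show that this transported exit law does not depend on which of the two adjacent axes is selected, nor on the index $i$. For this I would invoke the $\pi/2$-rotational invariance of the carpet weights (Figure~\ref{fig2}), which makes the reflected diffusions on the various $S_j$, after rotation to $S_1$, share the same Dirichlet form and hence the same exit distribution. Combined with the inductive hypothesis that $B_0^{r-1}$ is independent of $Y_{r-1}$, summing out $Y_{r-1}$ then gives that $P^x(B_0^r\in E\mid Y_r=j,\ r<N)$ is independent of $j$.

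The main obstacle I anticipate is handling the rotational matching across the configurations (a)--(g): when some $S_j$ are filled, $Y_r$ is restricted to the admissible axes, and one must apply the folding identity only to pairs of adjacent unfilled squares while checking that the rotational identification of the reflected diffusions remains valid on the accessible configuration. Ensuring that the two reflection-symmetry ingredients in Lemma~\ref{lemma3.1}(1)--(2) transfer uniformly after rotation, independently of which filled/unfilled pattern is in force, is the bookkeeping heart of the argument.
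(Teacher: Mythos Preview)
Your proposal is correct and follows essentially the same route as the paper: induction on $r$, strong Markov at $\eta_{r-1}$, then the folding identity of Lemma~\ref{lemma3.2} to factor the exit law as (transition probability $p_{i,i\pm1}$) $\times$ (exit law of the reflected diffusion in a single square), with the latter being independent of $i$ by the $D_4$ symmetry of the internal weights. The paper writes this out as the chain \eqref{e:3.3}--\eqref{e:3.10}, starting the induction at $r=1$ rather than $r=0$, but the content is identical.

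One remark: your worry about the configurations (a)--(g) is slightly overblown. In the paper's argument the filled squares simply make certain transitions deterministic (e.g.\ if $S_4$ is filled then $Y_1=2$ almost surely on $\{1<N\}$), so independence is trivial in those directions and Lemma~\ref{lemma3.2} is only invoked on genuinely adjacent unfilled pairs; no separate case-by-case bookkeeping is needed. Also, the symmetry you need is not full $\pi/2$-rotational invariance of $\tmcE_0|_{S_i\cup S_{i-1}}$ (which fails, since the ratio $\mu_0(S_i)/\mu_0(S_{i-1})$ varies with $i$), but only that the reflected diffusion in each single $S_j$ is, after rotation to $S_1$, the same process---which holds because the Dirichlet forms differ only by the scalar $\mu_0(S_j)/\mu_0(S_1)$. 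You state this correctly, but it is worth being precise that the weight ratio is absorbed entirely into $p_{i,j}$ and does not contaminate the reflected exit law.
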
		
\begin{proof}
Take $A\subset A_1$ and $i\in \{1,2,3,4\}$. It suffices to prove 
\begin{equation}\label{e:3.2} 
    P^x(B_0^r\in A, Y_r=i|r<N)=P^x(B_0^r\in A|r<N)P^x(Y_r=i|r<N).
\end{equation}
We prove \eqref{e:3.2} by induction. 

First set $r=1$. Note that if $S_4$ is filled, then $Y_1=2$ almost surely on $\{1<N\}$, and \eqref{e:3.2} is obvious. If $S_4$ is unfilled, then $Y_1=2,4$ almost surely and 
\begin{equation}\label{e:3.3}
\begin{aligned}
P^x(B_0^1\in A,Y_1=2,1<N )&=P^x\big(W(\mathcal{T}_{\partial_o (S_1\cup S_4)})\in \phi_2^{-1}(A)\big)\\
&=\frac{\mu_0(S_1)}{\mu_0(S_1)+\mu_0(S_4)}P^x\big(X(\mathcal{T}_{\partial_o S_1-\{y=0\}})\in \phi_2^{-1}(A)\big)\\
&=P^x(Y_1=2|1<N)P^x\big(X(\mathcal{T}_{\partial_o S_1-\{y=0\}})\in \phi_2^{-1}(A)\big)
\end{aligned}
\end{equation}
where the second equality holds by Lemma \ref{lemma3.2}, and recall that $X_t$ is the diffusion in $S_1$ associated with $\tmcE_0|_{S_1}$. The same holds for $Y_1=4$:
\begin{equation}\label{e:3.4}
 P^x(B_0^1\in A,Y_1=4,1<N )=P^x(Y_1=4|1<N)P^x(X(\mathcal{T}_{\partial_o S_1-\{y=0\}})\in \phi_2^{-1}(A)).   
\end{equation}
The equalities \eqref{e:3.3} and \eqref{e:3.4} together give 
\begin{equation}\label{e:3.5}
P^x(B_0^1\in A,1<N )=P^x(X(\mathcal{T}_{\partial_o S_1-\{y=0\}})\in \phi_2^{-1}(A)).
\end{equation}
Combining \eqref{e:3.3}, \eqref{e:3.4} and \eqref{e:3.5}, we get \eqref{e:3.2}. 

Next, we prove the claim for $r\geq 2$ by induction. Suppose the lemma is valid for $r-1$. 
Then, 
\begin{equation}\label{e:3.6}
\begin{aligned}
   &\quad\ P^x(B_0^r\in A,Y_r=i,Y_{r-1}=i-1,r<N)\\
    &=E^x[P^{B_0^{r-1}}(B_{\eta_1}^0\in \phi_2^{-1}(A));Y_{r-1}=i-1,r-1<N]\\
    &=E^x[p_{i-1,i}P^{B_0^{r-1}}(B_{0}^1\in A);Y_{r-1}=i-1,r-1<N] \\
 &=p_{i-1,i}E^x[P^{B_0^{r-1}}(B_{0}^1\in A);r-1<N]P^x(Y_{r-1}=i-1|r-1<N)\\
 &=p_{i-1,i}P^x(B_0^{r}\in A,r<N)P^x(Y_{r-1}=i-1|r-1<N).
 \end{aligned}
\end{equation}
Take $A=A_1$, then 
\[
P^x(Y_r=i,Y_{r-1}=i-1,r<N)=p_{i-1,i}P^x(r<N)P^x(Y_{r-1}=i-1|r-1<N),
\]
which implies 
\begin{equation}\label{e:3.7}
p_{i-1,i}P^x(Y_{r-1}=i-1|r-1<N)=P^x(Y_r=i,Y_{r-1}=i-1|r<N). 
\end{equation}
The equalities \eqref{e:3.6} and \eqref{e:3.7} together imply
\begin{equation}\label{e:3.8}
\begin{aligned}
 &\quad\ P^x(B_0^r\in A,Y_r=i,Y_{r-1}=i-1,r<N)\\
 &=P^x(B_0^{r}\in A,r<N)P^x(Y_r=i,Y_{r-1}=i-1|r<N). 
\end{aligned}
\end{equation}
By a same argument, 
\begin{equation}\label{e:3.9}
\begin{aligned}
&\quad\ P^x(B_0^r\in A,Y_r=i,Y_{r-1}=i+1,r<N)\\
&=P^x(B_0^{r}\in A,r<N)P^x(Y_{r}=i,Y_{r-1}=i+1|r<N).
\end{aligned}
\end{equation}
Combining \eqref{e:3.8} and \eqref{e:3.9}, we have  
\begin{equation}\label{e:3.10}
  P^x(B_0^r\in A,Y_r=i,r<N)=P^x(B_0^{r}\in A,r<N)P^x(Y_{r}=i|r<N),  
\end{equation}
which immediately gives \eqref{e:3.2}. 
\end{proof}

\noindent\textbf{Remark.} By \eqref{e:3.5}, 
\begin{equation}\label{e:3.11}
\begin{split}
P^x(B_0^r\in A,r<N)&=E^x[P^{B_0^{r-1}}(B_0^1\in A,1<N);r-1<N]
\\&=E^x[P^{B_0^{r-1}}(X(\mathcal{T}_{\partial_o S_1-\{y=0\}})\in \phi_2^{-1}(A));r-1<N].  
\end{split}
\end{equation}\medskip

Let $\omega$ be a markov kernel on $[0,3^{m}]\times \{0\}$:\[
\omega(x,A)=P^x(X(\mathcal{T}_{\partial_o S_1-\{y=0\}})\in \phi_2^{-1}(A))
\] 
for $x\in [0,3^{m}]\times \{0\}, A\subset [0,3^{m}]\times \{0\}$. \medskip

\begin{lem}\label{lemma3.6} 
For the reflected process $X_t$ starting with $x\in [0,3^{m}]\times\{0\}$, define $\bar{\eta}_r,\bar{Y}_r$ and $\bar{N}$ for $X_t$ in a same way as $\eta_r,Y_r$ and $N$ as for $W_t$. More precisely, let $\bar{\eta}_0=0,\ \bar{Y}_0=1$, and for $r\geq 1$,
\begin{align*}
&\bar{Y}_r=\begin{cases}
1\quad\hbox{ if }r\hbox{ is even},\\
2\quad\hbox{ if }r\hbox{ is odd},
\end{cases}\\
&\bar\eta_r=\inf\{t>\bar\eta_{r-1}: X_t\in \partial_oS_1\setminus A_{Y_{r-1}}\},\\
&\bar{N}=\min\{r>0:\,X_{\bar{\eta}_r}\in \Gamma\}. 
\end{align*}
Then 
\[
P^x(B_0^r\in A ,r<N)=P^x(X_{\bar{\eta}_r}\in \phi_{\bar{Y}_r}^{-1}(A),r<\bar{N}). 
\] 
\end{lem}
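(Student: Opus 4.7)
The approach is induction on $r$. The base case $r = 0$ is trivial: $B_0^0 = X_{\bar{\eta}_0} = x$ and $\phi_{\bar{Y}_0} = \phi_1 = \mathrm{id}$, so both sides equal $\mathbf{1}_{\{x \in A\}}$.

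For the inductive step, the plan is to apply \eqref{e:3.11} to the left-hand side to write
\[
P^x(B_0^r \in A,\ r < N) = E^x\bigl[f(B_0^{r-1});\ r-1 < N\bigr],
\]
where $f(z) := P^z(X(\mathcal{T}_{\partial_o S_1 - \{y=0\}}) \in \phi_2^{-1}(A))$. Since $B_0^{r-1} \in A_1$ on $\{r-1 < N\}$, the inductive hypothesis (extended from indicators to $f$ by the monotone class theorem) rewrites the display as $E^x[f(\phi_{\bar{Y}_{r-1}}(X_{\bar{\eta}_{r-1}}));\ r-1 < \bar{N}]$. On the other hand, the strong Markov property of $X$ at $\bar{\eta}_{r-1}$ yields
\[
P^x\bigl(X_{\bar{\eta}_r} \in \phi_{\bar{Y}_r}^{-1}(A),\ r < \bar{N}\bigr) = E^x\bigl[g(X_{\bar{\eta}_{r-1}});\ r-1 < \bar{N}\bigr],
\]
with $g(y) := P^y(X(\mathcal{T}_{\partial_o S_1 \setminus A_{\bar{Y}_{r-1}}}) \in \phi_{\bar{Y}_r}^{-1}(A))$ for $y \in A_{\bar{Y}_{r-1}}$. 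It thus suffices to verify $f(\phi_{\bar{Y}_{r-1}}(y)) = g(y)$ pointwise on $A_{\bar{Y}_{r-1}}$.

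The odd-$r$ case is immediate: $\bar{Y}_{r-1} = 1$ makes $\phi_{\bar{Y}_{r-1}}$ trivial, and both sides equal $P^y(X(\mathcal{T}_{\partial_o S_1 \setminus A_1}) \in \phi_2^{-1}(A))$ directly. For the even-$r$ case, where $\bar{Y}_{r-1} = 2$, the plan is to exploit the reflection symmetry of $\tmcE_0|_{S_1}$ across the diagonal $\{x_1 = x_2\}$ of $S_1$; this symmetry is inherited from the invariance of the weight pattern in Figure \ref{fig2} under swapping the two coordinates. Writing $\psi_d(x_1, x_2) = (x_2, x_1)$, a short computation shows $\psi_d = \phi_2$ on $A_2$ and $\psi_d \circ \phi_2^{-1} = \mathrm{id}$ on $A_1$. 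Combining these two identities with the $\psi_d$-invariance of $X$ transforms $f(\phi_2(y)) = P^{\psi_d(y)}(X(\mathcal{T}_{\partial_o S_1 \setminus A_1}) \in \phi_2^{-1}(A))$ into $P^y(X(\mathcal{T}_{\partial_o S_1 \setminus A_2}) \in A) = g(y)$, closing the induction.

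The principal obstacle is the even-$r$ case: the rotational folding built into the definition of $B^r$ must be composed with the diagonal reflection $\psi_d$ in order to land on the correct target set $A_{\bar{Y}_r} = A_1$. This relies on $\tmcE_0|_{S_1}$ possessing the full dihedral symmetry of the square $S_1$, a richer symmetry than the purely rotational one that is explicit in the construction of $B^r$; apart from this conceptual input, the remaining work is routine bookkeeping on the parity-driven alternation of $\bar{Y}_r$.
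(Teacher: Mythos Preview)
Your proposal is correct and follows essentially the same route as the paper: induction on $r$, using \eqref{e:3.11} for the left-hand side and the strong Markov property of $X$ for the right-hand side. The paper packages the induction via the Markov kernel $\omega$ and asserts (``it suffices to prove'') that the right-hand side equals the $r$-fold iterated integral $\int\cdots\int\omega(x,dy_1)\cdots\omega(y_{r-1},A)$, whereas you unwind the same computation directly with the functions $f$ and $g$. Your treatment of the even-$r$ case via the diagonal reflection $\psi_d$ is exactly the symmetry input the paper is tacitly using when it identifies $\omega(\phi_2(y),A)$ with the one-step transition from $y\in A_2$; you have simply made that step explicit.
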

\begin{proof}
It suffices to prove 
\[
P^x(B_0^r\in A ,r<N)=\int\cdots\int \omega(x,d y_1)\omega(y_1,d y_2)...\omega(y_{r-1},A)\quad\hbox{ for every }A\subset A_1. 
\]
We prove this by induction. For $r=1$, the claim follows from  \eqref{e:3.11} and the definition of $\omega(x,A)$. Next, for $r\geq 2$, suppose the lemma is valid for $r-1$, then for every $A\subset A_1$, by \eqref{e:3.11} and the induction, 
\begin{align*}
P^x(B_0^r\in A ,r<N)&=E^x[P^{B_0^{r-1}}(X(\mathcal{T}_{\partial_o S_1-\{y=0\}})\in \phi_2^{-1}(A));r-1<N]\\
&=\int \cdots\int P^{y_{r-1}}(X(\mathcal{T}_{\partial_o S_1-\{y=0\}})\in \phi_2^{-1}(A))\omega(x,d y_1)...\omega(y_{r-2},dy_{r-1})\\
&=\int\cdots\int \omega(x,d y_1)\omega(y_1,d y_2)...\omega(y_{r-1},A). 
\end{align*}
\end{proof}

\begin{thm}\label{theorem3.7}
There exists a constant $c>0$ independent of $x,m$, such that $$p_1(x)\geq c\qquad \hbox{ for } x\in [0,3^{m}/2]\times \{0\}.$$
\end{thm}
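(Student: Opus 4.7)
The plan is to reduce $p_1(x)$ to the reflected-process estimate in Proposition \ref{prop3.4} via the Markov-chain structure developed in Lemmas \ref{lemma3.5} and \ref{lemma3.6}, using the folding Lemma \ref{lemma3.2} at the terminal excursion.

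First I would establish a terminal-step analogue of Lemma \ref{lemma3.6}: on the event $\{N=r\}$, combining its inductive identity with the strong Markov property at $\eta_{r-1}$ and Lemma \ref{lemma3.2} applied (in the rotated frame) to the final excursion, one obtains
$$
P^x\bigl(Y_{N-1}=i,\ \phi_i(W_{\eta_N})\in L_1\bigr) \;=\; \tfrac{\mu_0(S_i)}{\mu_0(S_i)+\mu_0(S_{i-1})}\; P^x\bigl(Y_{N-1}=i,\ X_{\bar\eta_{\bar N}}\in L_1\bigr),
$$
where $\bar\eta_{\bar N}=\mathcal{T}_{L_1\cup L_2\cup L_3\cup L_4}$ is the first hit of the reflected $X$ on the outer part of $\partial S_1$. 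Summing over $i$, using that the cell-measure ratios $\mu_0(S_i)/(\mu_0(S_i)+\mu_0(S_{i-1}))$ are uniformly bounded below in $\rho$ (since the $\mu_0$-masses of adjacent level-$m$ cells differ only by factors controlled by $\rho$), together with the bound $P^x(X_{\bar\eta_{\bar N}}\in L_1)\ge 1/4$ from Proposition \ref{prop3.4}, yields
$$
\sum_{i=1}^{4}P^x\bigl(Y_{N-1}=i,\ W_{\eta_N}\in \phi_i^{-1}(L_1)\bigr) \;\ge\; c_0(\rho)>0.
$$

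Only the $i=1$ term (since $\phi_1^{-1}(L_1)=L_1$) contributes directly to $p_1(x)=P^x(W_{\eta_N}\in L_1)$; the other three terms correspond to $W$ exiting $\Gamma$ at three rotated segments $\phi_i^{-1}(L_1)\ne L_1$. To isolate the $i=1$ term I would use the local $D_4$-symmetry of the weight pattern $\rho_{2i-1}=1$, $\rho_{2i}=\rho$ (so that the four rotated configurations at the center of $D_m$ are comparable up to $\rho$-dependent factors) together with the uniform $\rho$-bounds on the transition probabilities $p_{i,i\pm 1}$ of the Markov chain $\{Y_r\}$. This makes the four terms in the sum mutually comparable up to a constant depending only on $\rho$, and in particular each is at least $c(\rho)>0$. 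The desired bound $p_1(x)\ge c$ for a uniform $c=c(\rho)>0$ then follows.

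The main obstacle is the terminal-step extension of Lemma \ref{lemma3.6}: its inductive identity is stated for $r<N$ (starting points of excursions), and one needs to track the additional folding of $S_{i-1}$ into $S_i$ via Lemma \ref{lemma3.2} at the final excursion in the rotated frame. A secondary subtlety is the comparison of the four rotated terms in the sum; when $\rho=1$, the terms are equal by $D_4$-invariance and the bound follows directly, while for $\rho\ne 1$ the measure ratios $\mu_0(S_i)/\mu_0(S_j)$ are not $1$, and the comparison requires careful tracking of the finitely many local configurations near the lattice vertex at the center of $D_m$.
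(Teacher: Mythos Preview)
Your proposal identifies the right ingredients (Lemmas~\ref{lemma3.2}, \ref{lemma3.5}, \ref{lemma3.6} and Proposition~\ref{prop3.4}) but has two genuine gaps, both of which you flag as ``obstacles'' without resolving.

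The displayed terminal-step identity is not well-posed: the right-hand side $P^x(Y_{N-1}=i,\,X_{\bar\eta_{\bar N}}\in L_1)$ mixes the $W$-measurable variable $Y_{N-1}$ with the $X$-process exit point, and these do not live on a common probability space (Lemma~\ref{lemma3.6} establishes equality of \emph{marginals}, not a coupling). More substantively, Lemma~\ref{lemma3.2} is a \emph{single-excursion} folding: from a point on $A_i$ it relates the exit of $W$ on $\partial_o(S_i\cup S_{i-1})$ to the exit of $X$ at $\bar\eta_1$, not to the full exit time $\bar\eta_{\bar N}$. There is no direct correspondence $N\leftrightarrow\bar N$, because conditionally on $\{r-1<N,\,Y_{r-1}=i\}$ the event $\{N=r\}$ depends on the pair $S_i\cup S_{i-1}$, whereas the reflected process never sees $S_{i-1}$. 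Second, the comparability of the four rotated terms fails: the filled/unfilled pattern of $S_1,\dots,S_4$ is not $D_4$-symmetric in cases~(b)--(f), so for instance in case~(b) the terms with $i=3,4$ vanish identically, and even in case~(g) the initial condition $Y_0=1$ breaks the rotational symmetry you invoke.

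The paper's proof avoids both issues by never seeking a terminal identity or a four-fold comparison. It decomposes $p_1(x)=\sum_r P^x(W_{\eta_r}\in L_1,\,N=r)$ and observes that only $Y_{r-1}\in\{1,2\}$ contributes, since $L_1\subset\partial S_1$. For each such summand, Lemmas~\ref{lemma3.2} and~\ref{lemma3.5} factor it as a measure ratio times $q_r(j)\,P(Y_{r-1}=\cdot\mid r-1<N)$, where $q_r(j)=E^x\bigl[P^{B_0^{r-1}}(X_{\bar\eta_1}\in L_j);\,r-1<N\bigr]$ for $j\in\{1,4\}$. The key step---replacing your symmetry argument---is the parity-alternated identity $\sum_r q_{2r+1}(1)+\sum_r q_{2r+2}(4)=P^x(X_{\bar\eta_{\bar N}}\in L_1)\ge 1/4$, obtained from Lemma~\ref{lemma3.6} by matching the bounce $\bar Y_r=1,2,1,2,\dots$ of the reflected chain with the labels $L_1$/$L_4$. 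The remaining factors $P(Y_{2r}=1\mid 2r<N)$ and $P(Y_{2r+1}=2\mid 2r+1<N)$ are bounded below uniformly because $Y_0=1$, $S_1$ is always unfilled, and the transition probabilities $p_{i,i\pm1}$ are bounded away from $0$ and $1$ by constants depending only on $\rho$.
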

\begin{proof}
Let 
\begin{align*}
q_r(i)=E^x[P^{B_{0}^{r-1}}(X(\mathcal{T}_{\partial_o S_1-\{y=0\}})\in L_i);r-1<N] \hbox{ for } i=1,4.
\end{align*}
Note that by Lemma \ref{lemma3.6} and the reflection symmetry, for $r\geq 0$,  
\begin{align*}
q_{2r+1}(1)&=E^x[P^{X_{\bar{\eta}_{2r}}}(X(\mathcal{T}_{\partial_o S_1-\{y=0\}})\in L_1);2r<\bar{N}]\\
&=P^x(X_{\bar{\eta}_{2r+1}}\in L_1,2r<\bar{N}),\\
q_{2r+2}(4)&=E^x[P^{\phi_2(X_{\bar{\eta}_{2r+1}})}(X(\mathcal{T}_{\partial_o S_1-\{y=0\}})\in L_4);2r+1<\bar{N}]\\
&=E^x[P^{X_{\bar{\eta}_{2r+1}}}(X(\mathcal{T}_{\partial_o S_1-\{y=0\}})\in L_1);2r+1<\bar{N}]\\
&=P^x(X_{\bar{\eta}_{2r+2}}\in L_1,2r+1<\bar{N}).
\end{align*}
Then, by Proposition \ref{prop3.4},  it holds that
\begin{equation}\label{e:3.12}
\sum_{r=0}^\infty q_{2r+1}(1)+\sum_{r=0}^\infty q_{2r+2}(4)=P^x(X_T\in L_1)\geq\frac14. 
\end{equation}

Next, we observe
\begin{align*}
&\quad\ P^x(W_{\eta_r}\in L_1,r-1<N,Y_{r-1}=2)\\
&=E^x[P^{W_{\eta_{r-1}}}(W_{\eta_1}\in L_1);r-1<N, Y_{r-1}=2]\\
&=E^x[P^{B_0^{r-1}}(B^{0}_{\mathcal{T}_{\partial_o(S_1\cup S_4)}}\in L_{13});r-1<N,Y_{r-1}=2]\\
&=E^x\Big[\frac{\mu_0(S_1)}{\mu_0(S_1)+\mu_0(S_2)}P^{B_0^{r-1}}(X(\mathcal{T}_{\partial_o S_1-\{y=0\}}\in L_4);r-1<N,Y_{r-1}=2\Big]\\
&=\frac{\mu_0(S_1)}{\mu_0(S_1)+\mu_0(S_2)}q_r(4)P(Y_{r-1}=2|r-1<N),
\end{align*}
where the third equality uses Lemma \ref{lemma3.2}, and the last equality uses Lemma \ref{lemma3.5}.
Similarly, 
\[
P^x(W_{\eta_r}\in L_1,r-1<N,Y_{r-1}=1)=\frac{\mu_0(S_1)}{\mu_0(S_1)+\mu_0(S_4)}q_r(1)P(Y_{r-1}=1|r-1<N).
\]
Summing over $r$, and noticing that $Y_{2r+1}\in \{2,4\},Y_{2r}\in \{1,3\}$ almost surely, we get 
\begin{equation*}
\begin{split} 
p_1(x)&=\sum_{r=0}^{\infty}P^x(W_{\eta_{2r+1}}\in L_1,2r<N,Y_{2r}=1)+\sum_{r=0}^{\infty}P^x(W_{\eta_{2r+2}}\in L_1,2r+1<N,Y_{2r+1}=2)\\
&=\frac{\mu_0(S_1)}{\mu_0(S_1)+\mu_0(S_4)}\sum_{r=0}^{\infty}q_{2r+1}(1)P(Y_{2r}=1|2r<N)\\
&\quad\ +\frac{\mu_0(S_1)}{\mu_0(S_1)+\mu_0(S_2)}\sum_{r=0}^{\infty}q_{2r+2}(4)P(Y_{2r+1}=2|2r+1<N)\\
&\geq C_1\big(\sum_{r=0}^\infty q_{2r+1}(1)+\sum_{r=0}^\infty q_{2r+2}(4)\big)\\
&\geq C_1/4, 
\end{split} 
\end{equation*}
for some $C_1>0$, where in the first inequality we use the fact that $\frac{\mu_0(S_1)}{\mu_0(S_4)},\frac{\mu_0(S_1)}{\mu_0(S_2)}, P(Y_{2r}=1|2r<N), P(Y_{2r+1}=2|2r+1<N)$ are uniformly bounded below by a positive constant, and the second inequality follows from \eqref{e:3.12}.
\end{proof}

\section{Harnack Inequality}\label{section4}
For $m\in\Z$ and $x\in\tilde{F}_0$, we recall that $D_m(x)$ is a block of $2\times 2$ squares, each of side length $3^{m}$, around $x$. We denote the center of $D_m(x)$ by $x_0=(x_0^{(1)},x_0^{(2)})$ and write 
\[
\partial_oD_m(x)=\partial D_m(x)\cap \mathcal{H}_m.
\] 
Let 
\[
G_m(x)=D_m(x)\cap \{z:\|z-x_0\|_{\infty}\leq 
\frac{2}{3}\cdot3^{m}\},
\] 
where $\|y\|_{\infty}=\sup \{|y^{(1)}|,|y^{(2)}|\},\ y\in \R^2$. 

In this section, We prove the following \textit{elliptic Harnack inequality} of $W_t$.

\begin{thm}\label{theorem4.1}
There is a positive constant $\theta$ so that 
\[
f(y)\leq \theta\,f(z)\qquad \hbox{ for  }y,z\in G_m(x)
\]
for every $x\in \tilde{F}_0$, $m\in\Z$, and $f\in\tmcF_0$ that is non-negative and harmonic in $D_m(x)\setminus\partial_o D_m(x)$. 
\end{thm}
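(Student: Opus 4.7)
My plan is to follow the Barlow--Bass chaining strategy. Using Theorems \ref{theorem3.3} and \ref{theorem3.7} together with the strong Markov property, the goal is to show that the harmonic measure $\omega_y := P^y(W_\tau \in \cdot)$ on $\partial_o D_m(x)$, where $\tau = \mathcal{T}_{\partial_o D_m(x)}(W)$, satisfies $\omega_y \leq C\, \omega_z$ for all $y, z \in G_m(x)$ with a universal constant $C$. The Harnack inequality will then follow from
\[
f(y) = \int_{\partial_o D_m(x)} f \, d\omega_y \;\leq\; C \int f \, d\omega_z = C f(z),
\]
using the stochastic representation of bounded $\tmcE_0$-harmonic functions from the end of Section \ref{section3}.

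\textbf{Step 1 (Hitting half-sides from $G_m(x)$).} First I would subdivide $\partial_o D_m(x)$ into its $16$ half-sides $L_1, \ldots, L_{16}$, each of length $3^m/2$, and prove that there exists $c_1 > 0$, depending only on $\rho$, such that $\omega_y(L_i) \geq c_1$ for every $y \in G_m(x)$, every $i$, and uniformly in $m \in \mathbb{Z}$ and $x \in \tilde{F}_0$. By the strong Markov property at $\sigma_0^m$, the process first enters the scale-$m$ grid $\mathcal{H}_m$ at a point still strictly inside $D_m(x)$, since $G_m(x)$ sits at distance $\geq \frac{1}{3}\cdot 3^m$ from $\partial D_m(x)$. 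From this interior grid point, a bounded chain of Knight moves (Theorem \ref{theorem3.7}) and corner moves (Theorem \ref{theorem3.3}) routes the process to each $L_i$, exploiting the dihedral $D_4$ symmetry of the weight system $\{\rho_i\}_{i=1}^8$ to treat all half-sides on equal footing.

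\textbf{Step 2 (Comparability of measures).} Next, to upgrade Step 1 to a comparison of the full Borel measures $\omega_y$ and $\omega_z$, I would iterate the argument at all finer scales $m-1, m-2, \ldots$, using the scale-invariance of Theorem \ref{theorem3.7}. At each finer scale, the comparison of conditional hitting distributions on a sub-piece reduces, via Lemma \ref{lemma3.2} applied at that scale, to ratios of the form $\mu_0(S_i)/\mu_0(S_j)$ for sub-blocks, which are uniformly controlled since within any cell $F_w$ the sub-cell weight ratios $\rho_{wj}/\rho_{wk}$ depend only on $\rho$. A monotone class argument then extends comparability from the generated dyadic algebra to all Borel $B \subset \partial_o D_m(x)$, yielding $\omega_y \leq C\, \omega_z$ with $\theta = C$.

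\textbf{Main obstacle.} The hardest step is Step 2: keeping the multiplicative constants bounded as the dyadic refinement deepens. A naive iteration would accumulate one factor per scale, giving a constant that blows up. The argument must instead leverage a telescoping: the Radon--Nikodym density $d\omega_y/d\omega_z$ restricted to a fine dyadic piece $B$ should depend only on the hitting probabilities in the \emph{single} cell directly containing $B$, via the strong Markov property at the last scale before the process enters that cell, and not on all intermediate scales. Making this telescoping precise---and in particular ensuring that the ``entrance distributions'' into cells at the final scale are themselves comparable across starting points---is the crux of the proof, and is where the interplay between the Knight move estimate and the local weight ratios must be exploited most carefully.
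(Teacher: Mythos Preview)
Your overall target---comparability of the harmonic measures $\omega_y$ and $\omega_z$ on $\partial_o D_m(x)$---is exactly right, and Step 1 is close in spirit to what the paper uses. But Step 2 is a genuine gap, and the paper avoids it entirely by a different mechanism.

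The paper does \emph{not} iterate over finer scales. Instead it proves a single-scale hitting lemma (Lemma \ref{lemma4.2}): for any continuous curve $\gamma$ joining a point of $G_m(x)$ to $\partial_o D_m(x)$, and any $z\in G_m(x)$, one has $P^z(W\ \text{hits}\ \gamma\ \text{before}\ \tau)>\delta$ for a universal $\delta>0$. This is obtained by the knight/corner moves at scale $m-2$: with probability bounded below, the process started at $z$ winds around $G_m(x)$ touching every level-$(m-2)$ grid segment that meets $\partial_o G_m(x)$, which forces it to cross $\gamma$. The Harnack inequality then follows from a martingale level-set argument: for fixed Borel $A\subset\partial_o D_m(x)$, the function $w\mapsto h_m(w,A)=P^w(W_\tau\in A)$ is harmonic, so $M_t=h_m(W_{t\wedge\tau},A)$ is a bounded martingale; for any $\eta<1$, the event $\{M_\cdot\ \text{reaches}\ \partial_oD_m(x)\ \text{before dropping below}\ \eta h_m(y,A)\}$ has positive $P^y$-probability, producing a curve $\gamma$ from $y$ to the boundary along which $h_m(\cdot,A)\geq\eta h_m(y,A)$. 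Lemma \ref{lemma4.2} then gives $h_m(z,A)\geq\delta\eta\,h_m(y,A)$ for \emph{every} Borel $A$ in one stroke, and letting $\eta\to1$ yields $\omega_y\leq\delta^{-1}\omega_z$.

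Your proposed telescoping/multi-scale refinement is precisely the step that this argument renders unnecessary. You correctly flagged that a naive iteration accumulates constants, and your sketch of how to avoid this (``entrance distributions into cells at the final scale are themselves comparable'') is circular: comparability of those entrance distributions is essentially the Harnack-type statement you are trying to prove, now at a smaller scale and on a boundary piece. The Barlow--Bass curve trick sidesteps the whole issue because the curve $\gamma$ already encodes, at scale $m$, all the information about the target set $A$ needed to transfer mass from $\omega_y$ to $\omega_z$.
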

	

\begin{figure}
	\centering
	\begin{tikzpicture}
	\node[anchor=south west,inner sep=0](image) at (0,0) {\includegraphics[width=0.4\textwidth]{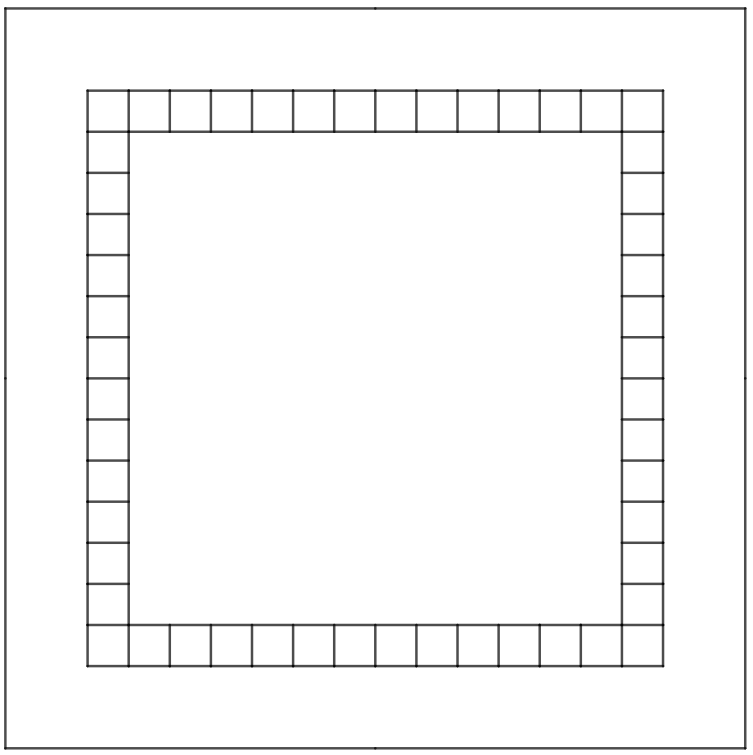}};
    \node[align=left] at (3,3) {$G_m(x)$};
    \node[align=left] at (5.4,0.44) {$D_m(x)$};
	\end{tikzpicture}
    \caption{$D_m(x)$ and $G_m(x)$.}
	\label{fig:chain of squares}
\end{figure}

Following \cite{barlow1989construction}, we use the knight move and corner move constructions to prove the elliptic Harnack inequality. Recall that $W_t$ is the diffusion process on $\tilde{F}_0$, and we set 
\[
\tau=\mathcal{T}_{\partial_o D_m(x)}. 
\]

\begin{lem}\label{lemma4.2}
There exists $\delta>0$ which is independent of $D_m(x)$ such that if $y,z\in G_m(x)$ and $\gamma (t),\ 0\leq t\leq 1$ is a continuous curve in $D_m(x)$ from $y$ to $\partial_o D_m(x)$, then 
\begin{equation}\label{e:4.1}
    P^z(W\ hits \ \gamma \   before \ \tau) >\delta.
\end{equation}
\begin{proof}
Set 
\[
\partial_oG_m(x)= \partial([x_0^{(1)}-\frac{2}{3} 3^{m},x_0^{(1)}+\frac{2}{3} 3^{m}]\times [x_0^{(2)}-\frac{2}{3} 3^{m},x_0^{(2)}+\frac{2}{3} 3^{m}])\cap \tilde{F}_0.
\]
By applying the corner move (Theorem \ref{theorem3.3}) and knight move (Theorem \ref{theorem3.7}) construction inside the squares each of side length $3^{-m-2}$, surrounding $G_m(x)$ (see Figure \ref{fig:chain of squares}) for the process starting at $W_{\mathcal{T}_{\partial_oG_m(x)}}$, and by a same idea of \cite{barlow1989construction}, there is $\delta>0$ depending only on the number of small squares such that  
\[
P^z(W\hbox{ hits every }L\in \mathcal{H}_{m+2}\hbox{ that satisfies }L\cap \partial_oG_m(x)\neq\emptyset)>\delta.
\]
Noticing that in the above event $W$ divides $D_m(x)$ into two parts, one of which contains $G_m(x)$, it holds that
\[
P^z(W\ \hbox{hits} \ \gamma \   \hbox{before} \ \tau) \geq P^z(W\hbox{ hits every }L\in \mathcal{H}_{m+2}\hbox{ that satisfies }L\cap \partial_oG_m\neq\emptyset)>\delta.
\]
\end{proof}
\end{lem}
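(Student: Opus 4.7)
The plan is to show that with probability bounded below uniformly in $m$ and $x$, the trajectory $W_{[0,\tau]}$ contains a topological loop in the annulus $D_m(x)\setminus G_m(x)$ that winds once around $G_m(x)$. Any continuous curve $\gamma$ from $y\in G_m(x)$ to $\partial_o D_m(x)$ must then meet this loop by a Jordan-type separation argument, which gives $W\cap\gamma\neq\emptyset$ before $\tau$.

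To build the loop, I fix a small integer $k$ (say $k=2$) and work with the tiling of $D_m(x)$ by the squares of $\mathcal{Q}_{m-k}$. The annulus $D_m(x)\setminus G_m(x)$ has width $\tfrac13\cdot 3^m=3^{m-1}$, which comfortably accommodates several layers of these small squares. I select a chain $(Q_1,\dots,Q_N)$ of consecutively adjacent small squares inside the annulus whose union forms a closed ring around $G_m(x)$; the integer $N$ depends only on $k$ and is independent of $m$ and $x$. By the strong Markov property applied at $\mathcal{T}_{\partial_o G_m(x)}$, it is enough to bound from below the probability of completing the loop starting from any entry point on $\partial_o G_m(x)$ lying on the boundary of some $Q_i$.

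At each step, the process must cross $Q_i$ from one prescribed side to the opposite or an adjacent side while staying within a controlled $D_{m-k}$-neighborhood around a point of $\mathcal{H}_{m-k}$. This is exactly what Theorems \ref{theorem3.3} and \ref{theorem3.7} supply, applied at scale $3^{m-k}$: each such crossing has probability at least a universal constant $c_0>0$ (the constants there depend on $\rho$ through measure ratios, but these ratios are the same at every scale by the self-similarity of the weights $\rho_w$). Iterating along the ring via the strong Markov property produces an event $E$ with $P^z(E)\geq c_0^N=:\delta$, on which $W$ has performed, in order, prescribed Knight or corner crossings of every small square in the chain while remaining inside $D_m(x)$.

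The main obstacle is the topological conclusion. On $E$, the range of $W$ up to the completion time of the loop is a connected subset of $D_m(x)$ that follows, segment by segment, the line pieces in $\mathcal{H}_{m-k}$ used by the Knight and corner move constructions. Concatenating these consecutive crossings yields a connected planar continuum that contains a simple closed curve encircling $G_m(x)$ in $\R^2$. Any continuous arc $\gamma$ running from $y\in G_m(x)$, which lies in the bounded component of the complement, to $\partial_o D_m(x)$, which lies in the unbounded component, must therefore intersect this continuum. The intricate hole structure of $\tilde{F}_0$ does not obstruct this argument because the separating loop is assembled from straight line segments of $\mathcal{H}_{m-k}$ together with diffusion paths inside individual $D_{m-k}$-neighborhoods, where planarity and connectivity are transparent. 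Identifying the precise stopping time $\sigma\leq\tau$ at which the loop is completed, and verifying that $W_{[0,\sigma]}$ genuinely contains a Jordan curve rather than merely tracing a disconnected pattern of crossings, is the delicate bookkeeping step that must be done with care.
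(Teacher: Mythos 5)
Your proposal follows essentially the same route as the paper: chain the corner-move (Theorem \ref{theorem3.3}) and knight-move (Theorem \ref{theorem3.7}) estimates over a ring of small squares two scales below $m$ surrounding $G_m(x)$, use the strong Markov property to get a uniform lower bound $c_0^N$, and conclude by a separation argument that any curve from $G_m(x)$ to $\partial_o D_m(x)$ must meet the path. The topological step you flag as delicate is treated at the same level of detail in the paper (which simply notes that on this event $W$ divides $D_m(x)$ into two parts, citing the idea of Barlow--Bass), so your argument is correct and matches the paper's proof.
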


\begin{proof}[Proof of Theorem \ref{theorem4.1}]
For every Borel subset $A\subset \partial_oD_m(x)$, set
\[
h_m(y,A)=P^y(W_{\tau}\in A)\quad\hbox{ for }y\in D_m(x)\setminus\partial_oD_m(x). 
\]
Note that $h_m(y,\bullet)$ is the hitting distribution of $W$ on $\partial_o D_m(x)$ starting from $y$.

Let $M_t=h_m(W_{t\wedge \tau},A)$, then $M_\cdot$ is a $P^y$-martingale. Obviously $0\leq M\leq 1$, and $M$ is continuous. Let $0<\eta<1$ and let 
\[
T=\inf\{t\geq 0:M_t<\eta h_m(y,A)\}\wedge \tau.
\] 
Then 
\begin{align*}
h_m(y,A)&=E^y[h_m(W_T,A)]\\
&=E^y[h_m(W_{\tau},A);T=\tau]+E^y[h_m(W_T,A);T<\tau]\\
&\leq P^y(T=\tau)+\eta h_m(y,A)P^y(T<\tau),
\end{align*}
which implies $P^y(T<\tau)\leq \frac{1-h_m(y,A)}{1-\eta h_m(y,A)}<1$ and $P^y(T=\tau)>0$. Hence, there is a curve $\gamma$ that satisfies
\[
\gamma(0)=y,\ \gamma(1)\in \partial_o D_m(x)\hbox{ and }h_m(\gamma(t),A)\geq \eta h_m(y,A) \hbox{ for any }0\leq t\leq 1.
\]
Let 
\[
S=\inf\{t\geq 0,W_t\in \gamma\}.
\] 
By Lemma \ref{lemma4.2}, for $y,z$ in $G_m(x)$, $P^z(S<\tau)>\delta$. Then 
\begin{align*}
h_m(z,A)&=E^z[h_m(W_{S\wedge \tau},A)]\\
&\geq E^z[h_m (W_S,A);S<\tau]\\
&\geq \delta \eta h_m(y,A).
\end{align*}
Letting $\eta\rightarrow 1$, we get 
\[
h_m(z,A)\geq \delta h_m(y,A).
\] 
The theorem follows, noticing that 
$f(w)=\int_{\partial_oD_m(x)}h_m(w,du)f(u)$ for $w\in D_m(x)\setminus \partial_oD_m(x)$ and non-negative $f$ that is harmonic in $D_m(x)\setminus \partial_oD_m(x)$.    
\end{proof}

\section{Resistance Estimates}\label{section5}
For $n\in \Z$, set 
$$
F_0^n=[0,3^n]^2\cap\tilde{F}_0,\hbox{ and }\partial_o F_0^n=\partial F_0^n\cap\mathcal{H}_{n}.
$$

Let 
$$
C_1=(0,0),C_2=(3^n,0),C_3=(3^n,3^n),C_4=(0,3^n)
$$
be the four vertices of $F_0^n$, and let 
$$
L_i=\overline{C_iC_{i+1}}\ \hbox{ for }i=1,2,3,4
$$
be the four border segments of $F_0^n$, where we use the cyclic notation $4+1=1$. Note that the definitions of $C_i$ and $L_i$ actually depend on $n$, but we omit the index $n$ for simplicity.

Let $R_n$ be the \textit{effective resistance} between $L_4$ and $L_2$ defined through
\begin{equation}\label{e:5.1}
R_n^{-1}=\inf\{\tmcE{_0}|_{F_0^n}(f):f|_{L_4}=0,\,f|_{L_2}=1,\, f\in C(F_0^n)\cap W^{1,2}(F_0^n)\}. 
\end{equation} 
It is known that the infimum of \eqref{e:5.1}  is attained by a unique function $ u_n$. In this section, we prove the following resistance estimates. 

\begin{thm}\label{Theorem 5.1.} There exist constants $\lambda,C>0$ such that 
\begin{equation}\label{e:5.2}
 \begin{aligned}
&C^{-1}\lambda^{n}\leq R_n\leq C \lambda^n, \qquad  &&n>0;\\
&C^{-1}\leq R_n\leq C,&&n\leq 0.
 \end{aligned}   
\end{equation}
\end{thm}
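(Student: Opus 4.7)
The plan treats $n \leq 0$ directly and establishes exponential scaling for $n > 0$ via two-sided multiplicative bounds, following the Barlow--Bass strategy adapted to the non-uniform weights.

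For $n \leq 0$, the square $F_0^n = [0, 3^n]^2$ is contained in the unit cell $[0, 1]^2$, which corresponds to the word $1 \cdots 1$ and thus has weight $\rho_{1 \cdots 1} = 1$. Hence $\mu_0|_{F_0^n}$ is Lebesgue measure on $[0, 3^n]^2$, \eqref{e:5.1} reduces to the standard Euclidean Dirichlet problem on a square, and the linear minimizer $f(x, y) = x/3^n$ gives $R_n = 1$.

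For $n > 0$, I would aim to prove the two-sided multiplicative estimate
\[
c \, R_n R_m \leq R_{n+m} \leq C \, R_n R_m, \qquad n, m > 0,
\]
with uniform constants $c, C > 0$. Applying Fekete's lemma to $\log(C R_n)$ (sub-additive) and $\log(c R_n)$ (super-additive) then forces the limit $\lambda := \lim R_n^{1/n}$ to exist and yields $R_n \asymp \lambda^n$. The structural input is the decomposition of $F_0^{n+m}$ into $8^m$ scale-$n$ sub-cells indexed by words $w$ with $|w| = m$: each is a translate of $F_0^n$ whose Dirichlet form is scaled by $\rho_w$, and they are in natural bijection with the $8^m$ unit cells of $F_0^m$. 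For the lower bound $R_{n+m} \geq c R_n R_m$ (the easier direction), build a two-scale test function $\tilde u$ by assigning boundary values on the scale-$n$ sub-cell skeleton guided by the scale-$m$ minimizer $u_m$, and within each sub-cell interpolating using a rescaled scale-$n$ minimizer $u_n$. The up--down symmetry of $u_n$ (inherited from the mirror symmetry of $F_0^n$) ensures continuity across horizontal sub-cell interfaces. The energy decomposes cell-by-cell as $R_n^{-1} \sum_w \rho_w (\Delta v_w)^2$, which the variational principle for $u_m$ bounds by $C R_n^{-1} R_m^{-1}$.

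For the upper bound $R_{n+m} \leq C R_n R_m$ (the main difficulty), the elliptic Harnack inequality (Theorem \ref{theorem4.1}) is essential. Let $u_{n+m}$ be the minimizer of $R_{n+m}^{-1}$, harmonic in the interior. On each scale-$n$ sub-cell $w$, Theorem \ref{theorem4.1} applied to $u_{n+m} - \min u_{n+m}$ on a slight enlargement (where it is still harmonic) shows that $\mathrm{osc}_w(u_{n+m})$ is comparable to the range of $u_{n+m}$ over $w$ together with its immediate neighbors. Combining with a Poincar\'e-type coercivity $\tilde{\mathcal{E}}_0|_{\text{cell }w}(u_{n+m}) \geq c \, \rho_w \, \mathrm{osc}_w(u_{n+m})^2 / R_n$, each cell's energy contribution is bounded below; summing and viewing $\{\mathrm{osc}_w\}$ as a discrete potential on the scale-$m$ cell graph with boundary values $0$ on the left column and $1$ on the right, the scale-$m$ discrete resistance yields $\sum_w \rho_w \, \mathrm{osc}_w^2 \geq c'/R_m$, hence $R_{n+m}^{-1} \geq c''/(R_n R_m)$. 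I expect the main obstacle to be the uniform propagation of the Harnack oscillation bound across all $8^m$ sub-cells; this is precisely where the scale-uniformity of Theorem \ref{theorem4.1} becomes crucial.
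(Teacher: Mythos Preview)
Your treatment of $n\leq 0$ and the overall Fekete strategy are fine, and your lower bound sketch (test-function construction) is essentially the paper's Lemma~\ref{lemma5.10}. But your plan for the upper bound $R_{n+m}\leq C R_n R_m$ diverges from the paper and, as written, does not go through.

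The paper never invokes the Harnack inequality in Section~\ref{section5}. Instead it proves a dual (Thompson-type) characterization of $R_n$ via flows (Theorem~\ref{theorem5.4}), then for the upper bound constructs an explicit flow on $F_0^{n+m}$ by routing a discrete optimal flow $I_m^G$ on a crosswire graph $G_m$ through rescaled copies of the optimal flow $I_n$ in each level-$n$ sub-cell (Lemma~\ref{lemma5.7}); the crosswire resistance $R_m^G$ is then compared to a diagonal-graph resistance $R_m^D$ (Lemma~\ref{lemma5.8}), which is tied back to $R_m$ through the potential construction of Lemma~\ref{lemma5.10} with $n=0$. Everything is purely variational; Harnack plays no role.

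Your Harnack route has two concrete gaps. First, the ``Poincar\'e-type coercivity'' $\tmcE_0|_{w}(u_{n+m})\geq c\,\rho_w\,\mathrm{osc}_w(u_{n+m})^2/R_n$ is not a valid inequality: $R_n$ is the resistance between opposite \emph{faces}, whereas the oscillation of a harmonic function on a planar cell can be realized near a corner or along a short boundary arc whose capacity is much smaller than $R_n^{-1}$ (points have zero capacity in two dimensions). There is no universal lower bound of this form. Second, the step ``view $\{\mathrm{osc}_w\}$ as a discrete potential with boundary values $0$ on the left and $1$ on the right'' is ill-posed: oscillations are nonnegative and do not satisfy those boundary constraints, nor do they behave additively along paths the way a potential does. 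What \emph{can} be pushed through without flows is to record suitable \emph{face} or \emph{vertex} traces of $u_{n+m}$ (not cell oscillations) and compare the energy on each sub-cell to a face-to-face resistance; but making that precise forces you to introduce exactly the discrete networks $G_m$ or $D_m$ and ultimately reproduces the paper's argument. If you want to salvage the upper bound, drop Harnack and work directly with the flow dual as in Subsections~\ref{section51}--\ref{section52}.
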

\noindent\textbf{Remark.} When $n\leq 0$, $F_0^n$ is a square and $\tilde{\mcE}{_0}|_{F_0^n}(f)=\int_{F_0^n}|\nabla f|^2dx$, hence $R_n=1$. So we only need to prove Theorem \ref{Theorem 5.1.} for $n>0$. \medskip

\subsection{Resistance and flows on $F_0^n$}\label{section51}

Before proceeding, we provide an alternative variational characterization of the resistance $R_n$ in terms of flows. 

Firstly, we introduce the notion of $BV$ functions. Let $U$ be a bounded open set in $\R^2$ with Lipschitz boundary. A function $f\in L^1(U)$ is said to be of \textit{bounded variation} on $U$ if 
\begin{align*}
    \sup\{\int_U f\ \mathrm{ div}\varphi\ dx|\,\varphi\in { C^1_c(U;\R^2)},|\varphi|\leq1\}<\infty.
\end{align*}
We denote by $BV(U)$ the space of functions of bounded variation on $U$. Note that the weak  partial derivatives of a $BV$ function $f$ are Radon measures. 


We now introduce the notion of  flows. Let $\sigma$ denote the $1$-dimensional Hausdorff measure. For a vector field $J=(J_1,J_2)$ so that $J_i\in BV(F_0^n)$, we say $J$ is $matching$ on $F_0^n$, if for any $Q_1,Q_2\in \mathcal{Q}_0(F_0^n)\hbox{ with}\ \sigma(Q_1\cap Q_2)>0$, $J$ satisfies 
\[
J\cdot \nu_1=-J\cdot \nu_2,\qquad \hbox{$\sigma$-a.e. on } Q_1\cap Q_2,
\]
where $\nu_1,\nu_2$ are the outer normals to $\partial Q_1$ and $\partial Q_2$ respectively. We call a vector field $J=(J_1,J_2)$ on $F_0^n$ a \textit{flow between $L_4$ and $L_2$} if
\begin{equation}\label{e:5.6}
\begin{aligned}
    &(a).\  J_i\in BV(F_0^n),\qquad i=1,2,\\
    &(b).\ \mathrm{div} J=0\ \hbox{in}\ Q,\qquad \hbox{for each } Q\in \mathcal{Q}_0(F_0^n),\qquad\qquad\qquad\qquad\qquad\qquad\qquad\qquad\\
    &(c).\ J\cdot \nu=0\ \hbox{on}\ \partial F_0^n-(L_2\cup L_4),\\
    &(d).\ J\hbox{ is matching on } F_0^n.
\end{aligned}
\end{equation}

\begin{prop}\label{proposition 5.3}
Let $J$ be a flow between $L_4$ and $L_2$, and $u$ be a feasible function for \eqref{e:5.1}. Then 
\[
\int_{F_0^n}\nabla u\cdot Jdx=\int_{L_2}J\cdot \nu d\sigma.
\]
\end{prop}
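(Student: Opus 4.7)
The strategy is cell-by-cell integration by parts, letting the matching condition (d) collapse the sum of boundary fluxes across interior edges, and letting the flux condition (c) together with the boundary data of $u$ isolate $L_2$. Decompose $F_0^n=\bigcup_{Q\in\mathcal{Q}_0(F_0^n)}Q$. On each unit square $Q$ the distributional divergence of $J$ vanishes by (b), so writing $\nabla u\cdot J=\mathrm{div}(uJ)-u\,\mathrm{div}\,J=\mathrm{div}(uJ)$ and applying a Gauss--Green formula for divergence-measure $BV$ vector fields paired with $u\in W^{1,2}(Q)\cap C(Q)$ gives
\[
\int_Q \nabla u\cdot J\,dx\;=\;\int_{\partial Q} u\,(J\cdot\nu_Q)\,d\sigma,
\]
where $\nu_Q$ is the outer unit normal on $\partial Q$.

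Summing over $Q\in\mathcal{Q}_0(F_0^n)$ produces the identity $\int_{F_0^n}\nabla u\cdot J\,dx=\sum_Q\int_{\partial Q}u(J\cdot\nu_Q)\,d\sigma$. An interior edge $E=Q_1\cap Q_2$ with $\sigma(E)>0$ is traversed once by each cell; condition (d) forces $J\cdot\nu_1=-J\cdot\nu_2$ $\sigma$-a.e. on $E$, and the continuity of $u$ across $E$ makes the two contributions equal and opposite. What remains are the boundary segments lying on $\partial F_0^n$, which splits into the outer segments $L_1,L_2,L_3,L_4$ together with the inner boundaries of the holes of $F_0^n$. By (c) the flux $J\cdot\nu$ vanishes on $L_1$, $L_3$, and on every hole boundary; the trace of $u$ is $0$ on $L_4$ and $1$ on $L_2$. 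Hence only the $L_2$ term survives, giving the claimed identity.

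The main technical point is justifying the Gauss--Green step on a single cell $Q$ when $J$ is only $BV$. This falls within Anzellotti's theory of pairings between bounded divergence-measure fields and Sobolev functions, but in our very concrete setting it can be obtained directly: because $\mathrm{div}\,J=0$ on $Q$, the normal trace of $J$ on $\partial Q$ is well defined in a dual pairing with $H^{1/2}(\partial Q)$, and by approximating $u\in W^{1,2}(Q)$ with smooth functions $u_k$ having the same $H^{1/2}$-trace on $\partial Q$ one passes to the limit in the classical divergence theorem for $u_k J_k$ (with $J_k$ a standard mollification of $J$ inside $Q$). The minor additional verification that the interior-edge cancellation survives these approximations uses only the continuity of $u$ and that the normal traces of the mollifications of $J$ converge in the appropriate weak sense along each $L\in\mathcal{H}_m$.
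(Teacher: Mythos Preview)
Your proof is correct and follows essentially the same route as the paper: apply the Gauss--Green formula on each cell $Q\in\mathcal{Q}_0(F_0^n)$, sum, use the matching condition (d) to cancel interior edges, and use (c) together with $u|_{L_4}=0$, $u|_{L_2}=1$ to isolate the $L_2$ term. The only difference is that the paper dispatches the Gauss--Green step in one line by citing the version for $BV$ vector fields in Maz'ya's book \cite[Section 9.6.5, p.~506]{maz2013sobolev}, whereas you spell out an approximation argument in the spirit of Anzellotti; either justification is fine here.
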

\begin{proof}
By the Gauss-Green formula \cite[Section 9.6.5, p506]{maz2013sobolev}, for $Q\in \mathcal{Q}_0(F_0^n)$,
\[
\int_{Q}\nabla u\cdot J dx+\int_{Q}u\ \mathrm{div} Jdx=\int_{\partial Q}u(J\cdot \nu) d\sigma. 
\]
Summing over $Q$ in $\mathcal{Q}_0(F_0^n)$, we have
\begin{align*}
\int_{F_0^n}\nabla u\cdot J dx&=\sum_{Q\in\mathcal{Q}_0(F_0^n)} \int_{Q}\nabla u\cdot J dx\\
&=\sum_{Q\in \mathcal{Q}_0(F_0^n)}\int_{\partial Q}u(J\cdot \nu) d\sigma{=\int_{L_2\cup L_4}u(J\cdot \nu)d\sigma}=\int_{L_2}J\cdot \nu d\sigma.
\end{align*}
\end{proof}
\medskip

For vector value functions $I,J$ on $F_0^n$, set 
\[
{ E_n}(I,J)=\sum_{Q\in \mathcal{Q}_0(F_0^n)}(\mu_0(Q))^{-1}\int_{Q}I\cdot J\  dx,
\]
and write $E_n(I)=E_n(I,I)$ for short.

\begin{thm}\label{theorem5.4}
Let $u_n$ be the unique function that minimizes \eqref{e:5.1}, and let 
\[
I_n=R_n\sum_{Q\in \mathcal{Q}_0(F_0^n)}\mu_0(Q)  \mathds{1}_Q\nabla u_n.
\]
Then,  
\begin{equation}\label{e:5.7}
R_n=\inf\{E_n(J)|J \hbox{ is a flow between $L_4$ and $L_2$ with } \int_{L_2}J\cdot \nu d\sigma=1\},  
\end{equation}
with $I_n$ being the unique flow minimizing \eqref{e:5.7}.
\end{thm}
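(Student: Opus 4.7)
The plan is a standard convex-duality argument: use Cauchy--Schwarz combined with Proposition \ref{proposition 5.3} to bound $E_n(J)$ from below by $R_n$ for any admissible flow $J$, and then recognize $I_n$ as the flow that achieves the bound.

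First I would verify that $I_n$ is itself a flow between $L_4$ and $L_2$ with unit flux. The Euler--Lagrange equation for the minimizer $u_n$ of \eqref{e:5.1} states $\tmcE_0|_{F_0^n}(u_n,\phi)=0$ for all $\phi\in W^{1,2}(F_0^n)$ vanishing on $L_2\cup L_4$. Testing against $\phi\in C_c^\infty(\operatorname{int} Q)$ gives $\Delta u_n=0$ in each $Q$, hence $\operatorname{div} I_n|_Q=0$ (condition (b) in \eqref{e:5.6}). Testing $\phi$ supported near $L_1\cup L_3$ yields the Neumann condition $\partial_\nu u_n=0$ there, so $I_n\cdot\nu=0$ on $\partial F_0^n\setminus(L_2\cup L_4)$ (condition (c)). Testing $\phi$ supported across an interior edge shared by adjacent cells $Q_1,Q_2\in \mathcal{Q}_0(F_0^n)$, and integrating by parts on each side, produces
\[
\mu_0(Q_1)\partial_{\nu_1}u_n|_{Q_1}+\mu_0(Q_2)\partial_{\nu_2}u_n|_{Q_2}=0,
\]
which is precisely the matching condition (d) for $I_n$. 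The BV requirement (a) follows from standard elliptic regularity of $u_n$ on each cell with mixed boundary data.

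Next, I would combine Proposition \ref{proposition 5.3} with the identity $\tmcE_0|_{F_0^n}(u_n)=R_n^{-1}$ to compute simultaneously the flux and the energy of $I_n$:
\[
\int_{L_2}I_n\cdot\nu\,d\sigma=\int_{F_0^n}\nabla u_n\cdot I_n\,dx=R_n\sum_{Q\in\mathcal{Q}_0(F_0^n)}\mu_0(Q)\int_Q|\nabla u_n|^2\,dx=R_n\cdot R_n^{-1}=1,
\]
and analogously $E_n(I_n)=R_n^2\cdot R_n^{-1}=R_n$. This shows $I_n$ is admissible in \eqref{e:5.7} and delivers the upper bound $\inf_J E_n(J)\le R_n$. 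For the matching lower bound, take any admissible flow $J$, apply Proposition \ref{proposition 5.3} with $u=u_n$, split the integral into cells, and use the pointwise Cauchy--Schwarz on each $Q$ followed by the discrete Cauchy--Schwarz over the cells:
\[
1=\int_{F_0^n}\nabla u_n\cdot J\,dx\leq\Bigl(\sum_{Q}\mu_0(Q)\int_Q|\nabla u_n|^2\,dx\Bigr)^{1/2}\Bigl(\sum_{Q}\mu_0(Q)^{-1}\int_Q|J|^2\,dx\Bigr)^{1/2}=R_n^{-1/2}E_n(J)^{1/2},
\]
so $E_n(J)\geq R_n$. For uniqueness, equality in both Cauchy--Schwarz steps forces $J|_Q=\lambda\mu_0(Q)\nabla u_n$ pointwise on each $Q$ with the same constant $\lambda\geq 0$ across all cells; the flux normalization $\int_{L_2}J\cdot\nu\,d\sigma=1$ then pins down $\lambda=R_n$, giving $J=I_n$.

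The main obstacle will be rigorously justifying that $I_n\in BV(F_0^n)$ and that the passage from the weak Euler--Lagrange equation to the pointwise flux-matching identity (d) is valid despite possible mild corner singularities of $u_n$ at interior cell vertices and at the four points of $\partial F_0^n$ where Dirichlet and Neumann data meet. Once integration by parts is phrased carefully against test functions in $W^{1,2}(F_0^n)$ and the elliptic regularity of $u_n$ inside each $Q$ is invoked, the remaining arithmetic is routine.
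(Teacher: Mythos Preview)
Your proposal is correct and follows the paper's argument almost exactly: both verify that $I_n$ is an admissible unit flow (you rederive the matching condition, which the paper isolates as the lemma \eqref{e:5.3} immediately preceding the theorem), and both compute $\int_{L_2}I_n\cdot\nu\,d\sigma=1$ and $E_n(I_n)=R_n$ via Proposition~\ref{proposition 5.3}.

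The one genuine difference is in the lower bound. You use Cauchy--Schwarz; the paper instead observes directly from Proposition~\ref{proposition 5.3} that for any admissible $J$,
\[
E_n(I_n,J)=R_n\int_{F_0^n}\nabla u_n\cdot J\,dx=R_n\int_{L_2}J\cdot\nu\,d\sigma=R_n=E_n(I_n),
\]
whence $E_n(J)-E_n(I_n)=E_n(J-I_n)\geq 0$. This orthogonal-projection version gives uniqueness in one stroke (equality forces $E_n(J-I_n)=0$, hence $J=I_n$), whereas your route requires tracking the equality cases in two Cauchy--Schwarz inequalities. Both are standard proofs of Thomson's principle; the paper's is marginally cleaner, yours is perhaps more familiar. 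Your regularity concern about $I_n\in BV$ is legitimate but the paper simply asserts (a)--(c) hold without further comment.
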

Before proving this theorem, we first state a lemma concerning $u_n$. For a bounded open set $U$ in $\R^2$ with Lipschitz boundary and $f\in W^{1,1}(U)$, we denote by $\Tr f$ the trace of $f$ on $\partial U$.\medskip

\begin{lem}
Let $u_n$ be the function attaining \eqref{e:5.1}. Let $Q_1,Q_2\in \mathcal{Q}_0(F_0^n)$ with $\sigma(Q_1\cap Q_2)>0,$ and denote by $\nu_i$ the outer normal to $\partial Q_i$ for $i=1,2$. Then 
\begin{equation}\label{e:5.3}
\frac{\Tr (\nabla u_n|_{Q_1})\cdot \nu_1}{\Tr(\nabla u_n|_{Q_2})\cdot \nu_2}=-\frac{\mu_0(Q_2)}{\mu_0(Q_1)},\qquad \sigma\hbox{-a.e. on }\ Q_1\cap Q_2.
\end{equation}
\end{lem}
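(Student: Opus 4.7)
The identity \eqref{e:5.3} is the natural Euler-Lagrange transmission condition across an interface where the density of $\mu_0$ with respect to Lebesgue measure jumps; the strategy is to derive it from the first variation of the energy, with the weights $\mu_0(Q_i)$ entering precisely because the Dirichlet form weights cell $Q_i$ by $\mu_0(Q_i)$. The key preliminary observation is that, by the self-similarity of $\mu_0$, on each unit cell $Q\in\mathcal{Q}_0(F_0^n)$ the measure $\mu_0$ coincides with $\mu_0(Q)$ times Lebesgue measure (recall $|Q|=1$), so the energy decomposes as
\[
\tmcE_0|_{F_0^n}(f) = \sum_{Q\in\mathcal{Q}_0(F_0^n)} \mu_0(Q)\int_Q |\nabla f|^2\,dx.
\]

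The first step is to record the first variation: since $u_n$ minimizes \eqref{e:5.1}, for every $\varphi\in C_c^\infty(F_0^n\setminus(L_2\cup L_4))$,
\[
\sum_{Q\in\mathcal{Q}_0(F_0^n)} \mu_0(Q)\int_Q \nabla u_n\cdot\nabla\varphi\,dx=0.
\]
Localizing $\varphi$ inside a single cell $Q$ shows that $u_n$ is weakly harmonic on $\mathrm{int}(Q)$, hence classically harmonic and smooth by elliptic regularity. Standard boundary regularity for harmonic functions on the Lipschitz domain $Q$ then gives meaning to the normal traces $\Tr(\nabla u_n|_Q)\cdot\nu$ on each edge of $Q$ and validates the divergence-theorem computation below.

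The next step is to localize the first variation to the interface. Fix $e:=Q_1\cap Q_2$ with $\sigma(e)>0$, and choose $\varphi\in C_c^\infty(U)$ for a small neighborhood $U$ of an interior point of $e$ satisfying $U\cap F_0^n\subset Q_1\cup Q_2$ and $U\cap(\partial Q_1\cup\partial Q_2)\subset e$ (so in particular $U$ avoids $L_2\cup L_4$). Integration by parts on each $Q_i$, combined with harmonicity of $u_n$ on $\mathrm{int}(Q_i)$, yields
\[
\int_{Q_i}\nabla u_n\cdot\nabla\varphi\,dx = \int_e \bigl(\Tr(\nabla u_n|_{Q_i})\cdot\nu_i\bigr)\varphi\,d\sigma,
\]
and substituting into the first-variation identity gives
\[
\int_e\Bigl(\mu_0(Q_1)\,\Tr(\nabla u_n|_{Q_1})\cdot\nu_1+\mu_0(Q_2)\,\Tr(\nabla u_n|_{Q_2})\cdot\nu_2\Bigr)\varphi\,d\sigma=0
\]
for all such $\varphi$. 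Density of admissible test functions on compact subsets of the interior of $e$ forces the bracketed quantity to vanish $\sigma$-a.e.\ on $e$, which is exactly \eqref{e:5.3} after rearrangement.

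The routine parts are the first variation and the flux-matching computation; the only step that needs genuine care is the rigorous justification of the normal traces and Green's identity for $u_n|_{Q_i}$. Once one observes that $u_n$ is harmonic (hence interior-smooth) on each cell and that each cell is a square (a Lipschitz domain), this follows from standard trace theory for $W^{1,2}$ solutions of elliptic equations, so I do not anticipate a serious obstacle here.
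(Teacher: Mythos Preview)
Your proof is correct and follows essentially the same approach as the paper's: both derive the transmission condition from the first-variation identity $\tmcE_0|_{F_0^n}(u_n,\varphi)=0$ by localizing to a neighborhood of the interface, applying the Gauss--Green formula on each adjacent cell (using interior harmonicity of $u_n$), and concluding that the weighted normal fluxes cancel $\sigma$-a.e. The only cosmetic difference is that the paper tests against cut-off functions $f_r$ equal to $1$ on an arbitrary subinterval $A\subset Q_1\cap Q_2$ and sends $r\to 0$, whereas you test against arbitrary $\varphi$ and invoke density; these are equivalent.
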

\begin{proof}Since $u_n$ is harmonic in $F_0^n$, 
\[
\tmcE_0(u_n,f)=0,\qquad \hbox{for all}\ f\in C_{c}^{\infty}(F_0^n).
\]  
It suffices to prove for any open interval $A\subset Q_1\cap Q_2$, 
\begin{equation}\label{e:5.4}
   \int_{A}(\mu_0(Q_1)\Tr (\nabla u_n|_{Q_1})\cdot \nu_1+\mu_0(Q_2)\Tr (\nabla u_n|_{Q_2})\cdot \nu_2)\ d\sigma=0. 
\end{equation}
Now take $f_r$ be the cut-off function such that $f_r=1$ on $\overline{A}$, $0\leq f_r\leq1$, and $f_r$ is supported in $A_r$, the $r$-neighborhood of $A$ in $Q_1\cup Q_2$. By Gauss-Green formula, 
\begin{align*}
0=\tmcE{_0}|_{F_0^n}(u_n,f_r)&=\mu_0(Q_1)\int_{Q_1}\nabla u_n\cdot\nabla f_rdx+\mu_0(Q_2)\int_{Q_2}\nabla u_n\cdot\nabla f_rdx\\
&=\mu_0(Q_1)\int_{\partial Q_1}f_r\,\Tr (\nabla u_n|_{Q_1})\cdot \nu_1\,d\sigma+\mu_0(Q_2)\int_{\partial Q_2}f_r\,\Tr (\nabla u_n|_{Q_2})\cdot \nu_2\,d\sigma.
\end{align*}
Letting $r\to 0$, the above equality tends to \eqref{e:5.4}.
\end{proof}
\medskip

\begin{proof}[Proof of Theorem \ref{theorem5.4}]
One can easily check that $I_n$ satisfies (a)-(c) in \eqref{e:5.6}. By \eqref{e:5.3}, for $Q_1,Q_2\in \mathcal{Q}_0(F_0^n)\hbox{ with }\sigma(Q_1\cap Q_2)>0$, $\Tr(I_n|_{Q_1})\cdot \nu_1=-\Tr(I_n|_{Q_2})\cdot \nu_2$, so $I_n$ is matching on $F_0^n$. Thus $I_n$ is a flow between $L_4$ and $L_2$. By Proposition \ref{proposition 5.3}, 
\[
\int_{L_2}I_n\cdot \nu d\sigma=\int_{F_0^n}\nabla u_n\cdot I_n dx
=R_n\sum_{Q\in \mathcal{Q}_0(F_0^n)}\int_{Q}\mu_0(Q)  \nabla u_n\cdot \nabla u_ndx=R_n\tmcE{_0}|_{F_0^n}(u_n)=1.
\]
So $I_n$ is feasible for the right of \eqref{e:5.7}. On the other hand, 
\begin{equation*}
    \begin{aligned}
E_n(I_n)&=R_n^2\sum_{Q\in \mathcal{Q}_0(F_0^n)}(\mu_0(Q))^{-1}\int_{Q}|\mu_0(Q)\nabla u_n|^2 dx.\\
&=R_n^2\tmcE_0|_{F_0^n}(u_n)=R_n.
    \end{aligned}
\end{equation*}

Finally, Let $J$ be a flow between $L_4$ and $L_2$ with $\int_{L_2}J\cdot \nu d\sigma=1$, then by Proposition \ref{proposition 5.3}, 
\[
E_n(I_n,J)=R_n\int_{F_0^n}\nabla u_n\cdot Jdx=R_n\int_{L_2}J\cdot \nu d\sigma=R_n.
\]
So $E_n(I_n,J)=E_n(I_n)$, hence $$E_n(J)-E_n(I_n)=E_n(I_n-J)\geq 0.$$
\end{proof}
\medskip

\subsection{Upper bound estimate of $R_n$}\label{section52}
In this subsection, we estimate the upper bound of $R_n$ by constructing a flow on $F_0^n$. 

Let $\overline{C}=(\frac{1}{2}3^n,\frac{1}{2}3^n)$, and let $T_i$ be the intersection of $F_0^n$ with the triangle with vertices $C_i,C_{i+1},\overline{C}$. Denote by $I_n$ the optimum flow for \eqref{e:5.7}, and write $I_n^{42}=I_n$ to emphasize that the flow is from $L_4$ to $L_2$. Define $I_n^{13},I_n^{24},I_n^{31}$ by counterclockwise rotating  $I_n^{42}$ by $\frac{\pi}{2}, \pi,\frac{3\pi}{2}$  respectively. Define 
\[
U^i=I_n^{(i+2)i}|_{T_i},\quad V^i=I^{(i+1)(i-1)}_n|_{T_i},
\]
where we use the cyclic notations $1-1=4$, $4+1=1$, $3+2=1$ and $4+2=2$ for simplicity. 
Clearly, 
\[
R_n=2E_n|_{T_i}(U^i)+2E_n|_{T_i}(V^i)\qquad \hbox{ for all }i.
\] 
Now, define 
\[
I_n^{41}=U^1-U^4+V^2+V^3,
\] 
and similarly  define the remaining $I_n^{i(i+1)}$, $I_n^{(i-1)i}$ by rotations and reflections.

Let $C_i^{\prime}=\frac{1}{2}(C_i+C_{i+1})$. Let $H$ be a real value function on $\{C_i^{\prime}\overline{C}:\,i=1,2,3,4\}$, and denote $H_i=H(C_i^{\prime}\overline{C})$ with $\sum_i H_i=0$. Let 
\[
E_+(H)=\frac{1}{2}\sum_i H_i^2
\]
be the energy dissipation of $H$ in the crosswire. Write $h=\frac{1}{2}\sum_i|H_i|$. Let 
\begin{equation}\label{e:5.8}
J=\sum_i\sum_{j\neq i}h^{-1}H_i^{+}H_j^{-}I_n^{ij},
\end{equation}
where $H_i^{+},H_i^{-}$ denote the positive and negative parts of $H_i$.

\begin{lem}\label{lemma5.6}
(1). The total flux of $J$ through edge $L_i$ is $H_i,\ i=1,2,3,4$ i.e. 
\[
\int_{L_i}J\cdot \nu d\sigma=H_i. 
\]
(2). $E_n(J)\leq R_n E_+(H)$.
\begin{proof}
The proof is same as that of Proposition 3.2 in \cite{barlow1990resistance}, basing on a direct calculation, noticing that $E_n|_{T_i}(U_i,V_i)=0$ for all $i$ and $\sum_{i=1}^4H_i^2$.
\end{proof}
\end{lem}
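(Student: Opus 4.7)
The plan is to verify both parts by direct calculation, the main inputs being the flux properties of the building-block flows $I_n^{ij}$ (a consequence of the fact that each is a rigid-motion image of $I_n = I_n^{42}$) and the orthogonality $E_n|_{T_i}(U^i, V^i) = 0$ within each triangle, which comes from the reflection symmetry of $\mu_0$.

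For Part (1), I would observe that $I_n^{ij}$ carries unit flux from $L_i$ to $L_j$: its total flux through $L_a$ is $+1$ if $a = j$, $-1$ if $a = i$, and $0$ otherwise (with respect to the outer normal). Inserting this into the definition of $J$, the flux through a fixed $L_k$ splits into the two pieces indexed by $i = k$ and by $j = k$. Using the identities $\sum_i H_i^+ = \sum_i H_i^- = h$ (a consequence of $\sum H_i = 0$) together with $H_k^+ H_k^- = 0$, the two contributions
\[
-h^{-1}H_k^+\sum_{j\neq k}H_j^- \quad\text{and}\quad h^{-1}H_k^-\sum_{i\neq k}H_i^+
\]
collapse to $\pm H_k$ in a single step; the sign is determined by the convention chosen for $H_i$.

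For Part (2), the plan is to expand bilinearly
\[
E_n(J) = h^{-2}\sum H_i^+ H_j^- H_{i'}^+ H_{j'}^-\, E_n(I_n^{ij}, I_n^{i'j'}),
\]
and then decompose each inner product as $\sum_k E_n|_{T_k}(I_n^{ij}|_{T_k}, I_n^{i'j'}|_{T_k})$. On each triangle $T_k$, the restriction $I_n^{ij}|_{T_k}$ is a sum of (rotated or reflected copies of) $U^k$ and $V^k$, as illustrated by $I_n^{41} = U^1 - U^4 + V^2 + V^3$ and its rotational/reflective analogues. The orthogonality $E_n|_{T_k}(U^k, V^k) = 0$, which I would deduce from the reflection of $T_k$ fixing $\overline{C}$ (under which $U^k$ is symmetric, $V^k$ is antisymmetric, and $\mu_0$ is invariant because the weights $\rho_w$ respect the full $D_4$-symmetry of $F$), kills all cross terms between symmetric and antisymmetric components. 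The surviving diagonal terms, combined with $R_n = 2E_n|_{T_i}(U^i) + 2E_n|_{T_i}(V^i)$ and the identity $(H_i^+)^2 + (H_i^-)^2 = H_i^2$, yield $E_n(J) \leq R_n \cdot \tfrac{1}{2}\sum_i H_i^2 = R_n E_+(H)$.

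The main obstacle is purely combinatorial: tracking the $4 \times 12$ restrictions $I_n^{ij}|_{T_k}$, identifying which rigid motion carries which edge-pair to which, and verifying signs in the $U^k/V^k$ decomposition so that the cancellation from the orthogonality leaves precisely the $\sum H_i^2$ right-hand side. This is essentially the bookkeeping of \cite[Proposition~3.2]{barlow1990resistance}. Importantly, the non-uniform weight $\rho$ enters only through the factors $\mu_0(Q)$ in the definition of $E_n$ and does not disturb the reflection symmetry of $T_k$ that drives the key orthogonality, so the argument transfers with no essential modification.
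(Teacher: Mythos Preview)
Your proposal is correct and follows essentially the same approach as the paper, which simply refers to \cite[Proposition~3.2]{barlow1990resistance} and highlights the orthogonality $E_n|_{T_i}(U^i,V^i)=0$; you have unpacked precisely that argument, correctly noting that the non-standard weight $\rho$ preserves the $D_4$-symmetry needed for the key orthogonality.
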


Next, we introduce the discrete energies on certain graphs. Let $Q$ be a square in $\mathcal{Q}_{m}$ for some $0\leq m\leq n$, we write $\Phi_{n,Q}$ to be the orientation preserved linear map such that $\Phi_{n,Q}$ maps $[0,3^n]^2$ to $Q$. 
Set \begin{align*}
&X_{0}^{(n)}=\{(\frac{1}{2}3^{n},\frac{1}{2}3^{n})\},\ Y_{0}^{(n)}=\{(\frac{1}{2}3^{n},0),(3^{n},\frac{1}{2}3^{n}),(\frac{1}{2}3^{n},3^{n}),(0,\frac{1}{2}3^{n})\},\\
&Z_{0}^{(n)}=\{(0,0),(0,3^{n}),(3^{n},3^{n}),(3^{n},0)\},\\
&X_n=\bigcup_{Q\in \mathcal{Q}_0(F_0^n)}   \Phi_{n,Q}(X_0^{(n)}),\quad Y_n=\bigcup_{Q\in \mathcal{Q}_0(F_0^n)}\Phi_{n,Q}(Y_0^{(n)}),\\
&Z_n=\bigcup_{Q\in \mathcal{Q}_0(F_0^n)}\Phi_{n,Q}(Z_0^{(n)}),\\
&G_n=X_n\cup Y_n,\ D_n=X_n\cup Z_n,
\end{align*}
and denote
\begin{enumerate}[a)]
	\item[(a).] $x\sim_G y, \hbox{ if } x\in X_n,\ y\in Y_n \hbox{ and } |x-y|={\frac{1}{2}}$;  
	
	\item[(b).] $x\sim_D z, \hbox{ if } x\in X_n,z\in Z_n \hbox{ and } |x-z|={\frac{1}{\sqrt{2}}}$.
\end{enumerate}
Define a weight function $g_n$ on $G_n\times G_n$ by
\[
g_n(x,y)=
\begin{cases}
 \mu_0(Q), &\hbox{ if } x\sim_G y, \hbox{ $x,y\in Q$ for some $Q\in \mathcal{Q}_0(F_0^n)$}.\\
 0, &\hbox{ otherwise},
\end{cases}
\]   
and a weight function $d_n$ on $D_n\times D_n$ by 
\[
d_n(x,y)=
\begin{cases}
 \mu_0(Q), &\hbox{ if } x\sim_D y,\hbox{ $x,y\in Q$ for some $Q\in \mathcal{Q}_0(F_0^n)$}.\\
 0, &\hbox{ otherwise}.
\end{cases}
\]   

Let $(\mathbb{V},\sim, g)$ be a finite weighted graph, where $g$ is the weight function on $\V\times\V$. For a function $f$ on $\V$, define its \textit{(discrete) energy} on $(\V,\sim,g)$ by 
\[
\mathcal{E}_{\V}(f)=\frac12\sum_{x,y\in\V}g(x,y)(f(x)-f(y))^2.
\] 
Note that 

\begin{equation*}
\begin{aligned}
&\mathcal{E}_{G_n}(f)={\frac12}\sum_{x,y\in G_n}g_n(x,y)(f(x)-f(y))^2,\\    
&\mathcal{E}_{D_n}(f)={\frac12}\sum_{x,y\in D_n}d_n(x,y)(f(x)-f(y))^2
\end{aligned}    
\end{equation*}
for functions $f$ on $G_n$ or $D_n$.

For nonempty $B_0,B_1\subset \V$ with $B_0\cap B_1=\emptyset$, we define the \textit{(discrete) effective resistance} between $B_0$ and $B_1$ through
\begin{equation}\label{5}
(R_\V(B_0,B_1))^{-1}=\inf\{\mathcal{E}_{\V}(f)|f:\V\rightarrow\R,\ f|_{B_i}=i,\ i=0,1\}.
\end{equation}

A map $J:\V\times \V\rightarrow\R$ is called a \textit{discrete flow}, if it satisfies:
\begin{align*}
    &{ J(x,y)}=0\qquad\quad\quad\ \hbox{ if } x\nsim y,\\
    &J(x,y)=-J(y,x)\quad\ \hbox{for all }x,y.
\end{align*}
By \cite[Theorem 2.31 p53]{barlow2017random}, we have
\begin{equation}\label{6}
R_{\V}(B_0,B_1)=\inf\{E_\V(J)|J\in \mathscr{I}(B_0,B_1)\}. 
\end{equation}
Here $E_\V(J)=\frac12\sum\limits_{x,y\in\V}g(x,y)^{-1}J(x,y)^2$ is the \textit{discrete energy} of the flow $J$, and $\mathscr{I}(B_0,B_1)$ is the collection of flows $J$ satisfying: 
\begin{align*}
   &\mathrm{Div } J(x)=0,\qquad x\in \V-(B_0\cup B_1),\\
   &\hbox{Flux}(J;B_0)=1,\\
   &\hbox{Flux}(J;B_1)=-1,
\end{align*}
where
\begin{align*}
&\mathrm{Div }J(x):=\sum_{ y\in \V:x\sim y}J(x,y),\\
&\hbox{Flux}(J;A):=\sum_{ x\in A}\mathrm{Div }J(x)\ \hbox{ for }x\in A.
\end{align*}
For more details, we refer to \cite{barlow2017random}.
Write 
\begin{align*}
&R_n^G=R_{G_n}(L_4\cap G_n,L_2\cap G_n)\hbox{ on }(G_n,\sim_G,g_n),\\
&R_n^D=R_{D_n}(L_4\cap D_n,L_2\cap D_n)\hbox{ on }(D_n,\sim_D,d_n).
\end{align*}
Let $u_n^G$ ($u_n^D$) and $I_n^G$ ($I_n^D$) be the optimum potential and optimum flow with respect to \eqref{5} and \eqref{6} for $R_n^G$ ($R_n^D$) respectively.\medskip

\begin{lem}\label{lemma5.7} 
	$R_{n+m}\leq R_n R_m^G$ for $n,m\geq0$. 
\end{lem}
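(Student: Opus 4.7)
The plan is to apply the flow characterization in Theorem~\ref{theorem5.4} by constructing a continuous flow on $F_0^{n+m}$ that subordinates the optimal discrete flow on $G_m$ to the block-wise construction of Lemma~\ref{lemma5.6}.

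First, I would decompose $F_0^{n+m}$ into $3^n$-scale blocks $\{B_Q\}_{Q\in\mathcal{Q}_0(F_0^m)}$, where $B_Q$ is the translated copy of $F_0^n$ corresponding to $Q$. By the self-similarity of $\mu_0$, writing $w_Q \in \mathcal{S}_m$ for the word encoding $Q$, one has $\mu_0(P) = \rho_{w_Q}\rho_v$ for every unit square $P \subset B_Q$ with sub-position $v \in \mathcal{S}_n$; hence under the natural translation identifying $B_Q$ with $F_0^n$, $E_{n+m}|_{B_Q}(\cdot) = \rho_{w_Q}^{-1} E_n(\cdot)$. Next, take the optimal discrete flow $I_m^G$ on $G_m$ with unit flux and $E_{G_m}(I_m^G) = R_m^G$, and for each $Q$ record the four crosswire flow values $H_i^Q := I_m^G(x_Q, y_i^Q)$, which satisfy $\sum_i H_i^Q = 0$ by discrete conservation at $x_Q$. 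Inside each block, apply Lemma~\ref{lemma5.6} to produce a continuous flow $J^Q$ with boundary flux $H_i^Q$ through each side and energy $E_n(J^Q) \leq R_n E_+(H^Q) = \tfrac{R_n}{2}\sum_i (H_i^Q)^2$, and set $J|_{B_Q} = J^Q$.

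The next step is to verify that $J$ is a valid flow on $F_0^{n+m}$. Conditions (a)--(c) of \eqref{e:5.6} are inherited from the block-wise construction: divergence-freeness on each unit square follows from Lemma~\ref{lemma5.6}, and the outer boundary condition follows from $H_i^Q = 0$ for outer midpoints of $F_0^m$ outside $L_2 \cup L_4$. The matching condition (d) is more delicate: intra-block matching is guaranteed by Lemma~\ref{lemma5.6}, but matching along each shared $3^n$-segment between adjacent blocks $B_{Q_1}$ and $B_{Q_2}$ has to be verified separately. This should follow by combining discrete conservation $H_i^{Q_1} = -H_j^{Q_2}$ at the shared midpoint with the rotational/reflectional symmetry of the basic flows $I_n^{ij}$ underlying the Lemma~\ref{lemma5.6} construction, so that the adjacent boundary densities agree pointwise up to sign along the shared edge. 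I expect this inter-block matching to be the main technical obstacle.

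Granting the flow property, the energy computation gives
\begin{equation*}
E_{n+m}(J) \;=\; \sum_Q \rho_{w_Q}^{-1} E_n(J^Q) \;\leq\; \frac{R_n}{2} \sum_Q \rho_{w_Q}^{-1} \sum_i (H_i^Q)^2 \;=\; \frac{R_n}{2} E_{G_m}(I_m^G) \;=\; \frac{R_n}{2} R_m^G,
\end{equation*}
using $E_{G_m}(I_m^G) = \sum_Q \sum_i \rho_{w_Q}^{-1}(H_i^Q)^2$. Since $J$ carries unit flux from $L_4$ to $L_2$, Theorem~\ref{theorem5.4} then yields $R_{n+m} \leq \tfrac{1}{2} R_n R_m^G \leq R_n R_m^G$.
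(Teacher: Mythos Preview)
Your proposal is correct and follows essentially the same route as the paper: decompose $F_0^{n+m}$ into $3^n$-blocks indexed by $\mathcal{Q}_0(F_0^m)$, read off the four crosswire values $H_i^Q$ from the optimal discrete flow $I_m^G$, apply Lemma~\ref{lemma5.6} in each block, glue, check matching via discrete conservation at the shared midpoints, and bound the energy by Theorem~\ref{theorem5.4}. Your explicit bookkeeping of the weight factor $\rho_{w_Q}$ (which the paper leaves implicit in the passage from $E_{n+m}|_Q$ to $R_n E_{G_m}|_Q$) and the resulting sharper bound $R_{n+m}\le \tfrac12 R_n R_m^G$ are correct; the paper simply drops the harmless factor $\tfrac12$.

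One remark on the point you flag as the main obstacle: inter-block matching is indeed pointwise (the paper's condition (d) is $\sigma$-a.e.), so equality of total fluxes $H_i^{Q_1}=-H_j^{Q_2}$ is not by itself enough. What closes the argument is that the normal trace of the Lemma~\ref{lemma5.6} flow on any side $L_k$ is exactly $H_k\cdot\phi$, where $\phi$ is the fixed boundary profile of $I_n$ on its exit face; this follows because every basic flow $I_n^{ij}$ has normal trace $\pm\phi$ on its entry/exit sides (by the $D_4$-symmetry of $\mu_0$ on $F_0^n$ and the construction of $I_n^{i(i\pm1)}$ from triangular pieces of rotated $I_n$), and the coefficients in \eqref{e:5.8} combine to $H_k$. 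Since $\phi$ is symmetric about the midpoint, the two parameterizations of the shared edge agree. The paper's one-line justification (``since $\mathrm{Div}\,\tilde I_m^G(y)=0$, $J_{n+m}$ is matching'') relies on exactly this fact without stating it, so your sketch is at the same level of rigor; you may want to make the fixed-profile observation explicit.
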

\begin{proof}
We prove the lemma by constructing a flow on $F_0^{n+m}$ with $I_m^G$ and $I_n$. 

Let $Q\in {\mathcal{Q}_{n}(F_0^{n+m})}$, then $Q\cap F_0^{n+m}$ is a copy of $F_0^n$. Define $\tilde{I}_m^G:(3^nG_m\times 3^nG_m)\rightarrow\R$ by
\[
\tilde{I}_m^G(3^nx,3^ny)=I_m^G(x,y)\qquad \hbox{ for } (x,y)\in G_m\times G_m\hbox{ such that }x\sim_Gy.
\] 
Denote by $\tilde{I}_m^G|_Q$ the restriction of $\tilde{I}_m^G$ on $Q\cap  (3^nG_m)$ for $Q\in \mathcal{Q}_{n}(F_0^{n+m})$, and set 
$H=\tilde{I}_m^G|_Q\circ \Phi_{n,Q}$.  
Let $H_i=H(C_i^{\prime}\bar{C})$ and $J_Q=J\circ \Phi_{n,Q}^{-1}:Q\rightarrow\R^2$, where $J$ is defined as in \eqref{e:5.8}.
Define 
\[
J_{n+m}=\sum_{Q\in \mathcal{Q}_{n}(F_0^{n+m})}J_Q.
\]
We can easily check $J_{n+m}$ satisfies \eqref{e:5.6} except (d). If $Q_1,Q_2\in \mathcal{Q}_{n}$ so that $(Q_1\cap Q_2)\cap(3^nG_m)\neq\emptyset$, take $y\in (Q_1\cap Q_2)\cap(3^nG_m)$ and $x_i\in (3^n X_m)\cap Q_i$. Since $\mathrm{Div }\tilde{I}_m^G(y)=0$ i.e. $\tilde{I}_m^G(x_1,y)+\tilde{I}_m^G(x_2,y)=0$, $J_{n+m}$ is matching on the border $Q_1\cap Q_2$. So (d) holds, and $J_{n+m}$ is a flow on $F_0^{n+m}$.

Also we have 
\[
\int_{L_2}J_{n+m}\cdot \nu d\sigma=\sum_{Q\in \mathcal{Q}_{n}(F_0^{n+m})\atop Q\cap L_2\neq \emptyset}\int _{Q\cap L_2}J_{n+m}\cdot \nu d\sigma=1,
\]
where the last equality follows from Lemma \ref{lemma5.6}-(1) and $\hbox{Flux}(I_m^G,L_2\cap G_m)=1$. So $J_{n+m}$ is feasible for \eqref{e:5.7}. Hence by Lemma \ref{lemma5.6}-(2),
\begin{equation*}
\begin{aligned}
E_{n+m}(J_{n+m})&=\sum_{Q\in \mathcal{Q}_{n}(F_0^{n+m})}E_{n+m}|_{Q\cap F_0^{n+m}}(J_Q)\\
&\leq\sum_{Q\in \mathcal{Q}_{0}(F_0^{m})}R_n E_{G_m}|_{Q\cap G_m}(I_m^G|_Q)\\
&=R_n E_{G_m}(I_m^G)=R_n R_m^G.    
\end{aligned}
\end{equation*}
The statement then follows from Theorem \ref{theorem5.4}.
\end{proof}

\begin{lem}\label{lemma5.8} 
	There exists a positive constant $c$ such $c^{-1}R_m^G\leq R_m^D\leq R_m^G$ for $m\geq0$.
\end{lem}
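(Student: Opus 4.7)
The plan is to prove the two inequalities separately using the variational characterization \eqref{5} of the effective resistance, by constructing a feasible test function for one graph from a feasible test function for the other.

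For the upper bound $R_m^D \leq R_m^G$: given any feasible $v$ for $R_m^D$ (so $v|_{L_4 \cap D_n} = 0$, $v|_{L_2 \cap D_n} = 1$), define $\tilde v \colon G_n \to \mathbb{R}$ by $\tilde v(x_Q) = v(x_Q)$ on centers and
\[
\tilde v(y) = \tfrac12\bigl(v(z_1) + v(z_2)\bigr)
\]
on each midpoint $y \in Y_n$, where $z_1, z_2$ are the endpoints of the side of $Q$ containing $y$. A midpoint on $L_4$ (resp.\ $L_2$) lies on a side whose two endpoint corners both belong to $L_4 \cap Z_n$ (resp.\ $L_2 \cap Z_n$), so $\tilde v$ satisfies the $G$-boundary conditions. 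The elementary inequality $\left(\tfrac{a+b}{2}\right)^2 \leq \tfrac12(a^2 + b^2)$ yields
\[
\bigl(\tilde v(x_Q) - \tilde v(y)\bigr)^2 \leq \tfrac12\bigl[(v(x_Q) - v(z_1))^2 + (v(x_Q) - v(z_2))^2\bigr].
\]
Multiplying by $\mu_0(Q)$ and summing over all cells $Q$ and their four midpoints (each corner of $Q$ being an endpoint of exactly two sides of $Q$) gives $\mathcal{E}_{G_n}(\tilde v) \leq \mathcal{E}_{D_n}(v)$; taking infima yields $R_m^D \leq R_m^G$.

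For the lower bound $R_m^G \leq c R_m^D$: given any feasible $u$ for $R_m^G$, set $\tilde u(x_Q) = u(x_Q)$ on centers, $\tilde u \equiv 0$ on $L_4 \cap Z_n$, $\tilde u \equiv 1$ on $L_2 \cap Z_n$, and for each interior corner $z$ shared by cells $Q_1, \ldots, Q_k \in \mathcal{Q}_0(F_0^n)$, choose $\tilde u(z)$ as the weighted average minimizing $\sum_i \mu_0(Q_i)\bigl(u(x_{Q_i}) - \tilde u(z)\bigr)^2$. The resulting contribution of $z$ to $\mathcal{E}_{D_n}(\tilde u)$ is
\[
\Bigl(\sum_k \mu_0(Q_k)\Bigr)^{-1} \sum_{i<j} \mu_0(Q_i)\mu_0(Q_j)\bigl(u(x_{Q_i}) - u(x_{Q_j})\bigr)^2.
\]
For each pair $(Q_i, Q_j)$ meeting at $z$, I bound $(u(x_{Q_i}) - u(x_{Q_j}))^2$ as follows: if $Q_i$ and $Q_j$ share an edge with midpoint $y \in Y_n$, use $(u(x_{Q_i}) - u(x_{Q_j}))^2 \leq 2[(u(x_{Q_i}) - u(y))^2 + (u(y) - u(x_{Q_j}))^2]$, which is controlled by two $G$-edge energy terms; if $Q_i, Q_j$ are diagonal (meeting only at a $4$-way corner $z$), route through a uniformly bounded chain of edge-adjacent cells in $\mathcal{Q}_0(F_0^n)$ surrounding $z$ and bound by a sum of $G$-edge energies along the chain. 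Boundary corners on $L_4 \cup L_2$ are handled analogously, using that each adjacent cell has a $G$-midpoint on the boundary edge where $u$ takes the value $0$ or $1$. Since each $G$-edge contributes to at most $O(1)$ corner terms, summing gives $\mathcal{E}_{D_n}(\tilde u) \leq c\,\mathcal{E}_{G_n}(u)$ for some uniform $c > 0$, and the result follows.

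The main obstacle is the diagonal case at $4$-way corners: one must show that any two cells in $\mathcal{Q}_0(F_0^n)$ sharing a corner are connected by an edge-adjacent chain in $\mathcal{Q}_0(F_0^n)$ of length bounded uniformly in $m$ and the location of the corner. This is a combinatorial property of the standard Sierpi\'nski carpet, following from the fact that the missing cells at each scale appear as isolated $3^{j-1} \times 3^{j-1}$ central blocks surrounded by full rings of cells at the same scale; a short case analysis by the scale of any removed intermediate cell suffices, and guarantees the $O(1)$ detours required above.
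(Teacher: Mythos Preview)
Your proof is correct and follows the same strategy as the paper's: for $R_m^D\le R_m^G$, both define the $G$-potential by setting midpoints to the average of the two adjacent corner values and use convexity; for $R_m^G\le cR_m^D$, both set corner values to the $\mu_0$-weighted average of neighbouring centre values and bound centre--centre differences by chains of centre--midpoint $G$-edges.

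One simplification worth noting: the diagonal ``obstacle'' you isolate never actually arises for the standard Sierpi\'nski carpet. A short base-$3$ digit argument shows that if the unit cells $(i+1,j)$ and $(i,j+1)$ are both absent from $F_0^m$, then so is at least one of $(i,j)$ and $(i+1,j+1)$; equivalently, the cells of $\mathcal{Q}_0(F_0^m)$ meeting at any corner $z$ are always connected by edge-adjacency \emph{within the cells at $z$}. The paper uses this implicitly in the step $(a_{Q'}-a_Q)^2\le\sum_{Q_1,Q_2}(a_{Q_1}-a_{Q_2})^2$ leading to \eqref{e:5.9}. Your detour argument through a surrounding ring is therefore unnecessary here, though it would be the right idea for carpets lacking the non-diagonality property.
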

\begin{proof}
First, we show the right-hand inequality by constructing a feasible potential $f$ on $G_m$, using the the optimum potential $u_m^D$ for $R_m^D$. Let 
\begin{align*}
&f(x)=u_m^D(x)\qquad\qquad\qquad\quad   \hbox{ for }x\in X_m,\\
&f(y)=\frac{1}{2}(u_m^D(z_1)+u_m^D(z_2))\quad\hbox{ for }y\in Y_m,
\end{align*}
where in the second line, we choose $z_1,z_2\in Z_m$ so that $y=(z_1+z_2)/2$ and $y,z_1,z_2\in Q$ for some $Q\in \mathcal{Q}_0(F_0^m)$. 

Now, for each $Q\in \mathcal{Q}_0(F_0^m)$, we
denote by $a_1,a_2,a_3,a_4$ the values of $u_m^D$ at the four corners of $Q$ and by $\overline{a}={\frac{1}{4}}\sum_{i=1}^4a_i$ the value of $u_m^D$ at the center of $Q$, then, by the definition of $f$,
\begin{align*}
\mcE_{G_m}|_{Q\cap G_m}(f)&=\mu_0(Q)\sum_{i=1}^4(\frac{a_i+a_{i+1}}{2}-\overline{a})^2\\
&\leq \mu_0(Q)\sum_{i=1}^4(a_i-\overline{a})^2=\mathcal{E}_{D_m}|_{Q\cap D_m}(u_m^D). 
\end{align*}
Summing the above inequality over $Q\in \mathcal{Q}_0(F_0^m)$ implies
\[
(R_m^G)^{-1}\leq\mcE_{G_m}(f)\leq \mcE_{D_m}(u_m^D)=(R_m^D)^{-1},
\] 
so that $R_m^D\leq R_m^G$.

Next, we show the left-hand inequality by constructing a feasible potential $h$ on $D_m$, using the optimum potential $u_m^G$ for $R_m^G$. Let $h:D_m\rightarrow \R$ by 
\begin{equation*}
    \begin{aligned}
&h(x)=u_m^G(x),\ &&x\in X_m,\\
&h(z)=\sum_{x\sim_Dz}\frac{d_m(x,z)}{\sum_{x\sim_D z}d_m(x,z)}u_m^G(x),\qquad &&z\in Z_m-(L_2\cup L_4),\\
&h(z)=0,\ &&z\in L_4,\\
&h(z)=1,\ &&z\in L_2.
    \end{aligned}
\end{equation*} 
 
For $Q\in \mathcal{Q}_0(F_0^m)$, we write $x_Q$ for the center of $Q$; for $Q,Q'\in\mathcal{Q}_0(F_0^m)$ such that $\sigma(Q\cap Q')>0$, we write $y_{Q\cap Q'}$ for the middle point of $Q\cap Q'$. Also, denote 
\[
a_Q=u_m^G(x_Q)\ \hbox{ and }\ b_{Q\cap Q'}=u_m^G(y_{Q\cap Q'})
\]
for short. Then, for each $Q\in \mathcal{Q}_0(F_0^m)$ and $z\in Q\cap Z_m$, we claim that 
\begin{equation}\label{e:5.9}
(h(x_Q)-h(z))^2\leq 16\sum_{Q_1,Q_2\in \mathcal{Q}_0(F_0^m)\atop \sigma(Q_1\cap Q_2)>0,z\in Q_1\cap Q_2}(a_{Q_1}-b_{Q_1\cap Q_2})^2.
\end{equation}
The claim is trivial if $z\in L_2\cup L_4$. If $z\notin L_2\cup L_4$, by the definition of $h(z)$, 
\begin{align*}
(h(z)-h(x_Q))^2&=(\sum_{Q'\in\mathcal{Q}_0(F_0^m):\,z\in Q'}\frac{d_m(x_{Q^{\prime}},z)}{\sum_{Q'\in\mathcal{Q}_0(F_0^m):z\in Q'}d_m(x_{Q^{\prime}},z)}(a_{Q'}-a_Q))^2\\
&\leq\sum_{Q'\in\mathcal{Q}_0(F_0^m):\,z\in Q'}\frac{d_m(x_{Q^{\prime}},z)}{\sum_{Q'\in\mathcal{Q}_0(F_0^m):z\in Q'}d_m(x_{Q^{\prime}},z)}(a_{Q'}-a_Q)^2\\
&\leq \sum_{Q'\in\mathcal{Q}_0(F_0^m):\,z\in Q'}(a_{Q'}-a_Q)^2\\
&\leq 16\sum_{Q_1,Q_2\in \mathcal{Q}_0(F_0^m)\atop \sigma(Q_1\cap Q_2)>0,z\in Q_1\cap Q_2}(a_{Q_1}-b_{Q_1\cap Q_2})^2,
\end{align*}
where in the last inequality, we use the observation
\begin{align*}
(a_{Q'}-a_Q)^2&\leq \sum_{Q_1,Q_2\in \mathcal{Q}_0(F_0^m)\atop \sigma(Q_1\cap Q_2)>0,z\in Q_1\cap Q_2}(a_{Q_1}-a_{Q_2})^2\\
&\leq 2\sum_{Q_1,Q_2\in \mathcal{Q}_0(F_0^m)\atop \sigma(Q_1\cap Q_2)>0,z\in Q_1\cap Q_2}\big((a_{Q_1}-b_{Q_1\cap Q_2})^2+(b_{Q_1\cap Q_2}-a_{Q_2})^2\big)\\
&=4\sum_{Q_1,Q_2\in \mathcal{Q}_0(F_0^m)\atop \sigma(Q_1\cap Q_2)>0,z\in Q_1\cap Q_2}(a_{Q_1}-b_{Q_1\cap Q_2})^2. 
\end{align*}

Finally, we see that \eqref{e:5.9} implies 
\[
\mcE_{D_m}|_{Q\cap D_m}(h)\leq c_1\sum_{Q'\in\mathcal{Q}_0(F_0^m):Q'\cap Q\neq\emptyset}\mcE_{G_m}|_{Q^{\prime}\cap G_m}(u_m^G)
\]
for some constant $c_1>0$. So by taking the summation over $Q\in \mathcal{Q}_0(F_0^m)$,
\begin{align*}
(R_m^D)^{-1}\leq\mcE_{D_m}(h)=\sum_{Q\in\mathcal{Q}_0(F_0^m)}\mcE_{D_m}|_{Q\cap D_m}(h)
\leq 8c_1 \mcE_{G_m}(u_m^G)=8c_1 (R_m^G)^{-1}
\end{align*}
gives $R_m^G\leq8c_1\,R_m^D$. 
\end{proof}

\subsection{Lower bound estimate of $R_n$}\label{section53}
Let $u_n$ be the optimum potential in \eqref{e:5.1}. Let $u_n^i,\ 1\leq i\leq 4$, be the rotation of $u_n$ by $\frac{(i-1)\pi}{2}$ counterclockwise. Define function $v_i,w_i$ on $T_i$ by
\[
v_i=u_n^{i+1}|_{T_i},\ w_i=u_n^i|_{T_i}.
\]
Clearly, $v_i(C_i)=v_i(C_{i+1})=0$; $w_i(C_i)=0,\ w_i(C_{i+1})=1$; $v_i(\overline{C})=w_i(\overline{C})=\frac{1}{2}$; and $R_n^{-1}=2\tmcE_0|_{T_i}(v_i)+2\tmcE_0|_{T_i}(u_i)$. The following lemma is same as Lemma 4.2 in \cite{barlow1990resistance}.

\begin{lem}$\tmcE_0|_{T_i}(v_i,w_i)=0,\ 1\leq i\leq 4$.
\end{lem}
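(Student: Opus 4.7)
The plan is to deduce $\tmcE_0|_{T_i}(v_i,w_i)=0$ by a symmetrization argument mirroring Lemma 4.2 of \cite{barlow1990resistance}. By the rotational symmetry of how the $v_i,w_i$ are defined it suffices to treat $i=1$. Let $\tau$ denote the reflection of $\mathbb{R}^2$ across the perpendicular bisector of $L_1$, i.e.\ the vertical line $\{x_1=3^n/2\}$. Then $\tau$ is an isometry with $\tau(T_1)=T_1$, and it swaps the pair $(L_2,L_4)$ while fixing each of $L_1$ and $L_3$ as a set.

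The first step is to verify that $\tmcE_0|_{T_1}$ is $\tau$-invariant. This reduces to $\mu_0\circ\tau=\mu_0$ on $F_0^n$, which in turn reduces to the equality $\rho_w=\rho_{w'}$ for every word $w\in\mathcal{S}_n$, where $w'$ is obtained by replacing each letter of $w$ under the horizontal reflection within the parent cell (the bijection $1\leftrightarrow 3$, $5\leftrightarrow 7$, $4\leftrightarrow 8$, fixing $2$ and $6$). Since every exchanged pair of letters carries the same weight in the definition of $\rho$ from Section \ref{section2}, we obtain $\rho_w=\rho_{w'}$, and $\tau$ becomes a measure-preserving isometry of $T_1$. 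Consequently $\tmcE_0|_{T_1}(f\circ\tau,g\circ\tau)=\tmcE_0|_{T_1}(f,g)$ for all $f,g\in\tmcF_0|_{T_1}$.

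The second step is to identify the symmetries of the potentials. Because $\tau$ interchanges $L_2$ and $L_4$, the function $1-u_n\circ\tau$ is admissible for \eqref{e:5.1} and has the same energy as $u_n$; uniqueness of the optimum therefore forces $u_n\circ\tau=1-u_n$, and hence $w_1\circ\tau=1-w_1$ on $T_1$. Similarly, $u_n^2$ has boundary data $0$ on $L_1$ and $1$ on $L_3$, both $\tau$-invariant sets, so the analogous uniqueness argument gives $u_n^2\circ\tau=u_n^2$ and thus $v_1\circ\tau=v_1$.

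The conclusion is then immediate. Since constants contribute nothing to the form,
\[
\tmcE_0|_{T_1}(v_1,w_1)=\tmcE_0|_{T_1}\bigl(v_1,\,w_1-\tfrac12\bigr).
\]
Applying $\tau$-invariance to the right-hand side, together with $v_1\circ\tau=v_1$ and $(w_1-\tfrac12)\circ\tau=-(w_1-\tfrac12)$, gives
\[
\tmcE_0|_{T_1}\bigl(v_1,\,w_1-\tfrac12\bigr)=-\tmcE_0|_{T_1}\bigl(v_1,\,w_1-\tfrac12\bigr),
\]
which forces the quantity to vanish. The only delicate point is the bookkeeping for the $\tau$-invariance of $\mu_0$ under the non-standard weights; once that is in hand, the lemma reduces to the usual even/odd decomposition trick.
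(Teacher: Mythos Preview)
Your proposal is correct and follows essentially the same approach as the paper, which simply defers to Lemma~4.2 of \cite{barlow1990resistance}; you have correctly supplied the one extra verification needed here, namely that the non-standard weight $\mu_0$ is invariant under the reflection $\tau$ (and under the $\pi/2$ rotation used to reduce to $i=1$), after which the even/odd argument goes through verbatim.
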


For a function $\psi$ on $X_n^{(0)}\cup Z_n^{(0)}$ and harmonic at the center, let $z_i=\psi(C_i),\ i=1,2,3,4$ and $\overline{z}=\frac{1}{4}\sum_{i=1}^4 z_i$. Define $f_n^{\psi}:F_0^n\rightarrow \R$ by
\begin{equation}\label{e:5.10}
f_n^{\psi}=\sum_{i=1}^4[z_i+(2\overline{z}-z_i-z_{i+1})v_i+(z_{i+1}-z_i)w_i]\mathds{1}_{T_i}.
\end{equation}
By symmetry, it is easy to check that $f_n^{\psi}$ agrees with $\psi$ on $X_0^{(n)}\cup Z_0^{(n)}$, and can be continuously extended to $\bigcup_{i=1}^4C_i \overline{C}$ so that $f_n^{\psi}\in W^{1,2}(F_0^n)$.  Then 
\begin{equation}\label{e:5.11}
\begin{aligned}
\tmcE{_0}|_{F_0^n}(f_n^{\psi})&=\sum_{i=1}^4[((z_{i+1}-\overline{z})+(z_{i}-\overline{z}))^2\tmcE_0|_{T_i}(v_i)+(z_{i+1}-z_i)^2\tmcE_0|_{T_i}(w_i)]\\
&\leq 2 R_n^{-1}\sum_{i=1}^4(z_i-\bar{z})^2.
\end{aligned}
\end{equation}

\begin{lem}\label{lemma5.10} 
	$\frac{1}{2}R_n R_m^D\leq R_{n+m}$ for $n,m\geq 0$.
\end{lem}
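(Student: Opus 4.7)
The plan is to lower bound $R_{n+m}$ by exhibiting a feasible test function $f$ in the variational problem \eqref{e:5.1} whose Dirichlet energy on $F_0^{n+m}$ is at most $2 R_n^{-1} (R_m^D)^{-1}$. I would construct $f$ by lifting the optimal discrete potential $u_m^D$ on $D_m$ to $F_0^{n+m}$ cell-by-cell using the elementary harmonic patches packaged in \eqref{e:5.10}.

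For each $n$-cell $Q \in \mathcal{Q}_n(F_0^{n+m})$ with associated word $w(Q) \in \mathcal{S}_m$, let $T_Q : F_0^n \to Q$ be the natural translation. Reading off the values of $u_m^D$ (rescaled by $3^n$) at the four corners of $Q$ and at its center yields a function $\psi_Q$ on $X_0^{(n)} \cup Z_0^{(n)}$. Because every center $x_{Q'} \in X_m$ lies interior to $F_0^m$ and is linked in $D_m$ to its four corner neighbors by the identical weight $d_m = \mu_0(Q')$, the minimizing property of $u_m^D$ forces $u_m^D(x_{Q'}) = \tfrac14 \sum_i u_m^D(z_i^{Q'})$, so $\psi_Q$ is harmonic at the center in the sense required by \eqref{e:5.10}. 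Then define $f|_Q := f_n^{\psi_Q} \circ T_Q^{-1}$ and glue across cells.

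The crucial step is to verify that $f$ is continuous across shared cell boundaries and satisfies $f|_{L_4 \cap F_0^{n+m}} = 0$, $f|_{L_2 \cap F_0^{n+m}} = 1$. Because $v_i$ is the restriction to $T_i$ of a rotation of $u_n$ whose zero-boundary $L_4$ is sent to $L_i$, I note the strengthening $v_i \equiv 0$ on the entire edge $L_i$, not merely at its endpoints. Hence on $L_i$ the formula \eqref{e:5.10} collapses to $f_n^{\psi_Q}|_{L_i}(s) = z_i + (z_{i+1} - z_i)\, u_n|_{L_1}(s)$, depending only on the two shared corner values and on the universal profile $u_n|_{L_1}$. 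The reflection antisymmetry $u_n(3^n - x, y) = 1 - u_n(x, y)$---forced by the uniqueness of $u_n$ and the symmetry of the pre-carpet---yields $u_n|_{L_1}(s) + u_n|_{L_1}(1-s) = 1$, which exactly compensates the orientation reversal when an adjacent cell shares the same edge. On the outer boundary $L_4$ of $F_0^{n+m}$ all relevant corner values $z_i$ vanish since $u_m^D = 0$ on $L_4 \cap D_m$, so $f|_{L_4} = 0$, and symmetrically $f|_{L_2} = 1$.

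For the energy bound, the self-similarity of $\mu_0$ gives $\tmcE_0|_Q(f|_Q) = \rho_{w(Q)} \tmcE_0|_{F_0^n}(f_n^{\psi_Q})$, and \eqref{e:5.11} controls the right-hand side by $2 \rho_{w(Q)} R_n^{-1} \sum_{i=1}^4 (\psi_Q(C_i) - \psi_Q(\overline{C}))^2$. Summing over $Q \in \mathcal{Q}_n(F_0^{n+m})$ and identifying $\rho_{w(Q)}$ with $\mu_0(Q')$ for the corresponding $Q' \in \mathcal{Q}_0(F_0^m)$ (both equal $\rho_w$ since $\mu_0(F_0^0) = 1$) matches the sum cell-by-cell with the discrete energy on $D_m$, so that
\[
R_{n+m}^{-1} \leq \tmcE_0|_{F_0^{n+m}}(f) \leq 2 R_n^{-1}\, \mathcal{E}_{D_m}(u_m^D) = 2 R_n^{-1} (R_m^D)^{-1},
\]
which rearranges to $R_{n+m} \geq \tfrac{1}{2} R_n R_m^D$. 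The main obstacle I anticipate is the continuity check across cell boundaries: both the fact that $v_i$ vanishes along the whole edge $L_i$ (a strengthening of what is explicitly recorded in the paper) and the antisymmetry of the boundary profile $u_n|_{L_1}$ needed to absorb the orientation flip between neighboring cells must be made fully rigorous. Once that geometric compatibility is established, the rest is bookkeeping involving the self-similar scaling of $\mu_0$.
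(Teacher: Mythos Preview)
Your proposal is correct and follows essentially the same route as the paper: both construct the test function by gluing the patches $f_n^{\psi}$ of \eqref{e:5.10} over $n$-cells $Q\in\mathcal{Q}_n(F_0^{n+m})$, with $\psi$ read off from the optimal diagonal potential $u_m^D$, and then invoke \eqref{e:5.11} together with the scaling $\tmcE_0|_Q=\rho_{w(Q)}\tmcE_0|_{F_0^n}$ to bound the energy by $2R_n^{-1}\mathcal{E}_{D_m}(u_m^D)$. The only difference is that the paper records ``It is direct to check that $f=1$ on $L_2$, $f=0$ on $L_4$, so $f$ is feasible'' without further comment, whereas you supply the continuity verification across shared edges (via $v_i\equiv 0$ on $L_i$ and the antisymmetry $u_n(s,0)+u_n(3^n-s,0)=1$); this is a genuine and correct addition, though note your ``$1-s$'' should read ``$3^n-s$''.
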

\begin{proof}
Let $u_m^D$ be the optimum potential for $R_m^D$.
Define ${\psi}:3^n D_m\rightarrow \R$ by 
\[
{\psi}(3^n a)=u_m^D(a)\quad \hbox{ for } a\in  D_m.
\]
For $Q\in \mathcal{Q}_{n}(F_0^{n+m})$, let $\psi_Q$ be the restriction of $\psi$  on $Q\cap 3^nD_m,$ 
and define
\[ 
f=\sum_{Q\in \mathcal{Q}_{n}(F_0^{n+m})}f_n^{\psi_Q\circ\Phi_{n,Q}}\circ\Phi_{n,Q}^{-1}.
\]
It is direct to check that $f=1\hbox{ on } L_2,\ f=0\hbox{ on }L_4$, so $f$ is feasible in the sense of  \eqref{e:5.1}. Then using \eqref{e:5.11}, we have
\begin{align*}
R_{n+m}^{-1}&\leq \tmcE{_0}|_{F_0^{n+m}}(f)=\sum_{Q\in \mathcal{Q}_{n}(F_0^{n+m})}\tmcE{_0}|_{Q\cap F_0^{n+m}}(f_n^{\psi_Q\circ\Phi_{n,Q}}\circ\Phi_{n,Q}^{-1})\\
&\leq 2R_n^{-1}\sum_{Q\in \mathcal{Q}_{0}(F_0^{m})} \mcE_{D_m}|_{Q\cap D_m}(u_m^D)\\
&=2R_n^{-1}(R_m^D)^{-1}.
\end{align*}
\end{proof}
Finally, we turn to prove Theorem \ref{Theorem 5.1.}.

\begin{proof}[Proof of Theorem \ref{Theorem 5.1.}]
By Lemmas \ref{lemma5.7}, \ref{lemma5.8} and \ref{lemma5.10}, there exist a positive constant $c$ such that 
\begin{equation*}
    c^{-1}R_mR_n\leq R_{n+m}\leq cR_{m}R_n \hbox{ for } n,m\geq0.
\end{equation*}
Then $\log(c^{-1}R_n)$ is superadditive, $\log(cR_n)$ is subadditive. By Fekete lemma, 
\begin{align*}
&\lim_{n\rightarrow\infty} \frac{\log(cR_n)}{n}=\inf_{n\geq0}  \frac{\log(cR_n)}{n},\\
&\lim_{n\rightarrow\infty} \frac{\log(c^{-1}R_n)}{n}=\sup_{n\geq0}  \frac{\log(c^{-1}R_n)}{n},
\end{align*}
which yield Theorem \ref{Theorem 5.1.} by setting $\lambda=\exp(\lim\limits_{n\rightarrow\infty}\frac{\log R_n}{n})$.
\end{proof}

\section{Scaling limits}\label{section6}
Let $(\tmcE_0,\tmcF_0)$ be the Dirichlet form on $\tilde{F}_0$. Let $\Omega$ be an open set in $\tilde{F}_0$ and $A$ be a pre-compact set in $\Omega$. Define $\res(A,\Omega)$ by 
\begin{equation*}
\hbox{res}(A,\Omega):=(\inf\{\tmcE_0(f):\ f=1\ \hbox{ on $A$},\ f\in C_{c}(\Omega)\cap\tmcF_0 \})^{-1}.    
\end{equation*}
It follows from the definition that $\res(A,\Omega)$ is decreasing in $A$ and increasing in $\Omega$.

 For simplicity, in this section, we consider the infinity distance between points in $\R^2$ from time to time, 
\[
\|x-y\|_\infty:=\max\{|x_1-y_1|,|x_2-y_2|\}\quad\hbox{ for }x=(x_1,x_2)\hbox{ and }y=(y_1,y_2).
\]
Note that $\|x-y\|_\infty\asymp |x-y|$, where $|x-y|$ is the usual Euclidean distance between $x\hbox{ and }y$. For $x\in\tilde{F}_0$ and $r>0$, we denote 
\begin{align*}
  & B_\infty(x,r):=\{y\in \tilde{F}_0:\|y-x\|_\infty<r\},\\
  & B(x,r):=\{y\in \tilde{F}_0:|y-x|<r\}.
\end{align*}

\begin{lem}\label{lemma6.1}
For each $k\geq 1$, there exists a constant $C(k)>1$ such that
\begin{equation*}
\begin{aligned}
    C(k)^{-1} \frac{R_n\,{\mu_0(F^n_0)}}{\mu_0(B_\infty(x_0,3^n))}\leq \res({B_\infty(x_0,k3^n)},B_\infty(x_0,(k+1)3^n))\leq C(k) \frac{R_n\,{\mu_0(F^n_0)}}{\mu_0(B_\infty(x_0,3^n))}
\end{aligned}
\end{equation*}
for every $n\in \Z$ and $x_0=(i3^{n},j3^{n})$ with $i,j\in \mathbb{Z},\ i,j\geq0$. 
\end{lem}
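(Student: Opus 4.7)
The strategy is to treat the annulus
\[
A_n := B_\infty(x_0,(k+1)3^n)\setminus B_\infty(x_0,k3^n)
\]
as a ring of scale-$n$ cells carrying a parallel combination of ``cellular'' resistances. Since $x_0$ lies on the scale-$n$ grid, $A_n$ is exactly one scale-$n$ cell wide and consists of a disjoint union of $N\asymp 8k$ cells $Q_1,\ldots,Q_N\in\mathcal{Q}_n(\tilde F_0)$, each with a distinguished ``inner'' side on $\partial B_\infty(x_0,k3^n)$ and an ``outer'' side on $\partial B_\infty(x_0,(k+1)3^n)$.

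\emph{Cellular scaling identity.} Every cell $Q_j\in\mathcal{Q}_n(\tilde F_0)$ is a pure translate of $F_0^n$, and the multiplicative factorization $\rho_{uv}=\rho_u\rho_v$ yields the exact identity
\[
\mu_0|_{Q_j}\;=\;\frac{\mu_0(Q_j)}{\mu_0(F_0^n)}\cdot (T_{Q_j})_*\,\mu_0|_{F_0^n},
\]
where $T_{Q_j}$ is the translation from $F_0^n$ onto $Q_j$. Consequently $\tmcE_0|_{Q_j}$ is the corresponding scalar multiple of the pullback of $\tmcE_0|_{F_0^n}$, and the resistance $R_n(Q_j)$ between two opposite sides of $Q_j$ satisfies
\[
R_n(Q_j)\;=\;R_n\cdot\frac{\mu_0(F_0^n)}{\mu_0(Q_j)}.
\]

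\emph{Upper bound on $\res$.} For any admissible $f$, its restriction to each $Q_j$ equals $1$ on the inner side and $0$ on the outer side, so by the definition of $R_n(Q_j)$,
\[
\int_{Q_j}|\nabla f|^2\,d\mu_0\;\geq\;R_n(Q_j)^{-1}\;=\;\frac{\mu_0(Q_j)}{R_n\mu_0(F_0^n)}.
\]
Summing over $j$ yields $\tmcE_0(f)\geq \mu_0(A_n)/(R_n\mu_0(F_0^n))$. Combining this with the volume comparison $\mu_0(A_n)\geq c(k)\mu_0(B_\infty(x_0,3^n))$ (which follows from the multiplicativity of the weights $\rho_w$ via a direct weight count in concentric balls) gives the upper bound $\res\leq C(k)\,R_n\mu_0(F_0^n)/\mu_0(B_\infty(x_0,3^n))$.

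\emph{Lower bound on $\res$.} Construct a test function $f$ by setting $f\equiv 1$ on $B_\infty(x_0,k3^n)$, $f\equiv 0$ outside $B_\infty(x_0,(k+1)3^n)$, and on each $Q_j$ equal to the appropriate translate/rotation of the minimizer $u_n$ of the $R_n$-problem. The full dihedral symmetry of the weights ($\rho_{2i-1}=1$, $\rho_{2i}=\rho$) makes $\mu_0|_{F_0^n}$ invariant under the reflections and $\pi/2$-rotations of $F_0$, so $u_n$ inherits the reflection symmetry that ensures its traces on opposite lateral sides agree. Consequently the cellular minimizers match continuously along the four straight portions of the ring by translation. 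At the four corner cells one uses instead the minimizer of the ``L-shaped'' boundary problem (with two inner sides set to $1$ and two outer sides set to $0$), which by diagonal symmetry matches the adjacent side cells up to a universal corner correction. Thus $f\in\tmcF_0$ with
\[
\tmcE_0(f)\;\lesssim\;\sum_{j=1}^N R_n(Q_j)^{-1}\;=\;\frac{\mu_0(A_n)}{R_n\mu_0(F_0^n)}\;\leq\;C(k)\frac{\mu_0(B_\infty(x_0,3^n))}{R_n\mu_0(F_0^n)},
\]
giving $\res\geq C(k)^{-1}R_n\mu_0(F_0^n)/\mu_0(B_\infty(x_0,3^n))$.

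\emph{Main obstacle.} The most delicate step is producing a globally continuous admissible test function for the lower bound, in particular at the four corner cells of the ring where the orientation of inner/outer sides changes. The cellular scaling identity and the doubling-type volume comparison $\mu_0(A_n)\asymp C(k)\mu_0(B_\infty(x_0,3^n))$ are both direct consequences of the pure-translation action of the cellular isometries and the multiplicative structure of $\rho_w$, and should not present essential difficulty. The case $n\leq 0$ is covered separately and is essentially a trivial Euclidean calculation.
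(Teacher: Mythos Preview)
Your overall strategy---decompose the annulus into scale-$n$ cells, use the multiplicative scaling identity $\mu_0|_{Q_j}=\frac{\mu_0(Q_j)}{\mu_0(F_0^n)}(T_{Q_j})_*\mu_0|_{F_0^n}$ to reduce each cell to the reference problem, and combine---is exactly the paper's approach. The scaling identity and the volume comparison $\mu_0(A_n)\asymp_k \mu_0(B_\infty(x_0,3^n))$ are correct; for the upper bound on $\res$ the paper in fact uses only a single non-corner cell $S\subset\tilde F_0$ (whose existence follows from connectedness of $\tilde F_0$) rather than summing over all cells, but your version is also valid once you exclude the four corner cells from the sum, since those do \emph{not} have a full inner side on $\partial B_\infty(x_0,k3^n)$---they touch $\overline{B_1}$ only at a vertex.

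The genuine gap is in your test-function construction for the lower bound on $\res$, precisely at the point you flag. Your proposed ``$L$-shaped minimizer with two inner sides set to $1$ and two outer sides set to $0$'' does \emph{not} glue continuously to the side cells: on a side cell the translate of $u_n$ takes every value in $[0,1]$ along the edge shared with the corner cell, not the constant value $1$. The resulting global function would have a jump across that edge and hence fail to lie in $\tmcF_0$, so the energy bound is vacuous. Invoking an unspecified ``universal corner correction'' does not repair this, because the discontinuity occurs along a set of positive $\mathcal H^1$-measure.

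The paper's device here is short and worth noting. Let $v,w$ be rotations of $u_n$ with $v=1$ on $L_4$, $v=0$ on $L_2$, $w=1$ on $L_1$, $w=0$ on $L_3$. On each corner cell place $v\wedge w$. By the reflection symmetries of $\mu_0|_{F_0^n}$ one has $v|_{L_1}=v|_{L_3}$ and $w|_{L_2}=w|_{L_4}$, and a direct check shows that $v\wedge w$ has exactly the right traces to match the adjacent side cells (on which $v$ or $w$ sits) and vanishes on the two outer sides. The energy at the corner is controlled by the Markov property of the Dirichlet form:
\[
\tmcE_0|_{F_0^n}(v\wedge w)\leq \tmcE_0|_{F_0^n}(v)+\tmcE_0|_{F_0^n}(w)=2R_n^{-1}.
\]
With this in place your remaining estimates go through unchanged. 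Equivalently, if you insist on using a minimizer at the corner, the correct boundary data are ``equal to the trace of the adjacent side-cell function on the two interior-facing edges, and $0$ on the two outer edges''; then $v\wedge w$ is a competitor and gives the same energy bound.
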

\begin{proof}
Let $B_1=B_\infty(x_0,k3^n)$ and $B_2=B_\infty(x_0,(k+1)3^n)$. We prove the lower bound estimate of $\res(B_1,B_2)$ by constructing a function $f$ feasible for $\res({B_1},B_2)$. Let $S_1,...,S_{8k+4}$ be the squares in
\[
\mathcal{Q}_{n}([(i-k-1)3^n,(i+k+1)3^n]\times [(j-k-1)3^n,(j+k+1)3^n]\setminus \overline{B_1}).
\]
We label counterclockwise $S_l,\ l=1,...,8k+4$ starting with 
\[
S_1=[(i+k)3^n,(i+k+1)3^n]\times[j3^n,(j+1)3^n].
\] 
For $l\in\{1,...,2k+1\}$, we write $\varphi_l$ for the translation that maps $[0,3^n]^2$ to $S_l$. Let $u$ be the optimum function for \eqref{e:5.1}, and rotate $u$ to get $v,w$ such that 
\begin{equation}
\begin{aligned}\label{1}
  &v=1\hbox{ on }L_4,\quad\ v=0\hbox{ on }L_2,\quad\, \tmcE_0|_{F_0^n}(v)=1/R_n,\\
&w=1\hbox{ on }L_1,\quad w=0\hbox{ on }L_3,\quad \tmcE_0|_{F_0^n}(w)=1/R_n.   
\end{aligned}
\end{equation}
Note that $v\wedge w=\frac{1}{2}(v+w-|v-w|)$, by the Markov property of $\tmcE_0$, 
\begin{align}\label{2}
\tmcE_0(v\wedge w)\leq \tmcE_0(v)+\tmcE_0(w)=2/R_n. 
\end{align}
For $1\leq l\leq 2k+1$, we set $f_l:S_l\cap \tilde{F}_0\rightarrow \R$ by 
\[f_l=\begin{cases}
v\circ \varphi_l^{-1},&\qquad\hbox{ if  }l=1,...,k,\\
(v\wedge w)\circ \varphi_l^{-1}, &\qquad\hbox{ if  }l=k+1,\\
w\circ \varphi_l^{-1},&\qquad\hbox{ if  }l=k+2,...2k+1.
\end{cases} 
\]
For $l=2k+2,...8k+4$, we choose a reflection $g$ that maps $[(i-k-1)3^n,(i+k+1)3^n]\times [(j-k-1)3^n,(j+k+1)3^n]$ onto itself such that $g(S_{l})=S_{l^{\prime}}$ for some $l^{\prime}\in \{1,...,2k+1\}$, and set $f_l= f_l^{\prime}\circ g$ on $S_l\cap\tilde{F}_0$. Define $f:{ \tilde{F}_0}\rightarrow \R$ as 
\[
f=\begin{cases}
    f_l \qquad  &\hbox{on }S_l,\ \hbox{if }S_l\subset\tilde{F}_0,\\
    1 &\hbox{on }\overline{B_1},\\
    0 &\hbox{otherwise}.
\end{cases}
\]
Let $c_k=\max\{\frac{\mu_0(S_l\cap\tilde{F}_0)}{\mu_0(B_\infty(x_0,3^n))}:l=1,...8k+4\}$. 
 Then
\begin{align*}
\tmcE_0(f)&=\sum_{l=1}^{8k+4}\tmcE_0|_{S_l\cap \tilde{F}_0}(f)\\ &\leq\sum_{l= k+1,3k+2\atop  5k+3,7k+4}2\frac{\mu_0(S_l\cap\tilde{F}_0)}{\mu_0(F^n_0)}R^{-1}_n+\sum_{\substack{1\leq l\leq 8k+4, \\l\neq  k+1,3k+2\\
 5k+3,7k+4}}\frac{\mu_0(S_l\cap\tilde{F}_0)}{\mu_0(F^n_0)}R^{-1}_n\\
 &\leq (8k+8)c_k\frac{\mu_0(B_\infty(x_0,3^n))}{\mu_0(F_0^n)}R^{-1}_n,
\end{align*}
where the first inequality follows from \eqref{1} and \eqref{2}. This implies the desired lower bound of $\res(B_1,B_2)$.

Conversely, by the connectedness of $\tilde{F}_0$, we can pick a square among $\{S_1,S_2,\cdots,S_{8k+4}\}\setminus\{S_{k+1},S_{3k+2},S_{5k+3},S_{7k+4}\}$, say $S$, such that $S\subset \tilde{F}_0$. Then for any $h$ feasible for $\res(B_1,B_2)$, 
\begin{equation*}
\begin{aligned}
\tmcE_0(h)\geq \tmcE_0|_{S\cap\tilde{F}_0}(h|_{S})\geq c_k^{\prime} \frac{\mu_0(B_\infty(x_0,3^n))}{\mu_0(F_0^n)R_n},
\end{aligned}
\end{equation*}
where $c_k^{\prime}=\inf\{\frac{\mu_0(S_l\cap { \tilde{F}_0})}{\mu_0(B_\infty(x_0,3^n))}:l=1,...8k+4{,\ S_l\subset \tilde{F}_0}\}$. The upper bound estimate follows.
\end{proof}

Recall that $\mu_0(F_0^n)=(4+4\rho)^n$  and $R_n\asymp \lambda^n$ by Theorem \ref{Theorem 5.1.}, for $n\geq 1$. We let 
\[
\beta=\frac{\log(4+4\rho)}{\log3}+\frac{\log \lambda}{\log 3}\quad\hbox{ and }\quad \Psi(r)=
\begin{cases} 
	r^\beta, & \text{if }r\geq1,\\  
	r^2, & \text{if } 0<r\leq 1.
\end{cases}
\]

\begin{lem}\label{lemma6.2}
There exists a constant $C>1$ such that
\[
C^{-1}\Psi(r)/\mu_0(B_\infty(x,r))\leq \res(B_\infty(x,r),B_\infty(x,4r))\leq C \Psi(r)/\mu_0(B_\infty(x,r))
\]
for every $x\in \tilde{F}_0$ and $r>0$. 
\end{lem}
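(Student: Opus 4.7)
The plan is to reduce Lemma~\ref{lemma6.2} to Lemma~\ref{lemma6.1} by a standard sandwiching argument. Given $x\in\tilde{F}_0$ and $r>0$, I would choose $n\in\mathbb{Z}$ with $3^n\leq r<3^{n+1}$ and a lattice point $x_0=(i\,3^n,j\,3^n)$ with $i,j\in\mathbb{Z}_{\geq 0}$ closest to $x$, so $\|x-x_0\|_\infty\leq 3^n/2$. The $\ell^\infty$-triangle inequality then produces absolute positive integer constants $a<b$ (independent of $x,r,n$) with $B_\infty(x_0,a\cdot 3^n)\subset B_\infty(x,r)\subset B_\infty(x,4r)\subset B_\infty(x_0,b\cdot 3^n)$, together with the reverse sandwich. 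Monotonicity of $\res$ in each argument then reduces the problem to estimating lattice-centered resistances $\res(B_\infty(x_0,p\cdot 3^n),B_\infty(x_0,q\cdot 3^n))$ for bounded integers $p<q$.

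The sandwiched quantities are estimated using Lemma~\ref{lemma6.1}. For the lower bound, monotonicity in the outer radius reduces to a single thin annulus $B_\infty(x_0,p\cdot 3^n)\to B_\infty(x_0,(p+1)\cdot 3^n)$ contained inside, giving $\res\gtrsim R_n\mu_0(F_0^n)/\mu_0(B_\infty(x_0,3^n))$ directly. For the upper bound, I would apply the series inequality
\[
\res(B_\infty(x_0,p\cdot 3^n),B_\infty(x_0,q\cdot 3^n))\leq\sum_{k=p}^{q-1}\res(B_\infty(x_0,k\cdot 3^n),B_\infty(x_0,(k+1)\cdot 3^n)),
\]
a consequence of Dirichlet's principle (cutting along shell boundaries increases resistance), combined with Lemma~\ref{lemma6.1} applied to each of the $q-p$ thin annuli. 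Since $q-p$ is absolute, the sum matches the lower bound up to a constant factor.

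The identification with $\Psi(r)/\mu_0(B_\infty(x,r))$ then follows from Theorem~\ref{Theorem 5.1.} ($R_n\asymp\lambda^n$ for $n\geq 0$, $R_n=1$ for $n\leq 0$), the measure formulas $\mu_0(F_0^n)=(4+4\rho)^n$ (respectively $9^n$ for $n\leq 0$), and the defining identity $3^\beta=\lambda(4+4\rho)$ giving $R_n\mu_0(F_0^n)\asymp 3^{n\beta}\asymp r^\beta=\Psi(r)$ for $r\geq 1$, with the parallel scaling $9^n\asymp r^2=\Psi(r)$ for $r\leq 1$. Volume doubling of $\mu_0$ (with constants depending only on $\rho$) yields $\mu_0(B_\infty(x_0,3^n))\asymp\mu_0(B_\infty(x,r))$, completing the estimate.

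The main obstacle is the series-law upper bound: producing an explicit test function $f$ that is $1$ on $B_\infty(x_0,p\cdot 3^n)$ and compactly supported in $B_\infty(x_0,q\cdot 3^n)$ assembled from the equilibrium potentials of the thin annuli from Lemma~\ref{lemma6.1}, with energy controlled by the corresponding sum. A secondary technical point is the degenerate regime when $r$ is close to $3^n$ (so no ball of radius $3^n$ around the nearest lattice point fits inside $B_\infty(x,r)$); this is handled by passing to the finer lattice scale $3^{n-1}$ and tracking the scaling factor $\lambda$ that arises from $R_{n-1}/R_n\asymp\lambda^{-1}$ and the corresponding $\mu_0$-scaling by $(4+4\rho)$.
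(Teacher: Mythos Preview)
Your proposal is correct and follows essentially the same approach as the paper: sandwich $B_\infty(x,r)$ and $B_\infty(x,4r)$ between lattice-centered $\ell^\infty$-balls of radius $k\cdot 3^n$, apply monotonicity of $\res$, invoke Lemma~\ref{lemma6.1} for single-step annuli, and chain these with the series inequality for the upper bound. Two minor points of comparison: the paper picks $n$ so that $3^{n+1}<r\leq 3^{n+2}$, which automatically guarantees $B_\infty(x_0,3^n)\subset B_\infty(x,r)$ and removes the ``degenerate regime'' you flag at the end; and the paper simply cites the series-law inequality $\res(A_1,A_3)\geq\res(A_1,A_2)+\res(A_2,A_3)$ from \cite[Lemma~2.5]{grigor2004eigenvalues} rather than building the test function by hand.
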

\begin{proof}
Choose $n\in\Z$ such that $3^{n+1}<r\leq 3^{n+2}$, and let $x_0$ be the center of $D_n(x)$. Note that $\|x-x_0\|_\infty\leq 3^n/2$, so  
\begin{equation*}
    B_\infty(x_0,3^n)\subset B_\infty(x,r)\subset B_\infty(x_0,10\cdot 3^{n})\subset  B_\infty(x_0,11\cdot 3^{n})\subset B_\infty (x,4r)\subset B_\infty(x_0,37\cdot 3^n).
\end{equation*}
By the monotonicity of $\res$, we have 
\begin{align*}
\res({B_\infty(x_0,10\cdot3^{n})},B_\infty(x_0,11\cdot3^{n}))&\leq\res({B_{\infty}(x,r)},B_{\infty}(x,4r))\\&\leq \res({B_{\infty}(x_0,3^n)},B_\infty(x_0,37\cdot3^n)).
\end{align*}
For the lower bound estimate, by Lemma \ref{lemma6.1} and Theorem \ref{Theorem 5.1.}, 
\begin{equation*}
    \begin{aligned}
        \res({B_\infty(x_0,10\cdot3^{n})},B_\infty(x_0,11\cdot3^{n}))&\gtrsim  \Psi(3^n)/\mu_0(B_\infty(x_0,3^n))\\
 &\gtrsim  \Psi(3^{n+2})/\mu_0(B_\infty(x_0,3^{n})) \\
 &\geq \Psi(r)/\mu_0(B_\infty(x,r)).       
    \end{aligned}
\end{equation*}
For the upper bound estimate, we observe that 
\begin{equation}\label{e:6.5}
\res(A_1,A_2)+\res(A_2,A_3)\leq \res(A_1,A_3)\quad\hbox{ for }A_1\subset A_2\subset A_3\subset \tilde{F}_0
\end{equation}
by \cite[Lemma 2.5]{grigor2004eigenvalues}, so   
\begin{align*}
\res({B_\infty(x,r)},B_\infty(x,4r))&\leq \res({B_{\infty}(x_0,3^n)},B_\infty(x_0,37\cdot3^n)\\
&\leq \sum_{i=1}^{36}\res({B_\infty(x_0,i3^n)},B_{\infty}(x_0,(i+1) 3^{n}))\\
&\lesssim\sum_{i=1}^{36} \Psi(3^n)/\mu_0(B_{\infty}(x_0,3^n))\\
&\lesssim \sum_{i=1}^{36} \Psi(r)/\mu_0(B_\infty(x_0,r))\\
&\lesssim \Psi(r)/\mu_0(B_\infty(x_0,r)).
\end{align*}
\end{proof}

By Theorem \ref{theorem4.1}, Lemma \ref{lemma6.2} and \cite[Theorem 3.14]{grigor2014heat}, the transition density $q_t(x,y)$ of the reflected Brownian motion $W$ on $\tilde{F}_0$ exists, and has a jointly H\"older continuous version. Moreover, there exist constants $C_1,C_2,C_3,C_4,C_5>0$ and $0<C_6<1$ such that:\smallskip 

\noindent$(a)$. for $t\geq 1$  and $x,y\in\tilde{F}_0$ such that $|x-y|\leq t$,
\begin{equation}\label{e:6.6}
 \begin{aligned}
 C_1 \frac{1}{\mu_0(B(x,t^{1/\beta}))}&\exp(-C_2 (\frac{|x-y|^\beta}{t})^{\frac1{\beta-1}})\\
 &\leq q_t(x,y)
 \leq C_3 \frac{1}{\mu_0(B(x,t^{1/\beta}))}\exp(-C_4 (\frac{|x-y|^\beta}{t})^{\frac1{\beta-1}})),     
 \end{aligned}   
\end{equation}

\noindent$(b)$. for $x\in\tilde{F}_0$ and $r>0$,
\begin{equation}\label{e:6.7}
    C_5^{-1}\Psi(r)\leq E^y[\tau_{B(x,r)}]\leq C_5\Psi(r)\quad\hbox{ for }y\in B(x,C_6r).\medskip 
\end{equation}

For $n\geq 0$, we define a process $X^{(n)}$ on $\tilde{F}_n$ as 
\[
X_t^{(n)}=3^{-n}W_{3^{n\beta}t}\quad\hbox{ for } t\geq 0.
\]
Write $P_n^x $ for the probability law of $X^{(n)}$ started at $x$. As an immediate consequence of \eqref{e:6.7}, 
\begin{equation}\label{e:6.4}
C_5^{-1}r^\beta\leq E_n^y[\tau_{B(x,r)}]\leq C_5r^\beta\quad\hbox{ for }x\in\tilde{F}_n,\,y\in B(x,C_6r)\cap\tilde{F}_n\hbox{ and }r>3^{-n}.
\end{equation}
Let $q_t^{(n)}$ be the transition density of $X^{(n)}$ with respect to $\mu_n$. Then, for $t>0$ and $x\in\tilde{F}_n$,  
\begin{equation*}
	\begin{aligned}
		P_n^x(X_t^{(n)}\in A)&=P^{3^nx}(3^{-n}W_{3^{n\beta}t}\in A)\\&=\int_{3^nA}q_{3^{n\beta}t}(3^nx,y)\mu_0(dy)=({ 4\rho+4})^n\int_Aq_{3^{n\beta}t}(3^nx,3^ny)\mu_0(dy).
	\end{aligned}
\end{equation*}
Hence, 
\begin{equation*}
q_t^{(n)}(x,y)=({ 4\rho+4})^n q_{3^{n\beta}t}(3^nx,3^ny).
\end{equation*}
The above observation, together with \eqref{e:6.6} and the fact that $\mu_n(B(x,r))=(4\rho+4)^{-n}\mu_0(B(3^nx,3^nr))$, implies
\begin{equation}\label{e:6.11}
 \begin{aligned}
 C_1 \frac{1}{\mu_n(B(x,t^{1/\beta}))}&\exp(-C_2 (\frac{|x-y|^\beta}{t})^{\frac1{\beta-1}})\\
 &\leq q_t^{(n)}(x,y)
 \leq C_3 \frac{1}{\mu_n(B(x,t^{1/\beta}))}\exp(-C_4 (\frac{|x-y|^\beta}{t})^{\frac1{\beta-1}}))
 \end{aligned}  
\end{equation}
for $t>0$ and $x,y\in\tilde{F}_n$ such that $3^{n\beta}t\geq (1\vee 3^n|x-y|)$.

Finally, we construct a limit process $X_t$ as the subsequential limit of $X^{(n)}_t$.  \medskip 

\begin{thm}
	There is a Feller process $((X_t)_{t>0},(P^x)_{x\in \tilde{F}})$ on $\tilde{F}$ such that there exists a transition density $p_t(x,y)$ satisfying the two-sided sub-Gaussian heat kernel estimate 
	\begin{equation}\label{e:6.22}
	\begin{split} 
		C_1\frac{1}{\mu(B(x,t^{1/\beta}))}\exp(-C_2(\frac{|x-y|^{\beta}}{t})^{\frac{1}{\beta-1}})&\leq p_t(x,y)\\&\leq C_3\frac{1}{\mu(B(x,t^{1/\beta}))}\exp(-C_4(\frac{|x-y|^{\beta}}{t})^{\frac{1}{\beta-1}})
	\end{split} 
	\end{equation}
	for $x,y\in\tilde{F}$ and $t>0$. Here, $C_1$--$C_4$ are the same constants in \eqref{e:6.6}. 
	\begin{proof}
		The proof is essentially the same as the construction of Barlow and Bass \cite{barlow1989construction,BB19993d}. We briefly review the key steps here.
		
		First, by \eqref{e:6.4} and the same proof of \cite[Proposition 5.5(c)]{BB19993d},  
		\[
		P_n^x(\sup_{0<s<t}|X^{(n)}_t-x|\geq r)\leq C_7\exp(-C_8(\frac{r^{\beta}}{t})^{\frac{1}{\beta-1}})\quad\hbox{ for } x\in\tilde{F}_n\hbox{ and }r\geq3^{-n},
		\]
for some constants $C_7,C_8>0$, and hence the law of $P_n^{x_n},n\geq1$ is tight for a bounded sequence $x_n\in\tilde{F}_n$ by the Aldous criterion for tightness  (see \cite[Theorem 16.10 and 16.11]{Kallenberg} for example). 
		
		Next, by the elliptic Harnack inequality (Theorem \ref{theorem4.1}), the mean exit time estimate \eqref{e:6.7} and the same proof of \cite[Section 3.1]{BBKT}, we have 
		$U_\lambda^{(n)}f\hbox{ is equicontinuous for }f\in C_c(\tilde{F}_n)$,
		where $U_\lambda^{(n)}$ is the resolvent operator associated with $X^{(n)}$, i.e. 
		\[
		U_\lambda^{(n)}f(x)=E_n^x[\int_0^{\infty}e^{-\lambda t}f(X_t^{(n)})dt].
		\]
		Hence, by a diagonal argument, there is a subsequence $n_k$ so that $U_\lambda^{(n_k)}f$ converges uniformly on $\tilde{F}$ for every $f\in C_c(\tilde{F}_0)$. 
		
		Finally, by the same argument of \cite[Section 6]{barlow1989construction}, there is a $\mu$-symmetric Feller process $X$ on $\tilde{F}$ such that $P^{x_k}_{n_k}((X^{(n_k)}_t)_{t\geq 0}\in \bullet)$ weakly converges to $P^x((X_t)_{t\geq 0}\in \bullet)$ for every $x_k\in \tilde{F}_{n_k}$ and $x\in\tilde{F}$ such that $x_k\to x$. The transition density estimate \eqref{e:6.22}  follows from \eqref{e:6.11} by taking the limit. \end{proof}
	\end{thm}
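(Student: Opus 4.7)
The plan is to construct $X$ as a subsequential weak limit of the rescaled approximating processes $X^{(n)}$ on $\tilde{F}_n$, then read off the heat kernel estimate from the estimates \eqref{e:6.11} already available at the approximating level. This is the Barlow--Bass scheme, so the outline closely follows \cite{barlow1989construction,BB19993d}, and the nontrivial inputs (Harnack inequality, resistance estimates, exit time bounds) are by now all in place.

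First I would establish tightness. Starting from the mean exit time estimate \eqref{e:6.4} uniformly in $n$, a standard chaining argument (decompose the exit from $B(x,r)$ into $n$ successive exits from balls of radius $r/n$, apply the Markov property and optimize in $n$) yields a uniform sub-Gaussian tail
\[
P_n^x\!\Bigl(\sup_{0<s<t}|X^{(n)}_s-x|\geq r\Bigr)\leq C_7\exp\!\Bigl(-C_8(r^\beta/t)^{1/(\beta-1)}\Bigr)
\]
for $r\geq 3^{-n}$. This is exactly \cite[Proposition 5.5(c)]{BB19993d}. Together with the Aldous criterion, it gives tightness of $\{P_n^{x_n}\}$ in the Skorokhod space for any bounded sequence $x_n\in\tilde{F}_n$.

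Second, I would upgrade to equicontinuity of the resolvents $U_\lambda^{(n)}f$ for $f\in C_c(\tilde{F})$. The input is the elliptic Harnack inequality (Theorem \ref{theorem4.1}), which scales verbatim to each $X^{(n)}$, combined with the two-sided mean exit time bound \eqref{e:6.4}. By the oscillation argument of \cite[Section~3.1]{BBKT}, these two ingredients give a H\"older modulus of continuity for $U_\lambda^{(n)}f$ that is uniform in $n$. A diagonal extraction over a countable dense subset of $C_c(\tilde{F})$ and a countable dense set of $\lambda$'s then produces a subsequence $n_k$ along which $U_\lambda^{(n_k)}f$ converges uniformly on $\tilde{F}$ to some $U_\lambda f\in C_0(\tilde{F})$. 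The Feller resolvent properties (positivity, contractivity, resolvent equation, $\lambda U_\lambda f\to f$) all pass to the limit, and Hille--Yosida produces a Feller semigroup $(P_t)$ and hence a Feller process $X$ on $\tilde{F}$. Tightness plus resolvent convergence then upgrades to weak convergence $P_{n_k}^{x_k}\Rightarrow P^x$ whenever $x_k\to x$, as in \cite[Section~6]{barlow1989construction}; symmetry of $X$ with respect to $\mu$ is inherited from the $\mu_n$-symmetry of $X^{(n)}$ via vague convergence $\mu_n\to\mu$.

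Finally, the heat kernel estimate is obtained by passing to the limit in \eqref{e:6.11}. The joint H\"older continuity of $q_t^{(n)}$, uniform in $n$ on compact subsets of $(0,\infty)\times\tilde{F}\times\tilde{F}$ (a consequence of the parabolic Harnack inequality implied by Theorem \ref{theorem4.1} together with Lemma \ref{lemma6.2} and \cite[Theorem~3.14]{grigor2014heat}), yields a locally uniformly convergent subsequence whose limit is a jointly continuous transition density $p_t(x,y)$ for $X$. Since for each fixed $(t,x,y)$ with $t>0$ and $x\neq y$ the admissibility condition $3^{n\beta}t\geq 1\vee 3^n|x-y|$ in \eqref{e:6.11} holds for all sufficiently large $n$, the two-sided bound \eqref{e:6.22} passes directly to $p_t(x,y)$, using also $\mu_n\to\mu$ to replace $\mu_n(B(x,t^{1/\beta}))$ with $\mu(B(x,t^{1/\beta}))$ in the limit. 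The main obstacle I would flag is this last step: one must verify carefully that the admissible scaling windows for the approximating estimates cover each target $(t,x,y)$ as $n\to\infty$ and that the volume factor converges with matching constants, so that no factors are lost in passing to the limit.
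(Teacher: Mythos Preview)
Your proposal is correct and follows essentially the same Barlow--Bass scheme as the paper: tightness from \eqref{e:6.4} via \cite[Proposition~5.5(c)]{BB19993d} and the Aldous criterion, resolvent equicontinuity from Theorem~\ref{theorem4.1} and the exit time bounds via \cite[Section~3.1]{BBKT}, subsequential weak convergence as in \cite[Section~6]{barlow1989construction}, and passage to the limit in \eqref{e:6.11}. You supply somewhat more detail than the paper (the Hille--Yosida step, the admissibility window check, and the volume convergence $\mu_n\to\mu$), but the route is identical.
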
	
		
		 Finally, noting that the Sierpi\'nski carpet $F\setminus (F\cap (\overline{\tilde{F}\setminus F}))$ is a uniform domain, by a same proof of \cite{Lierl}, the desired heat kernel estimate \eqref{e:1.1} holds on $F$ for the reflected diffusion process by \cite[Theorem 2.8]{MR4797375}. Hence, we have proved Theorem \ref{thm1}.

\end{document}